\newcommand{\n}{\mathbb{N}}
\newcommand{\ip}{(-\pi/2,\pi/2)} 
\newcommand{\gp}{\mathscr{G}} 
\def\comment#1 {{\color{red}(Comment: #1) }}
\def\H{\bf {H}}
\def\v{\operatorname{v}}
\def\dist      {\operatorname{dist}}
\def\real     #1{{\mathbb R^{#1}}}
\def\natural  #1{{\mathbb N^{#1}}}
\def\gind {\operatorname{g}}
\newtheorem{theorem}{Theorem}[section]
\newtheorem{lemma}[theorem]{Lemma}
\newtheorem{claim}{Claim}
\newtheorem*{thma}{Theorem}
\newtheorem{definition}[theorem]{Definition}
\theoremstyle{definition}
\newtheorem{remark}[theorem]{Remark}
\def\pproof#1{\@ifnextchar[\opargproof
{\opargproof[\it Proof of #1.]}}
\def\opargproof[#1]{\par\noindent {\bf #1 }}
\numberwithin{equation}{section}
\begin{document}

\title[Translating solitons]{A characterization of\\ the grim reaper cylinder}

\author[F. Mart\'in]{\textsc{F. Mart\'in}}
\author[J. P\'erez-Garc\'ia]{\textsc{J. P\'erez-Garc\'ia}}
\author[A. Savas-Halilaj]{\textsc{A. Savas-Halilaj}}
\author[K. Smoczyk]{\textsc{K. Smoczyk}}

\address{Francisco Mart\'in \& Jes\'us P\'erez-Garc\'ia\newline
Departamento de Geometr\'ia y Topolog\'ia\newline
Instituto Espa\~{n}ol de Matem\'aticas IEMath-GR\newline
Universidad de Granada\newline
18071 Granada, Spain\newline
{\sl e-mail address of F. Mart\'in:} {\bf fmartin@ugr.es}\newline
{\sl e-mail address of J. P\'erez-Garc\'ia:} {\bf jpgarcia@ugr.es}
}

\address{Andreas Savas-Halilaj \& Knut Smoczyk\newline
Institut f\"ur Differentialgeometrie and \newline
Riemann Center for Geometry and Physics\newline
Leibniz Universit\"at Hannover\newline
Welfengarten 1\newline
30167 Hannover, Germany\newline
{\sl e-mail address of A. Savas-Halilaj:} {\bf savasha@math.uni-hannover.de}\newline
{\sl e-mail address of K. Smoczyk:} {\bf smoczyk@math.uni-hannover.de}
}

\date{}
\subjclass[2010]{Primary 53C44, 53A10, 53C21, 53C42}
\keywords{Mean curvature flow, translating solitons, grim reaper}
\thanks{F. Mart\'in and J. P\'erez-Garc\'ia are partially supported by MINECO-FEDER
grant no. MTM2014-52368. J. P\'erez-Garc\'ia is also supported by MINECO (FPI grant, BES-2012-055302)
and A. Savas-Halilaj \& K. Smoczyk by DFG SM 78/6-1.}

\begin{abstract}
In this article we prove that a connected and properly embedded translating soliton in $\real{3}$ with uniformly bounded genus on compact sets
which is $C^1$-asymptotic to two planes outside a cylinder, either is flat or coincides with the grim reaper cylinder.
\end{abstract}

\maketitle

\section{Introduction}

An oriented smooth surface \(f:M^2\to\real{3}\) is called {\it translating soliton} of the mean curvature flow (\emph{translator} for short)
if its mean curvature vector field \(\H\) satisfies the differential equation
\begin{equation*}
\H=\v^\perp,
\end{equation*}
where \(\v\in\real{3}\) is a fixed vector of unit length and \(\v^\perp\) stands for the orthogonal projection of \(\v\) to the normal bundle of the immersion \(f\).
If \(\xi\) is the outer unit normal of \(f\), then the translating property can be expressed in terms of scalar quantities as
\begin{equation}\label{eq:translating_equation_mcf}
H:=-\langle \H, \xi \rangle=-\langle \v, \xi \rangle,
\end{equation}
where \(H\) is the scalar mean curvature of \(f\).
Translators are important in the singularity theory of the mean curvature flow since they often occur as Type-II singularities.
An interesting example of a translator is the {\it canonical grim reaper cylinder} 
$\gp$ which can be represented parametrically via the embedding $u:\ip\times\real{}\to\real{3}$ given by
$$u(x_1,x_2)=(x_1,x_2,-\log\cos x_1).$$
Any translator in the direction of $\v$ which is an euclidean product of a planar
curve and $\real{}$ is either a plane containing $\v$ or
can be obtained by a suitable combination of a rotation and a dilation of the canonical
grim reaper cylinder. The latter examples will be called {\it grim reaper cylinders}.
Note that the canonical grim reaper cylinder $\gp$ is translating with respect to the direction $\v=(0,0,1).$
For simplicity we will assume that all translators to be considered here are translating in the direction
$\v=(0,0,1)$.

Before stating the main theorem let us set up the notation and provide some definitions.

\begin{definition}\label{asymp-1}
Let $\mathcal{H}$ be an open half-plane in $\real{3}$ and $\operatorname{w}$ the unit inward
pointing normal of $\partial\mathcal{H}$. For a fixed positive number $\delta$, denote by
$\mathcal{H}_{\delta}$ the set given by
$$\mathcal{H}_{\delta}:=\big\{p+t\operatorname{w}:p\in\partial\mathcal{H} \quad\text{and}\quad t>\delta\big\}.$$
\begin{enumerate}
\item [\rm (a)]
We say that a smooth surface $M$ is $C^k$-asymptotic to the open half-plane $\mathcal{H}$ if $M$
can be represented as the graph of a $C^k$-function $\varphi:\mathcal{H}\to\real{}$ such that for every
$\varepsilon>0$ there exists $\delta>0$ so that for any $j\in\{1,2,\dots,k\}$ it holds
\begin{equation*}
{\sup}_{\mathcal{H}_{\delta}}|\varphi|<\varepsilon\quad\text{and}\quad
{\sup}_{\mathcal{H}_{\delta}}|D^j\varphi|<\varepsilon.
\end{equation*}
\item [\rm (b)]
A smooth surface $M$ is called $C^k$-asymptotic outside a cylinder
to two half-planes $\mathcal{H}_1$ and $\mathcal{H}_2$ if there exists a
solid cylinder $\mathcal{C}$ such that:
\smallskip
\begin{enumerate}
\item[\rm ($b_1$)]
the solid cylinder $\mathcal{C}$ contains the boundaries of the half-planes $\mathcal{H}_1$ and $\mathcal{H}_2$,
\smallskip
\item[\rm ($b_2$)]
the set $M-\mathcal{C}$ consists of two connected components $M_1$ and $M_2$
that are $C^1$-asymptotic to $\mathcal{H}_1$ and $\mathcal{H}_2$, respectively.
\end{enumerate}
\end{enumerate}
\end{definition}

\begin{figure}[h]
\includegraphics[scale=.07]{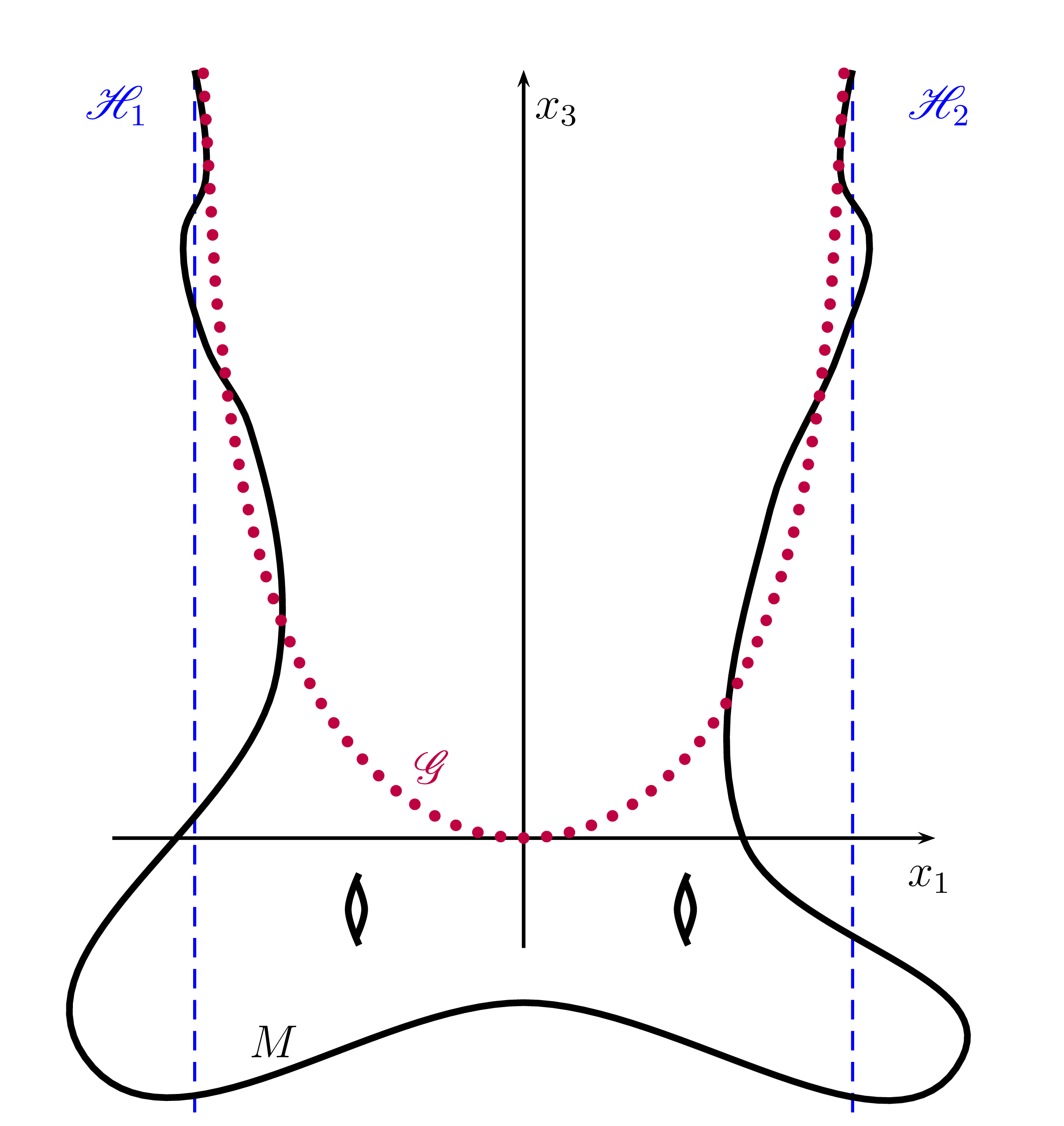}\caption{Asymptotic behavior}\label{asbehavior}
\end{figure}

For example the canonical grim reaper cylinder $\gp$ is asymptotic to the parallel half-planes
$$\mathcal{H}_{1}=\big\{(x_1,x_2,x_3)\in\real{3}:x_3>r_0>0,\,x_1=-{\pi}/{2}\big\}$$
and
$$\mathcal{H}_{2}=\big\{(x_1,x_2,x_3)\in\real{3}:x_3>r_0>0,\,x_1=+{\pi}/{2}\big\}$$
outside the solid cylinder
$${C}=\big\{(x_1,x_2,x_3)\in\real{3}:x^2_1+x^2_3\le r^2_0+{\pi^2}/{4}\big\},$$
where here $r_0$ is a positive real constant.

Let us now state our main result.

\begin{thma}
Let \(f:M^2\to\real{3}\) be a connected, properly embedded\,\footnote{Here by embedded we only mean that $M$ has no self-intersections.}
translating soliton with uniformly bounded genus on compact sets of $\real{3}$ and $\mathcal{C}$ be a solid cylinder whose axis is
perpendicular to the direction of translation of $M:=f(M^2)$. Assume that $M$ is \(C^{1}\)-asymptotic outside the cylinder $\mathcal{C}$ to
two half-planes whose boundaries belongs on $\partial \mathcal{C}$. Then either
\begin{enumerate}
\item [\rm (a)] both half-planes are contained in the same vertical plane $\Pi$ and $M=\Pi$, or
\item [\rm (b)] the half-planes are included in different parallel planes and $M$ coincides with a grim reaper cylinder.
\end{enumerate}
\end{thma}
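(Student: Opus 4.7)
\emph{Strategy and setup.} My plan is to use Alexandrov's method of moving planes, adapted to translating solitons. The translator equation $H=-\langle\v,\xi\rangle$ is invariant under reflections across vertical planes (those containing $\v=(0,0,1)$) and satisfies a strong maximum principle, so a first interior tangency between a reflected copy of $M$ and $M$ itself forces coincidence of the two surfaces. I would choose coordinates so that the axis of $\mathcal{C}$ is the $x_2$-axis and $\mathcal{C}=\{x_1^2+x_3^2\le r_0^2\}$; since each $\mathcal{H}_i$ is a vertical half-plane with horizontal boundary on $\partial\mathcal{C}$, I may write $\mathcal{H}_i\subset\{x_1=a_i\}$ with $|a_i|\le r_0$. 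Case (a) becomes $a_1=a_2=:a$, while in case (b) I assume $a_1<a_2$ without loss of generality.

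\emph{Step 1: Symmetry via moving planes.} I would slide the family $P_t=\{x_1=t\}$ from $t=+\infty$ downward, reflecting $M\cap\{x_1>t\}$ across $P_t$ to obtain $\widetilde M_t\subset\{x_1<t\}$. The $C^1$-asymptotic hypothesis controls the normals of $M$ at infinity, ensuring that $\widetilde M_t$ and $M\cap\{x_1<t\}$ remain disjoint outside a compact set as long as $t>(a_1+a_2)/2$. At the first interior tangency, the strong maximum principle for the quasilinear elliptic translator equation forces $M$ to be symmetric about the corresponding plane $P_{t^*}$, and the asymptotic behavior pins down $t^*=(a_1+a_2)/2$. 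I call this plane $P^*$.

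\emph{Step 2: Rigidity in case (b).} With the symmetry in hand, I translate $M$ along the cylinder axis: $M^s:=M+(0,s,0)$. Each $M^s$ is a translator, symmetric about $P^*$, with the same asymptotic half-planes. Using the bounded-genus hypothesis together with standard local curvature and area estimates for translators, I extract a smooth subsequential limit $M^\infty$ of $M^s$ as $s\to+\infty$. By construction $M^\infty$ is invariant under $x_2$-translations, and hence of the form $M^\infty=\gamma\times\real{}$ for a planar curve $\gamma$ in the $x_1x_3$-plane. Since $\gamma$ is a planar translator curve whose two ends are asymptotic to the upward vertical rays at $x_1=a_1$ and $x_1=a_2$, the classification of planar translator curves identifies $\gamma$ as a grim reaper curve, and hence $M^\infty$ as a grim reaper cylinder $\mathcal{G}$. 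A final maximum-principle comparison between $M$ and $\mathcal{G}$, both symmetric about $P^*$ and sharing the same asymptotic data, concludes $M=\mathcal{G}$.

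\emph{Case (a) and main obstacle.} In case (a) the symmetry plane $P^*$ must coincide with $\Pi=\{x_1=a\}$ itself. If $M\neq\Pi$, then by symmetry and properness $m:=\sup_M x_1-a$ is positive and finite; running the Alexandrov reflection once more starting from $t$ just below $a+m$ would produce a distinct symmetry plane $P_{t'}$ with $t'>a$, and since any surface symmetric across two parallel vertical planes is periodic in $x_1$, this contradicts the $C^1$-asymptotic convergence to $\Pi$. Hence $M=\Pi$. The hardest step will be the Alexandrov reflection in Step~1: because $M$ is non-compact and the asymptotic half-planes extend to infinity in $x_2$, one must carefully rule out tangencies coming from the two unbounded ends, and this is exactly where the $C^1$-asymptotic hypothesis enters. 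The bounded-genus assumption is in turn essential in Step~2 to guarantee the smooth subsequential limit via a standard translator compactness theorem.
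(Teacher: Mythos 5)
Your overall architecture (moving planes for the symmetry, $x_2$-translation limits for the asymptotic model, a final comparison) parallels the paper's, but there are two genuine gaps at the decisive points. First, in Step~2 you assert that a subsequential limit $M^\infty$ of $M+(0,s,0)$ as $s\to+\infty$ is ``by construction'' invariant under $x_2$-translations. It is not: a subsequential limit along $s_i\to\infty$ carries no translation invariance, and different subsequences may give different limits. The paper faces exactly this: the limits along the two ends are shown to be grim reaper cylinders only after a separate argument (choosing the translation parameters so that the limit's profile curve attains an interior extremum of $x_3$ and then running a sliding comparison with half-grim-reapers, Claim 4.6), and even then they are grim reapers ``possibly displayed at different heights'' --- no periodicity or invariance of $M$ itself is obtained. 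Moreover, the compactness you invoke is not free: a priori the area of $M$ in compact sets could blow up under these translations, and the paper must prove uniform local area bounds (Lemma 3.1, using White's strong barrier principle with a translating paraboloid to empty the area blow-up set) before White's compactness theorem applies. ``Standard local curvature and area estimates for translators'' do not exist in the generality you need here.

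Second, and more seriously, the ``final maximum-principle comparison between $M$ and $\mathcal{G}$'' is precisely the content of the theorem and cannot be dispatched in one sentence. Two translators asymptotic to the same two planes and symmetric about the same vertical plane are not a priori ordered, and sliding $\mathcal{G}$ vertically until first contact may produce only a ``contact at infinity'' along the $x_2$-direction; passing to a translated limit then yields a tangency between $\mathcal{G}$ and some \emph{limit} $M_\infty$, which says nothing about $M$. The paper's route around this is its most delicate step: it first proves $M$ is a global graph over the $x_1x_2$-plane (hence strictly mean convex), upgrades the asymptotics to $C^\infty$ via Ilmanen's metric, shows that $h=\xi_2H^{-1}$ (the $x_2$-component of the Gauss map divided by the mean curvature) satisfies a drift-Laplace equation and tends to $0$ at infinity along a carefully chosen compact exhaustion, and then applies Hopf's maximum principle to conclude $\xi_2\equiv 0$, i.e.\ $M$ is a Euclidean product, whence flat in one direction and a grim reaper cylinder by the classification in \cite{fra14}. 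You also omit the preliminary reductions (both wings point upward, $M$ lies in the slab, and the slab has width exactly $\pi$), without which no grim reaper cylinder shares the asymptotic data of $M$ and the moving-plane and comparison arguments cannot be set up. As written, your proof would not close.
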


\begin{remark}
Let us make here some remarks concerning our main theorem.
\begin{enumerate}[\rm (a)]
\item
Notice that in the above theorem infinite genus a priori could be possible.
The assumption that $M$ has uniformly bounded genus on compact sets of $\real{3}$ means
that for any positive $r$ there exists $m(r)$ such that for any $p\in M$ it holds
$$\operatorname{genus}\big\{M\cap \mathbb{B}_r(p)\big\}\le m(r),$$
where $\mathbb{B}_r(p)$ is the ball of radius $r$ in $\real{3}$ centered at the point $p$.
Roughly speaking, the above condition says that as we approach infinity the
``size of the  holes\rq{}\rq{} of $M$ is not becoming arbitrary small and furthermore they 
are not getting arbitrary close to each other.
\smallskip
\item
We would like to mention here that Nguyen \cite{nguyen1,nguyen2,nguyen3} constructed
examples of complete embedded translating solitons in the euclidean space $\real{3}$ with infinite genus. Outside
a cylinder, these examples look like a family of parallel half-planes. This means that the hypothesis
about the number of half-planes is sharp. Very recently,
D\'avila, Del Pino \& Nguyen \cite{davila} and, independently, Smith \cite{smith} constructed 
examples of complete embedded translators with finite non-trivial topology. For an exposition
of examples of translators see also \cite[Subsection 2.2]{fra14}.
\smallskip
\item
Ilmanen constructed a one-parameter family of complete convex translators, defined on strips,
connecting the grim reaper cylinder with the bowl soliton \cite{whi02}. Note that the level sets of these translators are closed curves.
This means that our hypothesis of being asymptotic to two planes outside a cylinder is natural and cannot be removed.
\end{enumerate}
\end{remark}

Let us describe now the general idea and the steps of the proof. As already mentioned, we will assume
that $\v=(0,0,1)$. Without loss of generality we can choose  the $x_2$-axis as the axis of rotation of $\mathcal{C}$. First we show that the half-planes must be
parallel to each other, they should be also parallel to the translating direction and that both wings of $M$ outside the cylinder must point in the direction of $\v$.
Then, after a translation in the direction of the $x_1$-axis, if necessary, we prove that the asymptotic half-planes $\mathcal{H}_1$ and $\mathcal{H}_2$ are subsets of the parallel
planes
$$\Pi(-{\pi}/{2})=\big\{(x_1,x_2,x_3)\in\real{3}:x_1=-{\pi}/{2}\big\}$$
and
$$\Pi(+{\pi}/{2})=\big\{(x_1,x_2,x_3)\in\real{3}:x_1=+{\pi}/{2}\big\},$$
respectively, and that $M$ is contained
in the slab between the planes $\Pi(-{\pi}/{2})$ and $\Pi(+{\pi}/{2})$. To prove this claim we study the
$x_1$-coordinate function of $M$ in order to control its range. By the strong maximum principle we
conclude that the $x_1$-coordinate function cannot attain local maxima or minima. To prove that
${\sup}_{M}x_1=\pi/2=-{\inf}_{M}x_1$ we perform a ``blow-down" argument
based on a compactness theorem of White \cite{whi15} for sequences of properly embedded minimal surfaces in Riemannian
$3$-manifolds. The next step is to show that $M$
is a bi-graph over $\Pi(+{\pi}/{2})$ and that the plane
$$\Pi(0)=\big\{(x_1,x_2,x_3)\in\real{3}:x_1=0\big\}$$
is a plane of symmetry for $M$. To prove this claim we use Alexandrov's method of moving planes. In the sequel we show that $M$ must be a
graph over a slab of the $x_1x_2$-plane.  Thus, $M$ must have zero genus and it must be strictly mean convex. To achieve this goal we carefully investigate the set of the local
maxima and minima of the profile curve
$$\Gamma=M\cap\Pi(0)\subset\mathcal{C}.$$
Performing again a ``blow-down" argument along the ends of the curve $\Gamma$ we deduce that $M$ looks like a grim reaper cylinder
at infinity. To finish the proof, we consider the function $\xi_2$ which measures the $x_2$-coordinate
of the Gau{\ss} map $\xi$ of $M$. Then, by applying the strong maximum principle to $\xi_2H^{-1}$, we deduce that $\xi_2$ is identically zero.
This implies that the Gau{\ss} curvature of $M$ is zero and so $M$ must coincide with a grim reaper cylinder (see \cite[Theorem B]{fra14}).

The structure of the paper is as follows. In Section \ref{sec:main_tools} we introduce the
tangency principle, the compactness and the strong barrier principle of White \cite{whi12, whi15}.
In Section 3 we present a lemma that will play a crucial role in the proof of our theorem. 
This lemma (Lemma \ref{pro:limit_process}) asserts that every complete, properly embedded translating soliton in $\real{3}$ with
the asymptotic behavior of two half-planes has a surprising amount of internal
dynamical periodicity. The main theorem is proved in Section 
\ref{sec:proof_of_the_main_theorem}.

%
%
\section{A compactness theorem and a strong barrier principle}\label{sec:main_tools}
We  will introduce here the main tools that we will use in the proofs.

\subsection{The tangency principle}
According to this maximum principle
(see \cite[Theorem 2.1]{fra14}), two different translators 
cannot ``touch'' each other at one interior or boundary point. More precisely:

\begin{theorem}\label{thm:tangency_principle}
Let $\Sigma_1$ and $\Sigma_2$ be embedded connected translators in $\real{3}$ with boundaries $\partial \Sigma_1$ and $\partial \Sigma_2$.
\begin{enumerate}
\item [\rm (a)] $(${\bf Interior principle}$)$ Suppose that there exists a common point $x$ in the interior of $\Sigma_1$ and $\Sigma_2$ where the corresponding tangent
planes coincide and such that \(\Sigma_{1}\) lies at one side of \(\Sigma_{2}\). Then $\Sigma_1$ coincides with $\Sigma_2$.
\smallskip
\item [\rm (b)] $(${\bf Boundary principle}$)$
Suppose that the boundaries $\partial \Sigma_1$ and $\partial \Sigma_2$ lie in the same plane $\Pi$ and that the intersection of \(\Sigma_{1}\), \(\Sigma_{2}\) with \(\Pi\)
is transversal. Assume that \(\Sigma_{1}\) lies at one side of \(\Sigma_{2}\) and that there exists a common point of $\partial \Sigma_1$ and $\partial \Sigma_2$ where the
surfaces
$\Sigma_1$ and $\Sigma_2$
have the same tangent plane. Then $\Sigma_1$ coincides with $\Sigma_2$.
\end{enumerate}
\end{theorem}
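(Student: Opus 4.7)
My plan is to reduce both parts of the tangency principle to the strong maximum principle (respectively the Hopf boundary point lemma) applied to a linear uniformly elliptic equation satisfied by the difference of two graphical translators sharing a tangent plane.

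\textbf{Graphical set-up and interior case.} At the common touching point $p$, choose affine coordinates $(y_1,y_2,y_3)$ so that the shared tangent plane is $\{y_3=0\}$. On a small disk $D\subset\{y_3=0\}$ each $\Sigma_i$ is a graph $y_3=u_i(y_1,y_2)$ with $u_i(0)=0$ and $Du_i(0)=0$, and the translator equation $H=-\langle \mathbf{v},\xi\rangle$ in graph form becomes the quasilinear elliptic PDE
\[
Q[u]:=\operatorname{div}\!\left(\frac{Du}{\sqrt{1+|Du|^2}}\right)+\frac{\langle v',Du\rangle-v_3}{\sqrt{1+|Du|^2}}=0,
\]
where $\mathbf{v}=(v',v_3)$ in the new coordinates. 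After relabelling we may assume $w:=u_1-u_2\geq 0$ on $D$. Writing $Q[u_1]-Q[u_2]$ as an integral of the derivative of $Q$ along the segment joining the $2$-jets of $u_2$ and $u_1$, the function $w$ satisfies a linear equation $a^{ij}D_{ij}w+b^iD_iw=0$ with continuous coefficients and uniformly positive-definite principal part. Since $w\geq 0$ attains its minimum $0$ at the interior point $0$, E.~Hopf's strong maximum principle forces $w\equiv 0$ on $D$, so $\Sigma_1$ and $\Sigma_2$ coincide in a neighbourhood of $p$.

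\textbf{Propagation and boundary case.} The subset of $\Sigma_1$ on which $\Sigma_1$ and $\Sigma_2$ locally coincide is now open (by reapplying the preceding argument) and closed (by continuity, the one-sided order being preserved in the limit since both surfaces are embedded), so connectedness of $\Sigma_1$ yields $\Sigma_1\subseteq\Sigma_2$, and symmetry gives equality. For the boundary principle, the hypothesis that each $\Sigma_i\cap\Pi$ is transversal, together with the coincidence of tangent planes at the common boundary point $p$, permits representing a neighbourhood of $p$ on both surfaces as graphs of $u_i$ over a common half-disk $D^+\subset\{y_3=0\}$ whose straight edge is the projection of $\Sigma_i\cap\Pi$. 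Since $\partial\Sigma_1,\partial\Sigma_2\subset\Pi$, the functions agree along that edge, so $w\geq 0$ solves the same linear uniformly elliptic equation on $D^+$, vanishes on the straight edge, and satisfies $Dw(0)=0$. Hopf's boundary point lemma at the corner $0$ (applied with an interior tangent ball contained in $D^+$) then forces $w\equiv 0$, and the propagation step above completes the proof.

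\textbf{Main obstacle.} The only delicate step is the boundary-case geometric set-up: one must combine transversality with the shared tangent plane to see that, after a suitable change of coordinates, both surfaces are simultaneously graphs over the \emph{same} half-disk whose straight edge records both $\Sigma_1\cap\Pi$ and $\Sigma_2\cap\Pi$. Once this common graphical picture is in place, the linearization of $Q$ and the classical elliptic maximum principles do the rest; going from local to global coincidence is then immediate from connectedness together with embeddedness.
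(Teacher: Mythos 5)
Your argument is correct and is essentially the standard proof of this tangency principle: the paper itself does not prove Theorem \ref{thm:tangency_principle} but quotes it from \cite{fra14}, where it is established exactly as you do --- write both translators as graphs over the common tangent plane, linearize the translator graph equation so that the ordered difference $w=u_1-u_2\ge 0$ solves a uniformly elliptic equation $a^{ij}D_{ij}w+b^iD_iw=0$, apply the strong maximum principle (interior case) or the Hopf boundary point lemma at an interior tangent ball (boundary case, where transversality to $\Pi$ guarantees the common graphical half-disk), and propagate by an open-and-closed argument. One small caveat: since $\partial\Sigma_1$ and $\partial\Sigma_2$ need not project to the same edge curve, your assertion that $u_1$ and $u_2$ ``agree along the straight edge'' is not justified in general, but it is also not needed --- the Hopf lemma only requires $w\ge 0$ on the common domain, $w(0)=0$ and $Dw(0)=0$ together with an interior ball touching the edge at $0$.
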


\subsection{A compactness theorem for minimal surfaces}
Let $\Sigma$ be a surface in a $3$-manifold $(\Omega,g)$. Given $p\in\Sigma$ and
$r>0$ we denote by
$$D_r(p):=\big\{w\in T_p\Sigma:|w|<r\big\}$$
the tangent disc of radius $r$. Consider now $T_p\Sigma$ as a vector subspace of $T_p\Omega$
and let $\nu$ be the unit normal vector of $T_p\Sigma$ in $T_p\Omega$. Fix a sufficiently small $\varepsilon>0$ and denote by
$W_{r,\varepsilon}(p)$ the solid cylinder around $p$, that is
$$
W_{r,\varepsilon}(p):=\big\{\exp_p(q+t\nu_q):q\in D_r(p)\,\,\text{and}\,\,
|t|\le\varepsilon\big\},
$$
where $\exp$ stands for the exponential map of the ambient Riemannian $3$-manifold $(\Omega,g)$.
Given a function $u:D_r(p)\to\real{}$, the set
$$\operatorname{Graph}(u):=\big\{\exp_p(q+u(q)\nu_q):q\in D_r(p)\big\}$$ 
is called the graph of $u$ over $D_r(p)$.
\begin{definition}[\bf Convergence in the $C^{\infty}$-topology]
Let $(\Omega,g)$ be a Riemannian $3$-manifold and $\{M_i\}_{i\in\natural{}}$
a sequence of connected embedded surfaces. The sequence $\{M_i\}_{i\in\natural{}}$ converges
in the $C^{\infty}$-topology with finite multiplicity to a smooth embedded surface $M_{\infty}$ if:
\begin{enumerate}
\item[\rm (a)] $M_{\infty}$ consists of accumulation points of $\{M_i\}_{i\in\natural{}}$, that is for each $p\in M_{\infty}$
there exists a sequence of points $\{p_i\}_{i\in\natural{}}$
such that $p_i\in M_i$, for each $i\in\natural{}$, and $p={\lim}_{i\to\infty} p_i$.
\smallskip
\item [\rm (b)]
For all $p\in M_{\infty}$ there exist $r,\varepsilon>0$ such that
$M_{\infty}\cap W_{r,\varepsilon}(p)$ can be represented as the graph of a function
$u$ over $D_r(p)$.
\smallskip
\item [\rm (c)]
For all large $i\in\natural{}$, the set
$M_i\cap W_{r,\varepsilon}(p)$ consists of a finite number $k$, independent of $i$,
of graphs of functions $u^1_i,\dots,u^k_i$ over $D_r(p)$ which converge smoothly to $u$.
\end{enumerate}
The multiplicity of a given point $p\in M_{\infty}$ is defined to be the number of graphs in
$M_i\cap W_{r,\varepsilon}(p)$, for $i$ large enough.
\end{definition}
\begin{remark}
Note that although each surface of the sequence $\{M_i\}_{i\in\natural{}}$ is connected
the limiting surface $M_{\infty}$ is not necessarily connected. However, the multiplicity
remains constant on each connected component $\Sigma$ of $M_{\infty}$.
For more details we refer to \cite{perez,choi}.
\end{remark}

\begin{definition}
Let $\{M_i\}_{i\in\natural{}}$ be a sequence of embedded surfaces in a Riemannian $3$-manifold
$(\Omega,g)$.
\begin{enumerate}
\item[\rm(a)]
We say that $\{M_i\}_{i\in\natural{}}$ has uniformly 
bounded area on compact subsets of $\Omega$ if
$${\limsup}_{i\to\infty}\operatorname{area}\{M_i\cap K\}<\infty,$$
for any compact subset $K$ of $\Omega$.
\medskip
\item[\rm(b)]
We say that
$\{M_i\}_{i\in\natural{}}$ has uniformly bounded genus on compact subsets of $\Omega$ if
$${\limsup}_{i\to\infty}\operatorname{genus}\big\{M_i\cap K\big\}<\infty,$$
for any compact subset $K$ of $\Omega$.
\end{enumerate}
\end{definition}

\begin{theorem}[\bf White\rq{s} compactness 
theorem]\label{thm:compactness_theorem_Brian_White}
Let $(\Omega,g)$ be an arbitrary Riemannian $3$-manifold.  Suppose that 
$\{M_{i}\}_{i\in\natural{}}$
is a sequence of connected properly embedded minimal surfaces. Assume that the area and the genus of
$\{M_{i}\}_{i\in\natural{}}$ are uniformly bounded on compact subsets of $\Omega$. Then,
after  passing to a subsequence, $\{M_{i}\}_{i\in\natural{}}$ converges to a
smooth properly embedded minimal surface $M_{\infty}\subset\Omega$. The convergence is smooth
away from a discrete set denoted by $\operatorname{Sing}$. Moreover, for each connected
component $\Sigma$ of $M_{\infty}$, either
\begin{enumerate}
\item [\rm (a)] the convergence to $\Sigma$ is smooth everywhere with multiplicity $1$, or
\smallskip
\item [\rm (b)] the convergence is smooth, with some multiplicity greater than one, away
from $\Sigma\cap\operatorname{Sing}$.
\end{enumerate}
Now suppose that $\Omega$ is an open subset of $\real{3}$ while the metric $g$ is not
necessarily flat. If $p_i=(p_{1i},p_{2i},p_{3i})\in M_i$, $i\in\natural{}$, converges to $p\in M_{\infty}$ then, after
passing to a further subsequence, either
$T_{p_i}M_i\to T_pM $
or there exists a sequence of real number $\{\lambda_i\}_{i\in\natural{}}$ tending to $\infty$
such that the sequence of surfaces $\{\lambda_i(M_i-p_i)\}_{i\in\natural{}}$, where
$$\lambda_i(M_i-p_i)=\big\{\lambda_i(x_1-p_{1i},x_2-p_{2i},x_3-p_{3i})
\in\real{3}:(x_1,x_2,x_3)\in M\big\},$$
converge smoothly and with multiplicity $1$ to a non-flat, complete and properly embedded
minimal surface ${M}^*_{\infty}$ of finite total curvature and with ends parallel to
$T_pM_{\infty}$.
\end{theorem}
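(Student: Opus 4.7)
The plan is to combine three classical ingredients: weak varifold compactness coming from the area bound, interior $\varepsilon$-regularity controlling the second fundamental form at points of small energy, and a rescaling procedure to capture blow-up behaviour where curvature concentrates. The uniform genus bound enters at one crucial step, forcing the singular set of the convergence to be discrete.

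First I would extract a subsequential limit. The uniform area bound on compact subsets of $\Omega$ together with Allard's integral varifold compactness produces a stationary integral $2$-varifold limit $V$; at every point where $V$ has density close to $1$ Allard regularity delivers a smooth minimal surface, yielding a candidate $M_\infty$. To promote this to smooth convergence away from a discrete set, I would invoke a Choi-Schoen-type $\varepsilon$-regularity statement: there exists $\varepsilon_0>0$ such that whenever $\int_{B_r(p)\cap M_i} |A|^2 <\varepsilon_0$ one has pointwise curvature bounds on a smaller ball. Together with the monotonicity formula this yields smooth graphical convergence on the complement of the bad set
\[
\operatorname{Sing}=\Big\{p\in\Omega:\liminf_{i}\int_{B_r(p)\cap M_i}|A|^2\ge\varepsilon_0\text{ for all }r>0\Big\}.
\]

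The uniform genus bound now does its work: via a neck-pinching and bubbling analysis, each point of $\operatorname{Sing}$ either forces a non-trivial handle of $M_i$ to pinch off or a new handle to form, and by a local Gauss-Bonnet argument each such event charges a definite amount of genus against a definite amount of curvature. Since the total genus on compact sets is uniformly bounded, $\operatorname{Sing}$ must be locally finite, i.e.\ discrete. On each connected component $\Sigma$ of $M_\infty$ the multiplicity is integer-valued and constant on $\Sigma\setminus\operatorname{Sing}$ by the constancy theorem for stationary varifolds, and removability of the isolated singular points extends this constancy over $\Sigma$, producing the dichotomy (a) versus (b).

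For the final blow-up statement, suppose $p_i\to p$ but $T_{p_i}M_i\not\to T_pM_\infty$. Then $|A_{M_i}|(p_i)$ cannot remain bounded, since otherwise graphical convergence would align tangent planes. A Brakke-type point-picking lemma lets me pick $\lambda_i\to\infty$ so that the rescaled surfaces $\lambda_i(M_i-p_i)$ have uniformly bounded curvature on compact subsets of $\real{3}$; since the metric $g$ scales to the Euclidean metric under the blow-up, the rescaled sequence subconverges smoothly and with multiplicity one to a complete, non-flat, properly embedded minimal surface $M^*_\infty\subset\real{3}$. Away from the rescaling point the original surfaces lie in slabs collapsing onto $T_pM_\infty$, so each end of $M^*_\infty$ is asymptotic to a plane parallel to $T_pM_\infty$; combined with the inherited finiteness of topology, the Chern-Osserman theorem then gives finite total curvature. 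The main obstacle I anticipate is precisely the discreteness of $\operatorname{Sing}$: one must quantify how every concentration of curvature is paid for by a definite amount of genus, which is the technical heart of White's argument and the reason the genus bound (and not merely area) is essential.
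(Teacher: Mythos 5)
You should first be aware that the paper does not prove Theorem \ref{thm:compactness_theorem_Brian_White} at all: it is quoted verbatim as an external tool from White's paper \cite{whi15} (and \cite{whi12}), so there is no internal argument to compare yours against. Judged as a reconstruction of White's own proof, your outline does capture the correct architecture -- locally bounded area giving weak (varifold) subconvergence, a Choi--Schoen type $\varepsilon$-regularity statement giving smooth graphical convergence off the curvature-concentration set, the genus bound forcing that set to be discrete, and point-picking blow-ups producing complete embedded minimal surfaces of finite total curvature in $\real{3}$. These are indeed the ingredients White uses.

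However, two steps as you state them would not go through. First, the assertion that every curvature-concentration point ``charges a definite amount of genus'' is false as written: the blow-up at a singular point can be a catenoid, which has genus zero, and a catenoidal neck joining two sheets of a multiple-multiplicity limit need not consume any genus of $M_i$ locally -- it may instead change the number of boundary components or merge sheets. White's accounting is in terms of a combined quantity (genus plus the count of necks/sheet-identifications, controlled via the local Gauss--Bonnet theorem and the quantization of total curvature in multiples of $4\pi$ for complete embedded ends), and this is also what yields statement (a): at a singular point the blow-up is non-flat, hence has at least two ends, hence at least two sheets collapse there, so multiplicity-one convergence can have no singular points -- a fact your constancy-theorem argument does not deliver. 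Second, your derivation of finite total curvature for $M^*_\infty$ via Chern--Osserman runs in the wrong direction: Chern--Osserman deduces finite topology and planar/catenoidal ends \emph{from} finite total curvature. What one actually has after rescaling is a uniform quadratic area bound (inherited from the local area bounds via monotonicity) together with finite genus, and one must argue from these -- e.g.\ ruling out helicoidal behaviour by the area growth and invoking the structure theory for embedded finite-topology ends -- that the total curvature is finite. Both gaps sit exactly at what you correctly identify as the technical heart of the theorem, so the proposal is an accurate roadmap but not yet a proof.
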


A crucial assumption in the compactness theorem of White is that the sequence has uniformly
bounded area on compact subsets of $\Omega$. Let us denote by
$$\mathscr{Z}:=\big\{p\in \Omega: {\limsup}_{i\to\infty}{\operatorname{area}}\{M_{i}\cap \mathbb{B}_r(p)\}
= \infty \text{ for every } r>0\big\},$$
the set where the area blows up. Clearly $\mathscr{Z}$ is a closed set. It will be useful to have conditions that will imply that the set $\mathscr{Z}$ is empty. 
In this direction, White \cite[Theorem 2.6 and Theorem 7.4]{whi12} shows that under some 
natural conditions the set $\mathscr{Z}$ satisfies 
the same maximum principle as properly embedded minimal surfaces without boundary.
\begin{theorem}[\bf White\rq{s} strong barrier principle]\label{thm:Controlling_area-blowup}
Let $(\Omega,g)$ be a Riemannian $3$-manifold and $\{M_{i}\}_{i\in\natural{}}$ a sequence of properly
embedded minimal surfaces,
with boundaries $\{\partial M_i\}_{i\in\natural{}}$ in $(\Omega,g)$. Suppose that:
\begin{enumerate}
\item[\rm (a)]
The lengths of $\{\partial M_i\}_{i\in\natural{}}$ are uniformly bounded on compact subsets of $\Omega$,
that is
$${\limsup}_{i\to\infty} {\operatorname{length}}\{\partial M_{i}\cap K\}< \infty,$$
for any relatively compact subset $K$ of $\Omega$.
\smallskip
\item[\rm (b)]
The set $\mathscr{Z}$ of $\{M_{i}\}_{i\in\natural{}}$ is contained in a closed
region $N$ of $\Omega$ with smooth, connected boundary $\partial N$ such that
$g\big(H_{\partial N},\xi\big)\ge 0,$
at every point of $\partial N$, where $H_{\partial N}(p)$ is the mean curvature vector of
$\partial N$ at $p$ and $\xi(p)$ is the unit normal at $p$ to the surface $\partial N$ that points into 
$N$.
\end{enumerate}
If the set $\mathscr{Z}$ contains
any point of $\partial N$, then it contains all of $\partial N$.
\end{theorem}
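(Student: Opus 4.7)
The plan is to prove this statement by combining three ingredients: varifold compactness of the sequence $\{M_i\}_{i\in\natural{}}$, the structure of the area blow-up set $\mathscr{Z}$, and the classical strong maximum principle for stationary minimal surfaces against mean-convex barriers.

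First, I would pass to a subsequence and consider the integral varifold limit $V$ of $\{M_i\}_{i\in\natural{}}$. Hypothesis (a) on the uniform boundedness of the boundary lengths on compact subsets of $\Omega$ ensures that the boundary contribution to the first variation of $V$ remains locally bounded; in particular, $V$ is stationary in the interior of $\Omega$ away from any accumulation of $\partial M_i$. The set $\mathscr{Z}$ is then precisely the set of points at which the mass of $V$ fails to be locally finite.

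Next, I would fix a point $p\in\mathscr{Z}\cap\partial N$ and analyze the situation near $p$ via a tangent varifold argument. Since $\mathscr{Z}\subset N$ and the mean curvature vector of $\partial N$ points into $N$, every tangent varifold of $V$ at $p$ must be supported in the closed half-space bounded by $T_p\partial N$ on the side of $\xi(p)$. Because the local mass of $V$ near $p$ is unbounded, this tangent varifold is nontrivial. Applying the strong maximum principle for stationary integer varifolds against a supporting plane (which in codimension $1$ reduces to a classical statement combining tangent cone analysis with Allard's regularity theorem) forces the tangent varifold to contain $T_p\partial N$ with positive multiplicity.

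To upgrade this pointwise conclusion to an open condition, I would perturb $\partial N$ slightly into $N$ along $\xi$ to produce a strictly mean-convex smooth barrier, against which the standard maximum principle for smooth minimal graphs applies in the style of White \cite{whi12}. Combining these estimates with the fact that the mass of $V$ is already infinite at $p$, one obtains a whole open neighborhood of $p$ in $\partial N$ lying inside $\mathscr{Z}$. Since $\partial N$ is connected and $\mathscr{Z}\cap\partial N$ is both closed (as $\mathscr{Z}$ is closed) and open by the previous step, it must coincide with $\partial N$, proving the theorem.

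The main obstacle is the propagation step at the level of $\mathscr{Z}$ rather than of $V$: a priori $\mathscr{Z}$ could be much smaller than the support of $V$, so one has to verify that the plane-barrier argument actually produces a whole open family of points at which the mass is genuinely infinite, not merely large but finite. The resolution relies on integer rectifiability of $V$ together with the observation that area concentration along a tangent plane of $\partial N$ can occur only if the surfaces $M_i$ wind around $p$ infinitely many times as $i\to\infty$, which forces analogous concentration at nearby points and thereby transfers the blow-up of area to an open portion of $\partial N$.
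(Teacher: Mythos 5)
This statement is not proved in the paper at all: it is quoted verbatim as a known result of White, with a reference to \cite[Theorem 2.6 and Theorem 7.4]{whi12}, and the surrounding remark even stresses that it is a special case of a more general theorem of White for hypersurfaces of bounded mean curvature. So there is no in-paper argument to match your proposal against; what can be assessed is whether your sketch would stand as an independent proof, and there it has a genuine gap at its foundation. Your first step is to pass to ``the integral varifold limit $V$'' of $\{M_i\}$ and to describe $\mathscr{Z}$ as the set where the mass of $V$ fails to be locally finite. But a varifold is by definition a Radon measure and has locally finite mass; Allard-type compactness requires uniform local mass bounds, which are exactly what fails on $\mathscr{Z}$. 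One can extract a varifold limit only on the open set $\Omega\setminus\mathscr{Z}$, and that limit carries no information at the point $p\in\mathscr{Z}\cap\partial N$ where you want to run the tangent-varifold and half-space maximum-principle analysis. In other words, the object on which your entire argument operates does not exist precisely where the theorem is nontrivial; the whole point of White's result is to show that the \emph{set} $\mathscr{Z}$ itself obeys the maximum principle of a minimal variety even though no varifold limit is available near it.

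The second weakness is the propagation (openness) step. You acknowledge that one must show blow-up at $p$ forces blow-up at nearby points of $\partial N$, but the proposed resolution --- that concentration along $T_p\partial N$ ``can occur only if the surfaces $M_i$ wind around $p$ infinitely many times,'' which ``forces analogous concentration at nearby points'' --- is an assertion of the conclusion rather than an argument, and it is not clear how integer rectifiability of a limit (which, again, is unavailable at $p$) would deliver it. The closed-plus-open argument on the connected surface $\partial N$ is the right skeleton, and hypothesis (a) is indeed what keeps boundary terms from interfering, but the quantitative mechanism by which area blow-up is transported along a mean-convex barrier has to be built directly from the surfaces $M_i$ (as in White's paper, via comparison arguments applied to the sequence itself), not from a limiting varifold. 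As it stands, the proposal would need to be restructured around the set $\mathscr{Z}$ rather than around a limit object, so it does not yet constitute a proof; for the purposes of this paper, citing \cite{whi12} as the authors do is the appropriate course.
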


\begin{remark}
The above theorem is a sub-case of a more general result of White. In fact the strong barrier
principle of White holds for sequences of embedded hypersurfaces of $n$-dimensional Riemannian
manifolds which are not necessarily minimal but they have uniformly bounded mean curvatures. For
more details we refer to \cite{whi12}.
\end{remark}

\subsection{Distance in Ilmanen's metric}

Due to a result of Ilmanen \cite{ilmanen} there is a duality between translators in the euclidean space $\real{3}$ and  
minimal surfaces in $(\real{3},\operatorname{g})$, where $\operatorname{g}$ is the conformally flat Riemannian
metric
$$\operatorname{g}(\cdot\,,\cdot):=e^{x_3}\langle\cdot\,,\cdot\rangle,$$
and $\langle\cdot\,,\cdot\rangle$ stands for the euclidean inner product of 
$\real{3}$. The metric $\operatorname{g}$ will be called Ilmanen's metric. In particular, every translator in the euclidean space $\real{3}$ is a minimal surface in $(\real{3},\operatorname{g})$
and vice-versa. The Levi-Civita connection $D^{\operatorname{g}}$ of $\operatorname{g}$ is related to the Levi-Civita connection $D$
of the euclidean space via the relation
$$D^{\operatorname{g}}_{X}Y=D_XY+\frac{1}{2}\big\{\langle X,\partial_{x_3}\rangle Y+\langle Y,\partial_{x_3}\rangle X-\langle X,Y\rangle\partial_{x_3}\big\}.$$
One can check that parallel transports and rotations with respect to the euclidean metric that preserve $\v$ preserve the geodesics of $(\real{3},\operatorname{g})$.
Moreover, one can easily verify that vertical straight lines and ``grim-reaper-type" curves, i.e.,
images of smooth curves $\gamma:(-\pi,\pi)\to(\real{3},\operatorname{g})$ of the form
$$\gamma(t)=\big(t,0,-2\log\cos\tfrac{t}{2}\big),$$
are geodesics with respect to the Ilmanen's metric. Using the above mentioned transformations we can construct all the geodesics
of $(\real{3},\operatorname{g})$.
Let now $\delta$ be a sufficiently small positive number and $p=(p_1,p_2,p_3)$ a point in $\real{3}$ such that $p_1\in(-\delta,0)$ and $p_3>0$. Let us
denote by ${\dist}_{\operatorname{g}}(p,\Pi(0))$ the distance of $p$ from the plane
$$\Pi(0)=\big\{(x_1,x_2,x_3)\in\real{3}:x_1=0\big\}.$$
with respect to the Ilmanen's metric and by $\dist(p,\Pi(0))=-p_1$ the euclidean distance of the point $p$ from
the plane $\Pi(0)$. The distance ${\dist}_{\operatorname{g}}(p,\Pi(0))$ is given as the length with respect to the Ilmanen's metric
of the smooth curve $l:(p_1,0)\to(\real{3},\operatorname{g})$ given by
$$l(t)=\big(t,p_2,-2\log\cos \tfrac{t}{2}+2\log\cos \tfrac{p_1}{2}+p_3\big)$$
A direct computation shows that
\begin{equation*}
{\dist}_{\operatorname{g}}(p,\Pi(0))=\int^0_{p_1}e^{\tfrac{p_3}{2}}\,\cdot\frac{\cos\tfrac{p_1}{2}}{\cos\tfrac{t}{2}}\cdot
\sqrt{1+\big(\tan\tfrac{t}{2}\big)^2}dt=2e^{\tfrac{p_3}{2}}\cdot\sin\tfrac{{\dist}(p,\Pi(0))}{2}.
\end{equation*}
From the above formula we immediately obtain the following result which will be very useful in the last step
of the proof of our theorem.
\begin{lemma}\label{asymptotic-lemma}
Suppose that $M$, regarded as a minimal surface in $(\real{3},\operatorname{g})$, is $C^{\infty}$-asymptotic to two parallel
vertical half-planes
$\mathcal{H}_1$ and $\mathcal{H}_2$ outside the cylinder $\mathcal{C}$. Then the translator $M$
is also smoothly asymptotic to the above mentioned half-planes outside $\mathcal{C}$ with respect to the euclidean metric.
\end{lemma}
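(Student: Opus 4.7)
The plan is to extract Euclidean $C^0$-closeness from the geodesic distance formula just derived, and then upgrade to $C^{\infty}$ via interior elliptic regularity for the translator equation. Let $M_1$ be the connected component of $M\setminus\mathcal{C}$ that is $\operatorname{g}$-asymptotic to $\mathcal{H}_1$; after a translation in the $x_1$-direction we may assume $\mathcal{H}_1\subset\Pi(0)=\{x_1=0\}$, and we argue symmetrically for $\mathcal{H}_2$.

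By hypothesis, the Ilmanen distance from any point $p\in M_1$ whose projection lies in $\mathcal{H}_{1,\delta}$ to $\Pi(0)$ tends to zero uniformly as $\delta\to\infty$. The explicit distance formula above,
\[
\dist_{\operatorname{g}}(p,\Pi(0)) = 2\,e^{p_3/2}\,\sin\!\bigl(\dist(p,\Pi(0))/2\bigr),
\]
inverted for $p_1\in(-\pi/2,0)$ (where $\sin$ is bi-Lipschitz on the relevant range), yields
\[
\dist(p,\Pi(0)) \leq C\, e^{-p_3/2}\, \dist_{\operatorname{g}}(p,\Pi(0)).
\]
Since $p_3\geq r_0+\delta$ on $\mathcal{H}_{1,\delta}$, this bound decays at least exponentially in $\delta$. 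Moreover, because the tangent plane of $M_1$ at any point is a $2$-plane in $\real{3}$ independent of the ambient Riemannian metric, the $\operatorname{g}$-$C^1$-convergence of $M_1$ to $\mathcal{H}_1$ automatically entails convergence of Euclidean tangent planes. Consequently, for $\delta$ sufficiently large, $M_1$ admits a Euclidean normal-graph representation $\varphi:\mathcal{H}_{1,\delta}\to\real{}$ with $\sup_{\mathcal{H}_{1,\delta}}|\varphi|\to 0$ and $\sup_{\mathcal{H}_{1,\delta}}|D\varphi|\to 0$ as $\delta\to\infty$.

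The final step is higher regularity. Since $M$ is a translator, the graph function $\varphi$ solves a quasi-linear elliptic translator equation, which becomes uniformly elliptic with smooth, nearly constant coefficients once the above $C^1$-smallness is used. Standard interior Schauder estimates applied on Euclidean balls of fixed unit size centered deep inside $\mathcal{H}_{1,\delta}$ then bound $\sup|D^k\varphi|$ on the inner half of each ball by $\sup|\varphi|$ on the full ball, for every $k\geq 1$. Combined with the $C^0$-decay already established, this gives $\sup_{\mathcal{H}_{1,\delta}}|D^k\varphi|\to 0$ as $\delta\to\infty$, which is precisely Euclidean $C^{\infty}$-asymptoticity. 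The main obstacle is the zeroth-order step; the exponential blow-up of the conformal factor $e^{x_3}$ along the wings makes Ilmanen-closeness a much stronger condition than Euclidean-closeness, and only the explicit geodesic distance formula makes this transition transparent. The higher-order upgrade is then routine PDE regularity.
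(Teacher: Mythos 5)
Your proposal is correct and matches the paper's approach: the paper offers no written proof beyond the remark that the lemma follows ``immediately'' from the displayed distance formula, and your zeroth-order step (inverting $\dist_{\operatorname{g}}(p,\Pi(0))=2e^{p_3/2}\sin(\dist(p,\Pi(0))/2)$, using that $p_3\to+\infty$ along the wings so the factor $e^{-p_3/2}$ only helps) is exactly the intended argument. Your supplements for the higher-order statement --- conformal invariance of tangent planes for the $C^1$ step and interior elliptic estimates plus interpolation for $C^\infty$ --- are sound and fill in details the paper leaves implicit.
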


\section{A compactness result and its first consequences}
The translating property is preserved if we act on $M$ via isometries of $\real{3}$ which preserves
the translating direction. Therefore, if $(a,b,c)$ is a vector of $\real{3}$ then the surface
$$M+(a,b,c)=\big\{(x_1+a,x_2+b,x_3+c)\in\real{3}:(x_1,x_2,x_3)\in M\big\}$$
is again a translator.
Based on White's compactness theorem, we can prove a convergence result for some special sequences
of translating solitons. More precisely, we show the following:

\begin{lemma}\label{pro:limit_process}
Let $M$ be a surface as in our theorem.
Suppose that
$\{b_{i}\}_{i\in\natural{}}$ is a sequence of real numbers and let $\{M_i\}_{i\in\natural{}}$ be the sequence of surfaces
given by
$\big\{M_i:=M+(0,b_i,0)\big\}_{i\in\natural{}}.$
Then, after passing to a subsequence, $\{M_{i}\}_{i\in\natural{}}$ converges smoothly with multiplicity one to
a properly embedded connected translating soliton $M_{\infty}$ which has the same asymptotic behavior as $M$.
\end{lemma}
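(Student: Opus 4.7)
The plan is to reinterpret translators as minimal surfaces in Ilmanen's metric $\gind=e^{x_3}\langle\cdot\,,\cdot\rangle$ and to invoke White's compactness theorem (Theorem~\ref{thm:compactness_theorem_Brian_White}). The key observation is that $\gind$ is independent of $x_2$, so translations in the $x_2$-direction are isometries of $(\real{3},\gind)$; consequently each $M_{i}=M+(0,b_{i},0)$ is a properly embedded, connected, boundaryless minimal surface in $(\real{3},\gind)$. Moreover, because the axis of $\mathcal{C}$ is the $x_2$-axis, both $\mathcal{C}$ and the asymptotic half-planes $\mathcal{H}_{1}$, $\mathcal{H}_{2}$ are invariant under $x_2$-translation, so every $M_{i}$ has exactly the same $C^{1}$-asymptotic behaviour outside $\mathcal{C}$ as $M$ itself, with estimates independent of $i$.

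To apply the compactness theorem I need uniformly bounded genus and area on compact subsets of $\real{3}$. Bounded genus is immediate: the function $m(r)$ provided by hypothesis controls the genus of $M$ on any ball of radius $r$, and genus is invariant under the rigid motions $x\mapsto x+(0,b_{i},0)$, so the same bound works for every $M_{i}$. For the area I would first use the asymptotic hypothesis: on $\real{3}\setminus\mathcal{C}_{\delta}$, where $\mathcal{C}_{\delta}$ is a slight thickening of $\mathcal{C}$, each $M_{i}$ is a single graph over $\mathcal{H}_{1}$ or $\mathcal{H}_{2}$ with gradient controlled uniformly in $i$, giving uniform area bounds there. Hence the blow-up set $\mathscr{Z}$ of Theorem~\ref{thm:Controlling_area-blowup} is contained in $\mathcal{C}_{\delta}$. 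To rule out $\mathscr{Z}\neq\emptyset$ I would apply White's strong barrier principle: condition~(a) is vacuous since the $M_{i}$ have no boundary, and one encloses $\mathscr{Z}$ in a smoothly bounded region $N$ whose boundary is $\gind$-mean-convex and exits $\mathcal{C}_{\delta}$ into the wing region; then the conclusion $\partial N\subset\mathscr{Z}$ would contradict the area control already established on the wings.

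With both bounds in hand, Theorem~\ref{thm:compactness_theorem_Brian_White} produces a subsequence converging, smoothly away from a discrete singular set, to a properly embedded minimal surface $M_{\infty}\subset(\real{3},\gind)$, i.e.\ to a properly embedded translator in the euclidean sense. The uniform $C^{1}$-closeness of the $M_{i}$ to $\mathcal{H}_{1}\cup\mathcal{H}_{2}$ passes to the limit, so $M_{\infty}$ is $C^{1}$-asymptotic to the same two half-planes outside $\mathcal{C}$. Since each $M_{i}$ is a single graph over each half-plane on the wings, the multiplicity of the convergence on the component of $M_{\infty}$ containing a wing is one; because multiplicity is constant on components and a closed compact translator does not exist (compact mean curvature flows develop singularities in finite time), the asymptotic structure forces $M_{\infty}$ to be connected with multiplicity one everywhere. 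Alternative~(a) of Theorem~\ref{thm:compactness_theorem_Brian_White} then upgrades the convergence to smooth convergence with empty singular set.

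\textbf{Main obstacle.} The substantive step is the uniform area bound \emph{inside} $\mathcal{C}$: a~priori, translating $M$ in $x_2$ could concentrate arbitrarily much area and topology in the core. The asymptotic hypothesis controls only the wings, so the interior argument requires the strong barrier principle together with the careful choice of a $\gind$-mean-convex region whose boundary reaches into the exterior where blow-up has already been excluded; the multiplicity-one claim and the connectedness of $M_{\infty}$ are secondary but also depend on this control.
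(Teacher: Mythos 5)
Your proposal follows essentially the same route as the paper: pass to Ilmanen's metric, get the genus bound for free from the hypothesis, obtain uniform area bounds on the wings from the graphical asymptotics, confine the area blow-up set $\mathscr{Z}$ to a neighbourhood of the solid cylinder, and kill it with White's strong barrier principle before invoking the compactness theorem. The paper is more concrete at two points where you are schematic: it exhibits the barrier explicitly (a translating paraboloid slid down until first contact with $\mathscr{Z}$, contradicting the fact that $\mathscr{Z}$ lies inside a cylinder), and, because it applies the barrier principle to the truncated pieces $M_i\cap Z^-_{1\delta}\cap Z^-_{2\delta}$, it first proves a uniform length bound for their boundaries. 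Your shortcut of applying the barrier principle to the boundaryless surfaces $M_i$ themselves, so that condition (a) is vacuous, is legitimate and slightly cleaner; but you should still justify the existence of the mean-convex region $N$, which is exactly what the paraboloid provides.

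The one genuine gap is in your connectedness argument. You rule out extra components of $M_\infty$ only by observing that no closed compact translator exists, but a spurious component need not be compact: a priori it could be a complete, non-compact, properly embedded translator trapped inside the solid cylinder (extending to infinity in the $x_2$-direction). The paper excludes this by noting that any component disjoint from the wings must lie in $\mathcal{C}$, hence has $x_3$ bounded from above, which is impossible for a complete translator (e.g.\ since $\Delta_M x_3=1-|\nabla x_3|^2\ge 0$, or by sliding a grim reaper cylinder from above and applying the tangency principle). Without this step your multiplicity-one claim is also incomplete, since it is established only on the component containing the wings; once connectedness is known, multiplicity one everywhere and smooth convergence follow as you say.
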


\begin{proof}
Recall that any translator $M\subset\real{3}$ can be 
regarded as a minimal surface of $(\Omega=\real{3},\operatorname{g})$ where $\operatorname{g}$ is the Ilmanen's metric.  Notice that each element
of the sequence $\{M_{i}\}_{i\in\natural{}}$ has the same asymptotic behavior as $M$.
Without loss of generality, we can arrange the coordinate system such that
$$\mathcal{C}=\big\{(x_1,x_2,x_3)\in\real{3}:x^2_1+x^2_3\le r_0^2\big\}.$$
By assumption our surface $M$ is $C^1$-asymptotic outside $\mathcal{C}$ to two half-planes $\mathcal{H}_1$, $\mathcal{H}_2$ (see Fig. \ref{tilted_planes}).
\begin{figure}[h]
\includegraphics[scale=0.40]{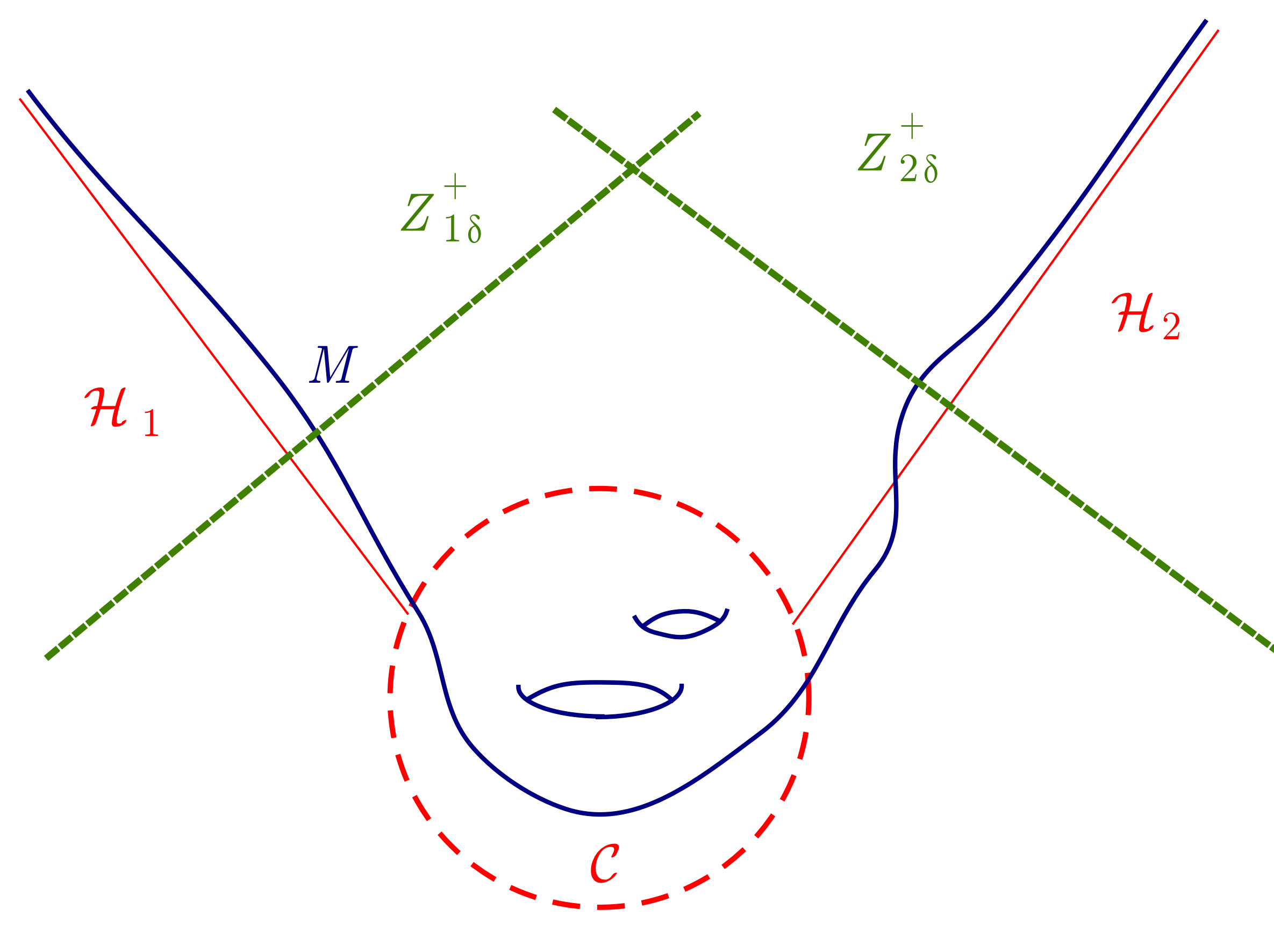}\caption{Asymptotic behaviour with tilted half-planes}\label{tilted_planes}
\end{figure}
Let now $\mbox{w}_1$,
$\mbox{w}_2$
be the unit inward pointing vectors of $\partial\mathcal{H}_1$, $\partial\mathcal{H}_2$, respectively. For any $\delta>0$ consider the closed half-planes
$$\mathcal{H}_{k}(\delta)=\{p+t\mbox{w}_k:p\in\partial\mathcal{H}_k\,\,\text{and}\,\, t\ge\delta\},$$
for $k\in\{1,2\}$ and denote by $Z^{+}_{k\delta}$, $k\in\{1,2\}$, the closed half-space of $\real{3}$ containing $\mathcal{H}_k(\delta)$ and
with boundary containing $\partial\mathcal{H}_k(\delta)$ and being perpendicular to $\mbox{w}_k$. Moreover, consider the closed half-spaces
$$Z^{-}_{k\delta}=\big(\real{3}-Z^{+}_{k\delta}\big)\cup\partial Z^{+}_{k\delta},$$
for any $k\in\{1,2\}$.

In the case where the sequence $\{b_i\}_{i\in\natural{}}$ is bounded, we can consider a subsequence such that $\lim b_i=b_{\infty}\in\real{}$.
Then obviously $\{M_i\}_{i\in\natural{}}$ converges smoothly with multiplicity one to the properly embedded translating soliton
$$M_{\infty}=M+(0,b_{\infty},0).$$
Clearly $M_{\infty}$ has the same asymptotic behavior with $M$.

Let us examine now the case where the sequence $\{b_i\}_{i\in\natural{}}$ is not bounded. Split each surface $M_i$ of the surface into the parts
$$M^{+}_{1i}(\delta):=M_i\cap Z^{+}_{1\delta},\,\,M^{+}_{2i}(\delta):=M_i\cap Z^{+}_{2\delta}\,\,\text{and}\,\,M^{-}_{i}(\delta):=M_i\cap Z^{-}_{1\delta}\cap Z^{-}_{2\delta}.$$

{\bf Claim 1.} {\it The sequences $\{M^{+}_{1i}(\delta)\}_{i\in\natural{}}$ and $\{M^{+}_{2i}(\delta)\}_{i\in\natural{}}$ have uniformly bounded area on compact sets.}

{\it Proof of the claim.} Let $K$ be a compact subset of $\Omega$ and $\mathbb{B}_r(0)$ a ball of radius $r$ centered at the origin of $\real{3}$ containing $K$.
Denote by $V_i$ the projection of the surface $M^{+}_{1i}(\delta)\cap K$ to the closed half-plane $\mathcal{H}_1(\delta)$. Hence we can parametrize $M^{+}_{1i}(\delta)$ by
a map $\Phi_i:V_i\to\real{3}$ of the form
\begin{eqnarray*}
\Phi_i(s,t)&=&(c_1,c_2,c_3)+s\mbox{e}_2+t\mbox{w}_1+\varphi(s-b_i,t)\mbox{e}_2\wedge\mbox{w}_1\\
&=&\big\{c_1+(\cos\alpha)t+(\sin\alpha)\varphi(s-b_i,t)\big\}\mbox{e}_1+\big\{c_2+s\big\}\mbox{e}_2\\
&&+\big\{c_3+(\sin\alpha)t-(\cos\alpha)\varphi(s-b_i,t)\big\}\mbox{e}_3,
\end{eqnarray*}
where $i\in\natural{}$, $\big\{\mbox{e}_1,\mbox{e}_2,\mbox{e}_3\big\}$ is the standard basis of $\real{3}$, $\alpha$ is the angle between the vectors $\mbox{e}_1$
and $\mbox{w}_1$ and $(c_1,c_2,c_3)$ is a fixed point on $\partial\mathcal{H}_{1}(\delta)$. By taking $\delta$ very large we can make sure that $|\varphi|$ and
$|D\varphi|$ are bounded by a universal constant $\varepsilon$. Hence, for any index $i\in\natural{}$ we have that
\begin{eqnarray*}
\mbox{area}_{\gind}\big\{M_{1i}^{+}(\delta)\cap K\big\}&=&\int_{V_i}e^{c_3+(\sin\alpha)t-(\cos\alpha)\varphi(s-b_i,t)}\sqrt{1+|D\varphi|^2}\,\,dsdt\\
&\le& \int_{V_i}e^{c_3+c(r)+\varepsilon}\sqrt{1+\varepsilon^2}\,\,dsdt\\
&=&e^{c_3+c(r)+\varepsilon}\sqrt{1+\varepsilon^2}\,\,\operatorname{area}_{\operatorname{euc}}(V_i),
\end{eqnarray*}
where $c(r)$ is a constant depending on $r$ and $\operatorname{area}_{\operatorname{euc}}(V_i)$ is the euclidean area of $V_i$. Note that
$\operatorname{area}_{\operatorname{euc}}(V_i)$ is less or equal than the euclidean area of the projection of $K$ to the plane containing $\mathcal{H}_1(\delta)$.
Thus there exists a number $m(K)$ depending only on $K$ such that
$$\mbox{area}_{\gind}\big\{M^{+}_{1i}(\delta)\cap K\big\}\le m(K).$$
Consequently, $\big\{M^{+}_{1i}(\delta)\big\}_{i\in\natural{}}$ has uniformly bounded area. Similarly, we show that $\big\{M^{+}_{2i}(\delta)\big\}_{i\in\natural{}}$ has uniformly
bounded area and this concludes the proof of the claim.

{\bf Claim 2.} {\it The sequence of surfaces $\big\{M^{-}_{i}(\delta)\big\}_{i\in\natural{}}$ has uniformly bounded area on compact sets.}

{\it Proof of the claim.} Let us show a first that the sequence $\big\{\partial M^{-}_{i}(\delta)\big\}_{i\in\natural{}}$ has uniformly bounded length on compact sets.
Following the notation introduced in the above claim, each connected component of $\partial M^{-}_i(\delta)$ can be represented as the image of the curve
$\gamma_i:\real{}\to\real{3}$ given by
\begin{eqnarray*}
\gamma_i(s)&=&\big\{c_1+(\cos\alpha)\delta+(\sin\alpha)\varphi(s-b_i,\delta)\big\}\mbox{e}_1\\
&&+\big\{c_2+s\big\}\mbox{e}_2
+\big\{c_3+(\sin\alpha)\delta-(\cos\alpha)\varphi(s-b_i,\delta)\big\}\mbox{e}_3,
\end{eqnarray*}
for any index $i\in\natural{}$. Let $K$ be a compact set of $\Omega$, $\mathbb{B}_r(0)$ a ball of radius $r$ centered at the origin and containing $K$.
Denote by $I_i$ the projection of $\partial M^{-}_i(\delta)\cap K$
to $\partial\mathcal{H}_1(\delta)$. Estimating as in Claim 1, we get that
\begin{eqnarray*}
\operatorname{length}_{\gind}\big\{\partial M^{-}_{i}(\delta)\cap K\big\}
\le\int_{I_i}e^{\tfrac{c_3+c(r)+\varepsilon}{2}}\sqrt{1+\varepsilon^2}\,\,ds,
\end{eqnarray*}
where $c(r)$ is a constant depending on $r$.
Thus, there exists a constant $n(K)$ depending only on the compact set $K$ such that
$$\operatorname{length}_{\gind}\big\{\partial M^{-}_{i}(\delta)\cap K\big\}\le n(K).$$
Hence, the sequence $\big\{\partial M^{-}_{i}(\delta)\big\}_{i\in\natural{}}$ has uniformly bounded length on compact sets.

Recall now that the set $\mathscr{Z}$ is closed. From Claim 1 it
follows that $\mathscr{Z}$ is contained inside a cylinder.
Consider now a translating paraboloid and translate it in the direction
of the $x_3$-axis until it has no common point with $\mathscr{Z}$.
Then move back the translating paraboloid until it intersects
for the first time the set $\mathscr{Z}$ (see Fig. \ref{areablow}).
\begin{figure}[h]
\includegraphics[scale=.08]{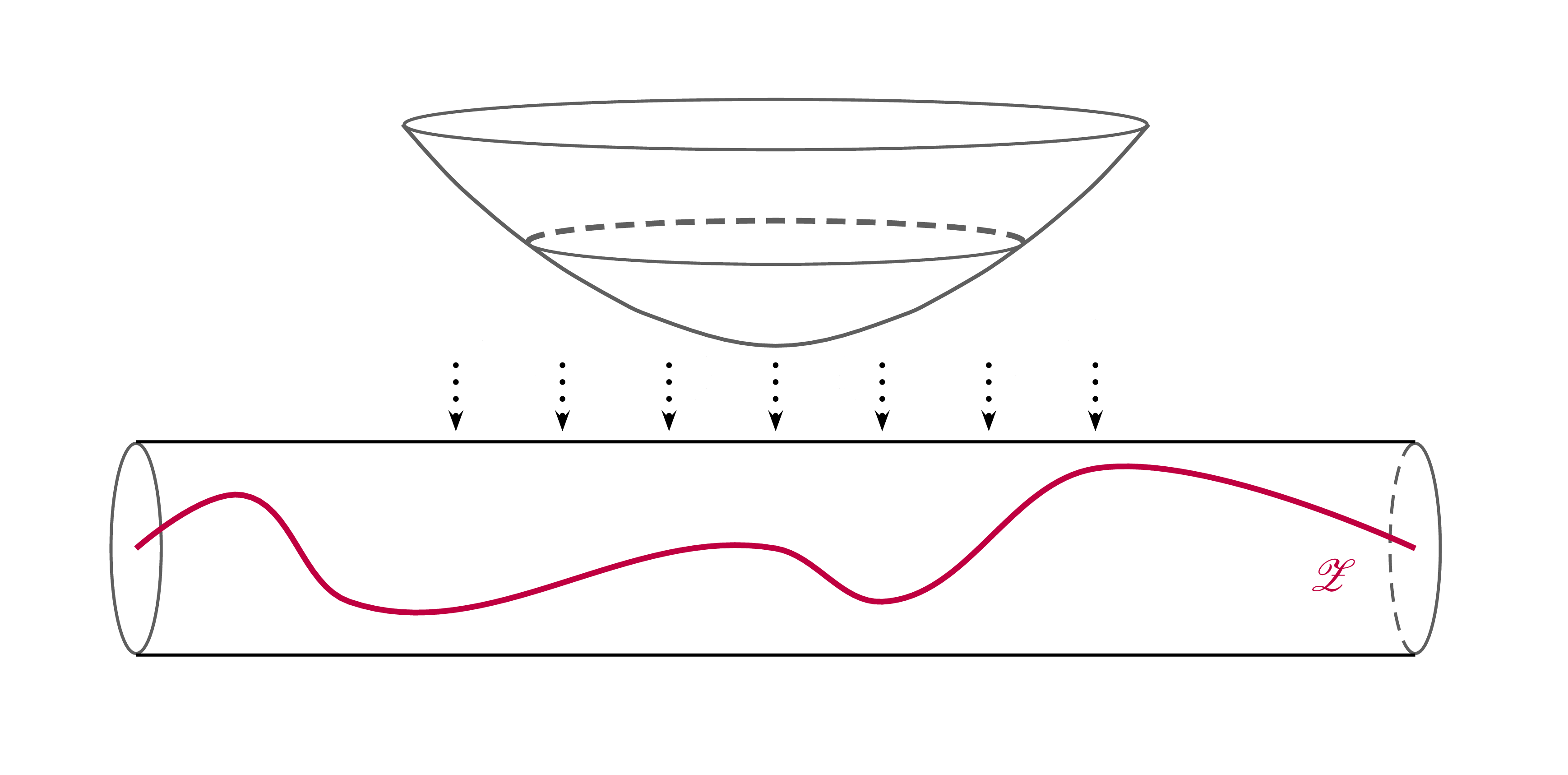}\caption{The area blow-up set $\mathscr{Z}$}\label{areablow}
\end{figure}
From the strong
barrier principle of White (Theorem \ref{thm:Controlling_area-blowup}), the translating paraboloid is contained in $\mathscr{Z}$.
But this leads to a contradiction, because now the area blow-up set $\mathscr{Z}$ is not contained inside a cylinder. Thus, $\mathscr{Z}$ must be empty and consequently
$\{{M}^-_i(\delta)\}_{i\in\natural{}}$ has uniformly bounded area.
 
Since the parts $\{{M}^+_{1i}(\delta)\}_{i\in\natural{}}$, $\{{M}^+_{2i}(\delta)\}_{i\in\natural{}}$, $\{{M}^-_{i}(\delta)\}_{i\in\natural{}}$
have uniformly bounded area, we see that the whole sequence $\{{M}_{i}\}_{i\in\natural{}}$
has uniformly bounded area. From our assumptions, also the genus
of the sequence  is uniformly bounded. The convergence to a smooth properly embedded translator
$M_{\infty}$ follows from Theorem \ref{thm:compactness_theorem_Brian_White} of White.
Since each ${M}^+_{ki}(\delta)$, $k\in\{1,2\}$, is a graph and each $M_i$ is connected, we deduce that
the multiplicity is one everywhere. Thus, the convergence is smooth.
Moreover, observe that each component of $M_{\infty}\cap Z_{k\delta}^+$, $k\in\{1,2\}$, can be represented
as the graph of a smooth function $\varphi_{\infty}$ which is the limit of the sequence of graphs generated
by the smooth functions
$$\varphi_i (s,t)=\varphi(s-b_i,t)$$
for any $i\in\natural{}$. Thus, the limiting surface $M_{\infty}$ has the same 
asymptotic behavior as $M$.
The limiting surface $M_{\infty}$ must be connected since otherwise there should exist a 
properly embedded
connected component $\Sigma$ of $M$ lying inside $\mathcal{C}$. But then,
the $x_3$-coordinate function of $\Sigma$ must be bounded from above, which is absurd.
This concludes the proof.
\end{proof}

As a first application of the above compactness result we show that the half-planes $\mathcal{H}_1$
and $\mathcal{H}_2$ must be parallel to each other.

\begin{lemma}
Let $M$ be a translating soliton as in our theorem. Then, the half-planes $\mathcal{H}_1$ and $\mathcal{H}_2$
must be parallel to the translating direction. Moreover, if $\mathcal{H}_1$ and $\mathcal{H}_2$
are parts of the same plane $\Pi$, then $M$ should coincide with $\Pi$.
\end{lemma}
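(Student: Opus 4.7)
My plan is to prove the two assertions of the lemma separately.

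\emph{Assertion 1 (half-planes parallel to $\v$).} Suppose, towards a contradiction, that $\mathcal{H}_1$ is not parallel to $\v$, so that $c:=\langle \v,\mbox{n}_1\rangle\neq 0$ for a unit normal $\mbox{n}_1$ of $\mathcal{H}_1$. Since $M_1$ is the graph of a $C^1$-small function $\varphi$ over $\mathcal{H}_{1,\delta}$ for $\delta$ large, its unit normal $\xi$ satisfies $\xi-\mbox{n}_1=O(|D\varphi|)$, and the translator equation $H=-\langle \v,\xi\rangle$ forces $H\to -c$ uniformly on $\mathcal{H}_{1,\delta}$ as $\delta\to\infty$. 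Fix $\delta_0$ such that $|H|\geq |c|/2$ on the portion of $M_1$ over $\mathcal{H}_{1,\delta_0}$, and for each large $L$ let $R_L\subset M_1$ be the graph over an $L\times L$ rectangle deep in $\mathcal{H}_{1,\delta_0}$. Since the flow of the constant vector field $\v$ consists of isometries of $\real{3}$, the first variation formula together with $\H=\v^\perp$ yields
\[
\int_{R_L}H^2\, d\area = \int_{R_L}\langle \v,\H\rangle\, d\area = \oint_{\partial R_L}\langle \v,\eta\rangle\, ds \leq \length(\partial R_L),
\]
where $\eta$ is the outer conormal to $\partial R_L$ in $M_1$. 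The left-hand side is $\gtrsim L^2$ (since $H^2\gtrsim c^2$ and the graph is almost flat), while the right-hand side is $\lesssim L$, which is impossible for $L$ large. Thus $c=0$, and analogously $\langle\v,\mbox{n}_2\rangle=0$.

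\emph{Assertion 2 (same-plane case).} By Assertion 1 the plane $\Pi$ is vertical, so after a rigid motion $\Pi=\{x_1=0\}$ and every $\Pi_t:=\{x_1=t\}$ is itself a translator. The function $x_1$ is bounded on $M$: on the wings $x_1\to 0$ by hypothesis, while $|x_1|\leq r_0$ on $M\cap\mathcal{C}$. I will prove $T:=\sup_M x_1\leq 0$; by symmetry $\inf_M x_1\geq 0$, so $x_1\equiv 0$ on $M$ and $M\subset\Pi$, which by connectedness and proper embedding forces $M=\Pi$.

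Assume $T>0$. If $T$ is attained at some $p\in M$, then the interior tangency principle (Theorem \ref{thm:tangency_principle}(a)) applied to $M$ and $\Pi_T$ at $p$ yields $M=\Pi_T$, contradicting the asymptotic of $M$ to $\Pi_0$. If $T$ is not attained, choose $p_i\in M$ with $x_1(p_i)\to T$. Since $x_1\to 0$ on the deep wings and $T>0$, for large $i$ the point $p_i$ lies in the slab where $|x_3|$ is bounded. Properness then forces $b_i:=x_2(p_i)\to\pm\infty$, say $+\infty$. By Lemma \ref{pro:limit_process} the sequence $\{M-(0,b_i,0)\}$ has, up to a subsequence, a smooth limit $M_\infty$ with the same asymptotic behavior as $M$. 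The translates $q_i:=p_i-(0,b_i,0)$ have bounded coordinates and accumulate at some $q_\infty\in M_\infty$ with $x_1(q_\infty)=T=\sup_{M_\infty}x_1$. The interior tangency principle applied to $M_\infty$ and $\Pi_T$ at $q_\infty$ gives $M_\infty=\Pi_T$, contradicting the asymptotic of $M_\infty$ to $\Pi_0$. Hence $T=0$.

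\emph{Main obstacle.} The delicate point is the ``sup not attained'' case in Assertion 2: a priori the supremum of $x_1$ can be approached only along sequences escaping to $x_2=\pm\infty$ inside $\mathcal{C}$, which prevents a direct application of the tangency principle to $M$ itself. Lemma \ref{pro:limit_process} is precisely the compactness tool that moves such a sequence into a limit surface $M_\infty$ where the supremum of $x_1$ is actually attained at an interior point, at which point Theorem \ref{thm:tangency_principle}(a) closes the argument.
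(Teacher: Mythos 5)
Your proposal is correct. For the second assertion you follow essentially the paper's route: the supremum of $x_1$ is either attained (and then the interior tangency principle against the vertical plane $\Pi_T$ forces $M=\Pi_T$, contradicting the asymptotics) or approached along a sequence escaping in the $x_2$-direction, which Lemma \ref{pro:limit_process} converts into an attained interior extremum on a limit translator $M_\infty$ with the same asymptotic behavior; this is exactly the paper's argument. For the first assertion, however, your argument is genuinely different and more elementary. The paper rules out a tilted half-plane by sliding a grim reaper cylinder $\gp^{t,l}$ downward until it either touches the wing (excluded by the tangency principle and the asymptotics) or comes arbitrarily close along a divergent sequence (excluded again via Lemma \ref{pro:limit_process}); your proof instead uses only the flux identity
\[
\int_{R_L}H^2\,d\!\area=\int_{R_L}\langle \v,\H\rangle\,d\!\area=\oint_{\partial R_L}\langle \v,\eta\rangle\,ds\le \length(\partial R_L),
\]
valid for any compact piece of a translator since $\operatorname{div}_M(\v^{\top})=\langle\v,\H\rangle=|\v^{\perp}|^2=H^2$, together with the observation that a non-vertical asymptotic half-plane forces $|H|=|\langle\v,\xi\rangle|$ to be bounded below on the deep part of the wing, so the left side grows like $L^2$ against a right side of order $L$. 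This buys you a purely local, quantitative contradiction on the wing itself, needing neither the compactness lemma nor the tangency principle for this step, and using only the $C^1$-asymptotics (which control $\xi$, the area of $R_L$, and the length of $\partial R_L$). The only cosmetic point is that the identity is cleaner to justify directly via the tangential divergence theorem than via the remark about $\v$ generating isometries, but the identity itself is correct.
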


\begin{proof}
We follow the notation introduced in the last lemma. Assume to the contrary that the half-plane
$$\mathcal{H}_1=\big\{p+t\operatorname{w}_1:p\in\partial\mathcal{H}_1\,\,\text{and}\,\,t>0\big\}$$
is not parallel to the translating direction $\v$. Let us suppose at first that
the cosine of angle between the unit inward pointing normal $\operatorname{w}_1$ of $\partial\mathcal{H}_1$
and $\mbox{e}_1$ is positive. Consider the strip $S_{t_0}$ given by
$$S_{t_0}:=\left(t_0-{\pi}/{2},t_0+{\pi}/{2}\right)\times \real{} \times \real{}.$$
For sufficiently large $t_0$ this slab does not intersects the cylinder $\mathcal{C}$.
For fixed real numbers $t,l$ let $\gp^{t,l}$ be the grim reaper cylinder
$$\gp^{t,l}:=\big\{(x_1,x_2,l+\log\cos(x_1-t))\in\real{3}:|x_1-t|<{\pi}/{2}, x_2\in\real{}\big\}.$$
By our assumptions, as $\delta$ becomes larger the wing
$M_{\delta}:=M\cap Z^{+}_{1\delta}$
of $M$ is getting closer to $\mathcal{H}_1$. By the asymptotic behavior of $M$ to two half-planes, there exists $t_0,l_0\in \real{}$ large enough such
that $\gp^{t_0,l_0}$ does not intersect $M_{\delta}$. Then translate this grim reaper cylinder in the direction of $-\v$. Since $\mathcal{H}_1$ is not parallel to $\v$, after
some finite time $l_1$ either there will be a first interior point of contact between the surface $M_{\delta}$ and $\gp^{t_0,l_0-l_1}$ or there will exist
a sequence of points
$\{p_i=(p_{1i},p_{2i},p_{3i})\}_{i\in\natural{}}$ in the interior of $M_{\delta}$, with $\{p_{3i}\}_{i\in\natural{}}$ bounded and $\{p_{2i}\}_{i\in\natural{}}$
unbounded, such that
$$\lim_{i\to\infty}\operatorname{dist}(p_i,\gp^{t_0,l_0-l_1})= 0.$$
The first possibility contradicts the asymptotic behavior of $M$. So let us examine the second possibility. Consider the
sequence of surfaces $\{M_i\}_{i\in\natural{}}$ given by
$M_i=M+(0,-p_{2i},0),$
for any $i\in\natural{}$. By Lemma \ref{pro:limit_process}, after passing to a subsequence, $\{M_i\}_{i\in\natural{}}$
converges smoothly to a connected and properly embedded translator $M_{\infty}$ which has the same asymptotic behavior as $M$.
But now there exists an interior point of contact between $M_{\infty}$ and $\gp^{t_0,l_0-l_1}$, which is
absurd. Similarly we treat the case where the cosine of the angle between $\operatorname{w}_1$ and $\operatorname{e}_1$ is negative. Hence both half-planes must be
parallel to the translating direction $\v$.

Suppose now that the half-planes $\mathcal{H}_1$ and $\mathcal{H}_2$ are contained in the same vertical plane $\Pi$.
Without loss of generality we may assume that $\Pi=\Pi(0)$. Suppose to the contrary that the translator $M$ does not coincide with $\Pi$.
Observe that in this case the $x_1$-coordinate function attains a non-zero supremum or a non-zero infimum
along a sequence $\{p_i=(p_{1i},p_{2i},p_{3i})\}_{i\in\natural{}}$ in the interior of $M$, with $\{p_{3i}\}_{i\in\natural{}}$ bounded
and $\{p_{2i}\}_{i\in\natural{}}$ unbounded. Performing a limiting process as in the previous case we arrive to a contradiction.
Therefore, the $x_1$-coordinate function must be zero constant and thus $M$ must be planar.
\end{proof}

Another application of the above compactness result is the following strong maximum principle.
\begin{lemma}\label{max}
Let $M$ be a translating soliton as in our theorem and assume that the half-planes $\mathcal{H}_1$ and $\mathcal{H}_2$ are distinct.
Consider a portion $\Sigma$ of $M$ (not necessarily compact) with non-empty boundary $\partial{\Sigma}$ such that the
$x_3$-coordinate function of $\Sigma$ is bounded. Then the supremum and the infimum of the $x_1$-coordinate function of
$\Sigma$ are reached along the boundary of $\Sigma$ i.e., there exists no sequence $\{p_i\}_{i\in\natural{}}$ in the interior
of $\Sigma$ such that ${\lim}_{i\to\infty}\dist(p_i,\partial\Sigma)>0$ and
${\lim}_{i\to\infty}x_1(p_i)={\sup}_{\Sigma}x_1$ or ${\lim}_{i\to\infty}x_1(p_i)={\inf}_{\Sigma}x_1.$
\end{lemma}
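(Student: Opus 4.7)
I would argue by contradiction, combining Lemma \ref{pro:limit_process} with the interior tangency principle of Theorem \ref{thm:tangency_principle}(a). Suppose there exists a sequence $\{p_i\}_{i \in \natural{}}$ in the interior of $\Sigma$ with $\lim_{i\to\infty} \dist(p_i, \partial\Sigma) > 0$ and $x_1(p_i) \to a := \sup_\Sigma x_1$; the case of the infimum is entirely symmetric. Because $\Sigma$ has bounded $x_3$-coordinate and $M$, being $C^1$-asymptotic outside a cylinder to two half-planes parallel to $\v$, has bounded $x_1$-coordinate, any escape of $\{p_i\}$ to infinity must occur through the $x_2$-component.

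The plan is to set $b_i := -p_{2i}$ and apply Lemma \ref{pro:limit_process} to $M_i := M + (0,b_i,0)$. This produces, along a subsequence, a smooth multiplicity-one limit $M_\infty$ which is a connected properly embedded translator with the same asymptotic behavior as $M$. The translated points $q_i := p_i + (0,b_i,0) = (p_{1i},0,p_{3i})$ lie in a compact subset of $\real{3}$, so a further subsequence converges to some $q_\infty \in M_\infty$ with $x_1(q_\infty) = a$. The smooth multiplicity-one convergence then lets one realize a fixed intrinsic neighborhood of $q_\infty$ in $M_\infty$ as the limit of small intrinsic neighborhoods of $q_i$ in $M_i$; translating back by $(0,-b_i,0)$, these neighborhoods come from small intrinsic neighborhoods of $p_i$ in $M$, which lie inside $\Sigma$ for large $i$ thanks to $\lim \dist(p_i, \partial\Sigma) > 0$. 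On such neighborhoods $x_1 \le a$, and passing to the limit yields $x_1 \le a$ on a neighborhood of $q_\infty$ in $M_\infty$.

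Thus $x_1|_{M_\infty}$ attains a local maximum at the interior point $q_\infty$. The critical-point condition forces $e_1$ to be normal to $M_\infty$ at $q_\infty$, so $T_{q_\infty} M_\infty$ coincides with the tangent plane of the vertical plane $\Pi := \{x_1 = a\}$, which is itself a translator. Locally $M_\infty$ is contained in the half-space $\{x_1 \le a\}$ and is tangent to $\Pi$ at $q_\infty$, so by Theorem \ref{thm:tangency_principle}(a) we obtain $M_\infty \equiv \Pi$. This is incompatible with $M_\infty$ inheriting the asymptotic behavior of $M$ to the two half-planes $\mathcal{H}_1, \mathcal{H}_2$, which by the standing hypothesis of the lemma are distinct and hence not contained in a common plane, giving the desired contradiction.

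The main technical point I expect to have to justify carefully is the transfer of the bound $x_1 \le a$ from the shifted surfaces $M_i$ to the limit $M_\infty$ in a neighborhood of $q_\infty$. This requires handling intrinsic versus extrinsic distance when propagating the condition $\dist(p_i, \partial\Sigma) \ge c > 0$ through the graphical, smooth, multiplicity-one convergence provided by Lemma \ref{pro:limit_process}; on sufficiently small tangent discs around $q_\infty$ the two notions of distance are uniformly comparable, which is precisely what keeps the pulled-back neighborhoods inside $\Sigma$ for $i$ large.
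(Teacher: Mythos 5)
Your proposal is correct and follows essentially the same route as the paper: translate by $(0,-p_{2i},0)$, invoke Lemma \ref{pro:limit_process} to extract a smooth multiplicity-one limit $M_\infty$ with the same asymptotic behavior, observe that $x_1$ then attains an interior local extremum on $M_\infty$, and rule this out via the interior tangency principle against the plane $\{x_1=a\}$. You merely make explicit two points the paper leaves terse --- the transfer of the one-sided bound $x_1\le a$ to a neighborhood of the limit point, and the reduction of the bounded-$p_{2i}$ case to the interior-maximum case --- both of which are handled correctly.
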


\begin{proof}
Recall that from the above lemma the half-planes $\mathcal{H}_1$ and $\mathcal{H}_2$ must be parallel to each other
and to the direction $\v$ of translation.
From our assumptions the $x_1$-coordinate function of the surface $M$ is bounded. Moreover,
the extrema of $x_1$ cannot be attained at an interior point of $\Sigma$, since otherwise from the tangency principle
$\Sigma$ should be a plane. This would imply that $M$ is a plane, something that contradicts
the asymptotic assumptions. So, let us suppose that there exists a sequence of points $\{p_i=(p_{1i},p_{2i},p_{3i})\}_{i\in\natural{}}$
in the interior of $\Sigma$ such that $\lim_{i\to\infty} \mbox{dist}(p_i,\partial{\Sigma})>0$ and $x_1(p_i)$ is tending to its
supremum or infimum. Then, consider the
sequence of surfaces $\{M_i\}_{i\in\natural{}}$ given by
$M_i=M+(0,-p_{2i},0),$
for any $i\in\natural{}$. By Lemma \ref{pro:limit_process}, after passing to a subsequence, $\{M_i\}_{i\in\natural{}}$
converges smoothly to a connected and properly embedded translator $M_{\infty}$ which has the same asymptotic behavior as $M$.
But now there exists a point in $M_{\infty}$ where its $x_1$-coordinate function reaches its local extremum, which is
absurd.
\end{proof}

\begin{remark}
The $x_1$-coordinate function of $M$ satisfies the partial differential equation
$\Delta x_1+\langle\nabla x_1,\nabla x_3\rangle=0.$
However, Lemma \ref{max} is not a direct consequence of the strong maximum principle for elliptic
PDE's because in general $\Sigma$ is not bounded.
\end{remark}

%
%

\section {Proof of the theorem}\label{sec:proof_of_the_main_theorem}
We have to deal only with the case where $\mathcal{H}_1$ and $\mathcal{H}_2$ are distinct and parallel to $\v$.
We can arrange
the coordinates such that $\v=(0,0,1)$ and such that the $x_2$-axis is the axis of rotation of our cylinder
$$\mathcal{C}=\big\{(x_1,x_2,x_3)\in\real{3}:x^2_1+x^2_3\le r^2\big\}.$$
Following the setting in \cite{fra14} let us define the family of planes $\{\Pi(t)\}_{t\in\real{}}$,
given by
$$\Pi(t):=\big\{(x_1,x_2,x_3)\in\real{3}:x_1=t\big\}.$$
Moreover, given a subset $A$ of $\real{3}$, for any $t\in \real{}$ we define the sets
\begin{eqnarray*}
A_+(t)&:=&\big\{(x_1,x_2,x_3)\in A:x_1\ge t\big\},\\
A_-(t)&:=&\big\{(x_1,x_2,x_3)\in A:x_1\le t\big\},\\
A^+(t)&:=&\big\{(x_1,x_2,x_3)\in A:x_3\ge t\big\},\\
A^-(t)&:=&\big\{(x_1,x_2,x_3)\in A:x_3\le t\big\},\\
A^*_+(t)&:=&\big\{(2t-x_1,x_2,x_3)\in\real{3}:(x_1,x_2,x_3)\in A_+(t)\big\},\\
A^*_-(t)&:=&\big\{(2t-x_1,x_2,x_3)\in\real{3}:(x_1,x_2,x_3)\in A_-(t)\big\}.
\end{eqnarray*}

Note that $A^*_+(t)$ and $A^*_-(t)$ are the image of $A_+(t)$ and $A_-(t)$ by the reflection respect to the plane $\Pi(t)$.

{\bf STEP 1:} We claim that both parts of $M$ outside the cylinder point
in the direction of $\v$. We argue indirectly. Let us suppose that one part of
$M-\mathcal{C}$ is asymptotic to
$$\mathcal{H}_1=\big\{(x_1,x_2,x_3)\in\real{3}:x_3> r_1>0,\, x_1=-\delta\big\}$$
and the other part is asymptotic to
$$\mathcal{H}_2=\big\{(x_1,x_2,x_3)\in\real{3}:x_3< r_2<0,\, x_1=+\delta\big\},$$
for some $\delta>0$ (see Fig. \ref{comp-left}).
\begin{figure}[h]
\includegraphics[scale=.07]{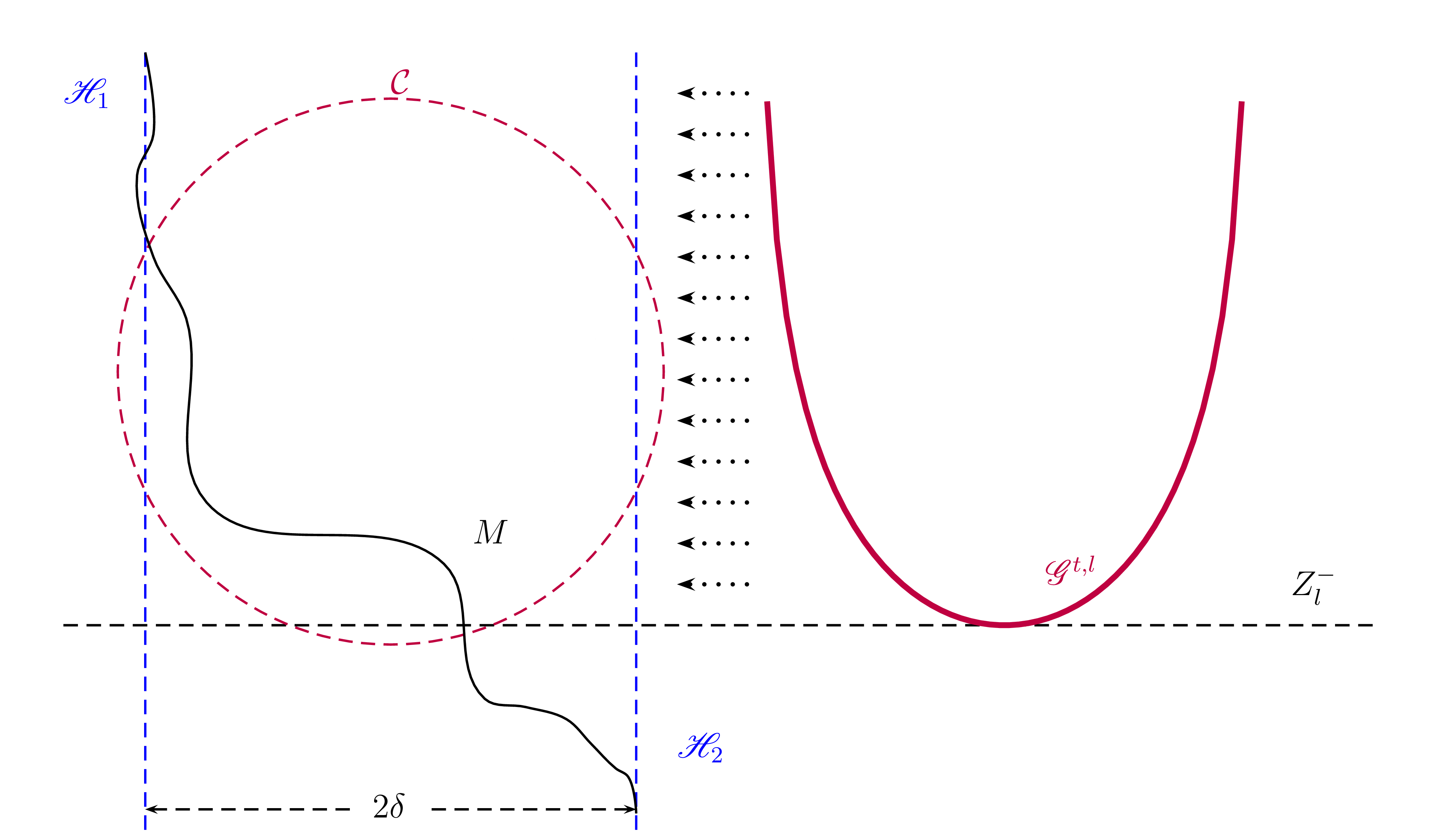}\caption{Comparison with a grim reaper cylinder}\label{comp-left}
\end{figure}
Fix real numbers $t,l$ and let $\gp^{t,l}$ be the grim reaper cylinder
$$\gp^{t,l}:=\big\{(x_1,x_2,l+\log\cos(x_1-t))\in\real{3}:|x_1-t|<{\pi}/{2}, x_2\in\real{}\big\}.$$
The idea is to obtain a contradiction by comparing the surface $M$ with an appropriate grim reaper cylinder
$\gp^{t,l}$. Let us start with the grim reaper cylinder $\gp^{{\pi}/{2}+\delta,0}.$ Note that
$\gp^{{\pi}/{2}+\delta,0}$ lies outside the strip $(-\delta,\delta)\times\real{2}$ and it is
asymptotic to two half-planes contained in $\Pi(\delta)$ and $\Pi(\delta+\pi)$.
\begin{figure}[h]
\includegraphics[scale=.09]{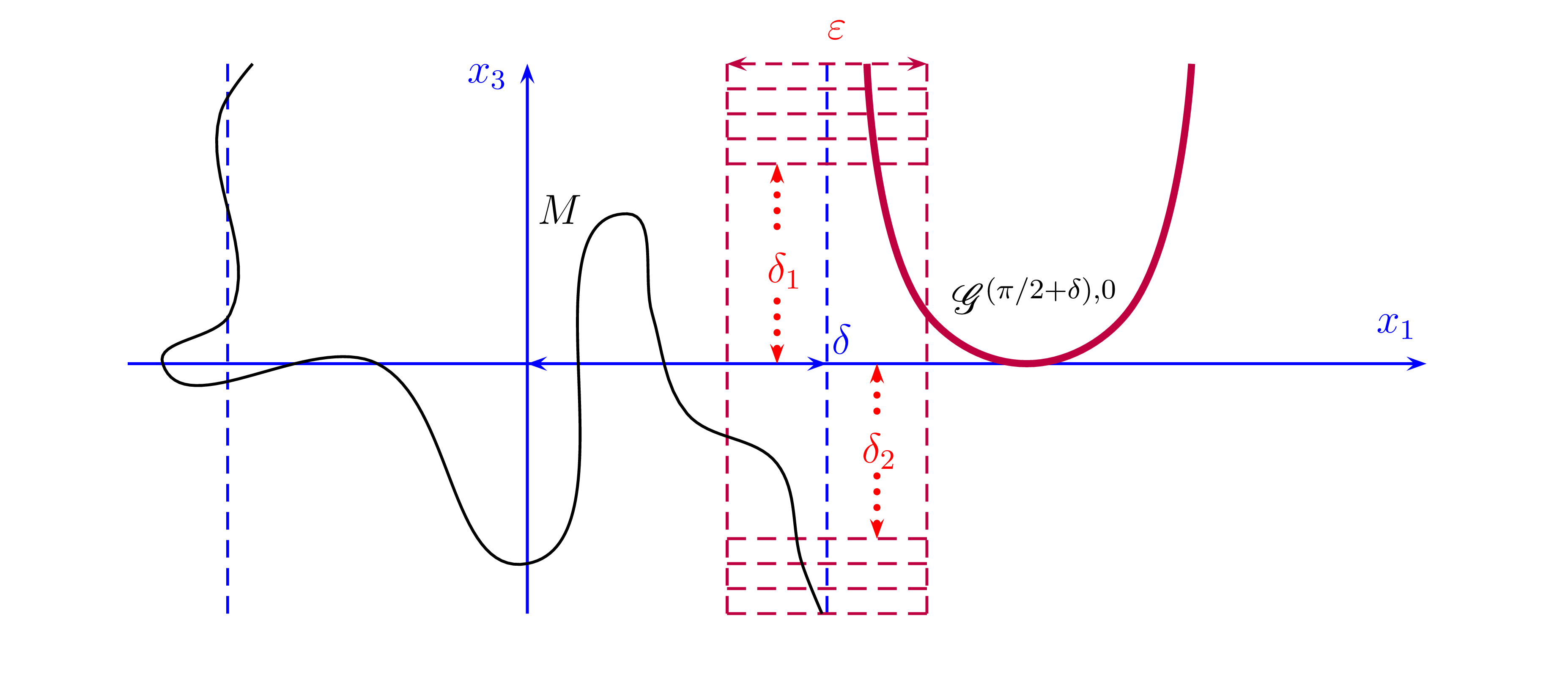}\caption{Comparison with a grim reaper cylinder}\label{comp-left-1}
\end{figure} 

Fix $\varepsilon\in(0,2\delta)$.
Because outside a cylinder the grim reaper cylinder $\gp^{{\pi}/{2}+\delta,0}$ is asymptotic to two half-planes, there exists
$\delta_1>0$, depending on $\varepsilon$, such that $\gp^{{\pi}/{2}+\delta,0}\cap Z^+_{\delta_1}$ is inside the region
$$(\delta,\delta+\varepsilon/2)\times\real{}\times (\delta_1,+\infty).$$
Moreover, there exists $\delta_2>0$, depending on $\varepsilon$, such that $M\cap Z^-_{-\delta_2}$ is inside the region
$$(\delta-\varepsilon/2,\delta+\varepsilon/2)\times\real{}\times (-\infty,-\delta_2).$$
Consider now the grim reaper cylinder $\gp^{{\pi}/{2}+\delta+t,-\delta_1-\delta_2-1}$
and choose $t$ large enough so that
$$\gp^{{\pi}/{2}+\delta+t,-\delta_1-\delta_2-1}\cap M=\emptyset.$$
Translate the above grim reaper cylinder in the direction of $(-1,0,0)$. Since $\varepsilon\in (0,2\delta)$, we see that after some finite time $t_0$
either there will be a first interior point of contact between $M$ and $\gp^{{\pi}/{2}+\delta+t_0,-\delta_1-\delta_2-1}$ or there will exist a sequence
$\{p_i=(p_{1i},p_{2i},p_{3i})\}_{i\in\natural{}}$ of points in $M$, with $\{p_{3i}\}_{i\in\natural{}}$ bounded and $\{p_{2i}\}_{i\in\natural{}}$
unbounded, such that
$$\lim_{i\to\infty}\operatorname{dist}(p_i,\gp^{{\pi}/{2}+\delta+t_0,-\delta_1-\delta_2-1})= 0.$$
As in Lemma \ref{max}, we deduce that both cases contradict the asymptotic behavior of $M$.
Therefore, both parts of $M-\mathcal{C}$ must point in the direction of $\v$.

{\bf STEP 2:} We claim now that $M$ lies in the slab
$S:=\big(-\delta,+\delta\big)\times\real{2}.$
Assume at first that $\lambda:={\sup}_Mx_1>\delta.$ Consider now the surface (see Fig. \ref{comp-plane})
$$\Sigma:=\{(x_1,x_2,x_3)\in M:x_1\ge\delta/2+\lambda/2\}.$$
\begin{figure}[h]
\includegraphics[scale=.1]{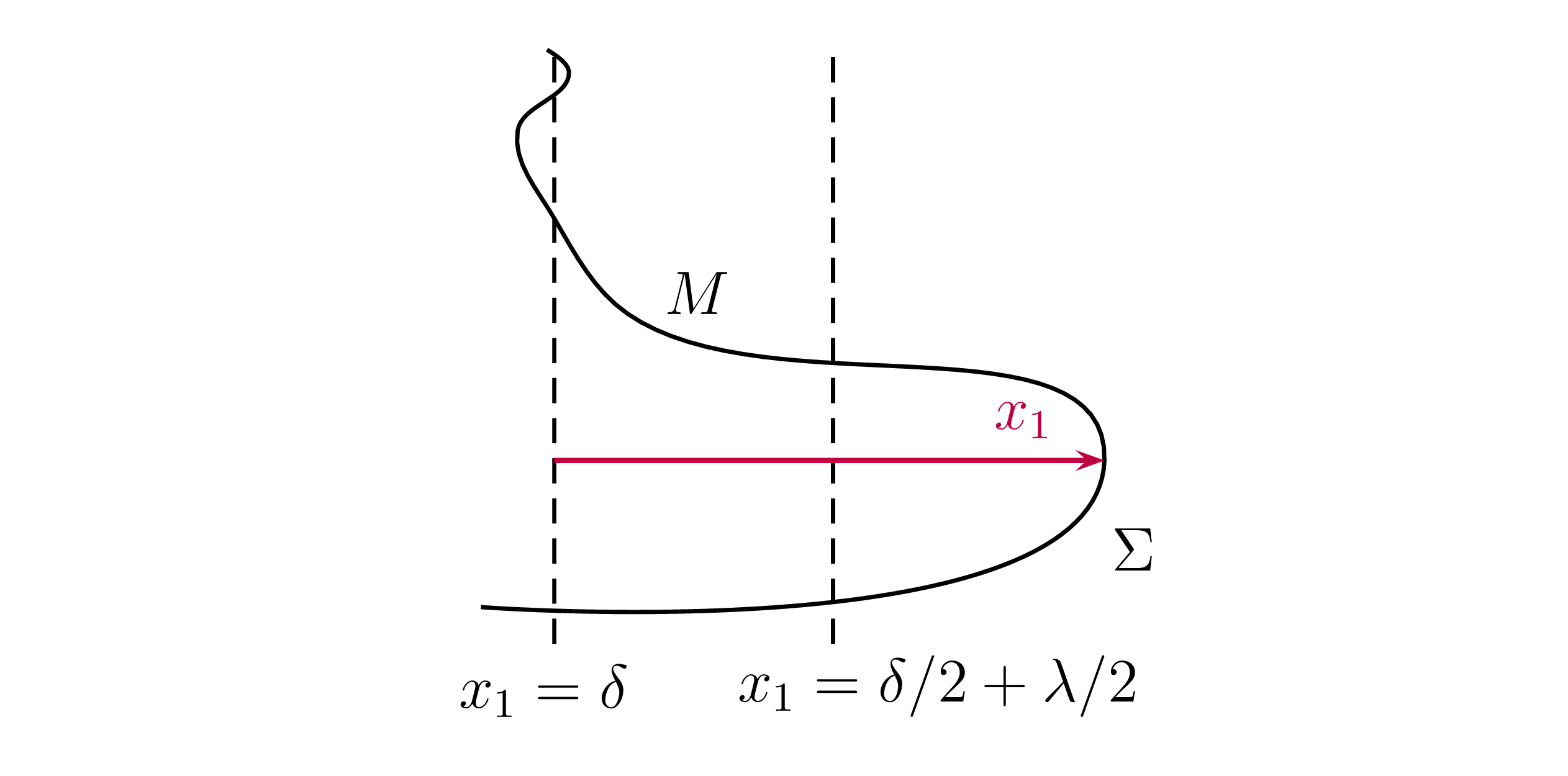}\caption{A slice of $\Sigma$}\label{comp-plane}
\end{figure}
The asymptotic assumptions on $M$ imply that the $x_3$-coordinate of $\Sigma$
is bounded. Therefore, due to Lemma \ref{max},
$${\sup}_{\Sigma}x_1={\sup}_{\partial{\Sigma}}x_1.$$
But since
$$\partial\Sigma\subset\{(x_1,x_2,x_3)\in\real{3}:x_1=\delta/2+\lambda/2\},$$
we have that
$$x_1(p)={\delta}/{2}+{\lambda}/{2}<\lambda={\sup}_\Sigma{x_1},$$
for any $p\in\partial\Sigma$, which is absurd. Thus $\sup_{M}x_1\le\delta$. Observe that if equality holds, then a contradiction is reached comparing $M$
and the plane $\Pi(\delta)$ using the tangency principle. Hence $\sup_{M}x_1<\delta$. Similarly, we can prove that ${\inf}_{M} x_1>-\delta.$
Consequently, $M$ should lie inside the slab $S$.

{\bf STEP 3:} Using the same arguments we will prove now that $2\delta=\pi$. Indeed, suppose at first that $2\delta>\pi$.
We can then place a grim reaper cylinder $\gp^{0,l}$ inside the slab $S$, by taking $l$ sufficiently large, so that $\gp^{0,l}\cap M=\emptyset$
(see Fig. \ref{comp-1}).
\begin{figure}[h]
\includegraphics[scale=.09]{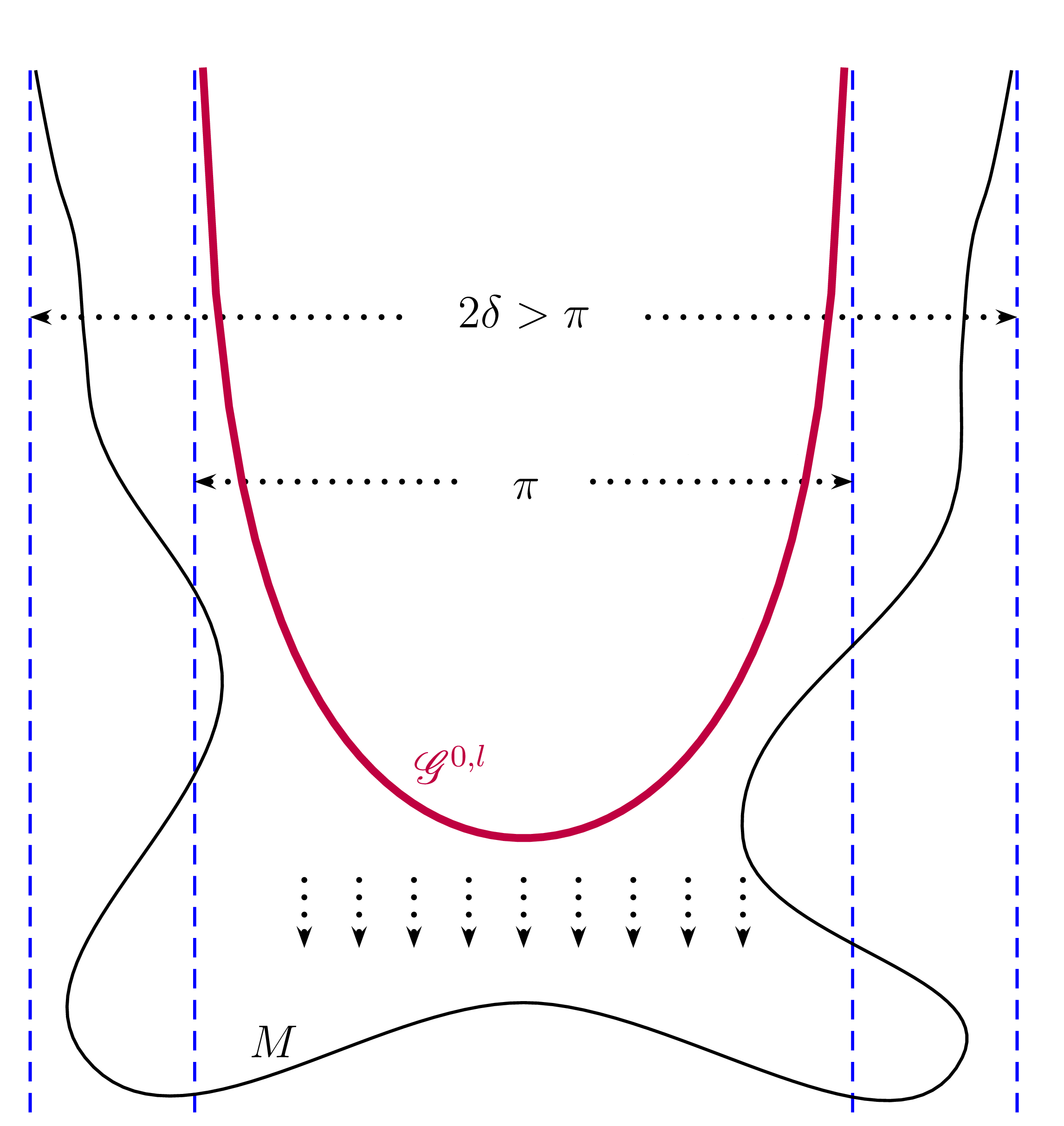}\caption{Comparison with a grim reaper cylinder from inside}\label{comp-1}
\end{figure}
Consider now the set
$$\mathscr{A}:=\big\{l> 0: M\cap \gp^{0,l} = \emptyset  \}.$$
Let $l_0:=\inf\mathscr{A}$.
Assume at first that $l_0\notin\mathscr{A}$. Because
$M\cap\gp^{0,l_0}\neq\emptyset,$ it follows that there is an interior point of contact between $M$ and $\gp^{0,l_0}$.
But then $M\equiv\gp^{0,l_0}$ which leads to a contradiction with the asymptotic assumptions on $M$.
Let us treat now the case where $l_0\in\mathscr{A}$. In this case
$\dist\big\{M,\gp^{0,l_0}\big\}=0.$
Therefore, there exists a sequence of points
$\big\{p_i=(p_{1i},p_{2i},p_{3i})\}_{i\in\natural{}}$ in $M$
such that
$$\lim_{i\to\infty}p_{1i}=p_{1\infty}\in\real{},\,\lim_{i\to\infty}p_{2i}=\infty,\,\lim_{i\to\infty}p_{3i}=p_{3\infty}\in\real{}$$
and
$$\lim_{i\to\infty}\operatorname{dist}\big(p_i,\gp^{0,l_0}\big)=0.$$
Consider the sequence
$$\big\{M_i=M+(0,-p_{2i},0)\big\}_{i\in\natural{}}.$$
By Lemma \ref{pro:limit_process} we know that after passing to a subsequence,
$\{M_i\}_{i\in\natural{}}$ converges to a connected properly embedded translator $M_{\infty}$ which has the same asymptotic
behavior as $M$. On the other hand $M_{\infty}$ has an interior point of contact
with $\gp^{0,l_0}$ and thus they must coincide. But this contradicts again the assumption on the asymptotic
behavior of $M$. Thus $2\delta$ must be less or equal than $\pi$. We exclude also the case where
$2\delta<\pi$ by comparing $M$ with a grim reaper cylinder from outside (see Fig. \ref{comp-2}).
\begin{figure}[h]
\includegraphics[scale=.07]{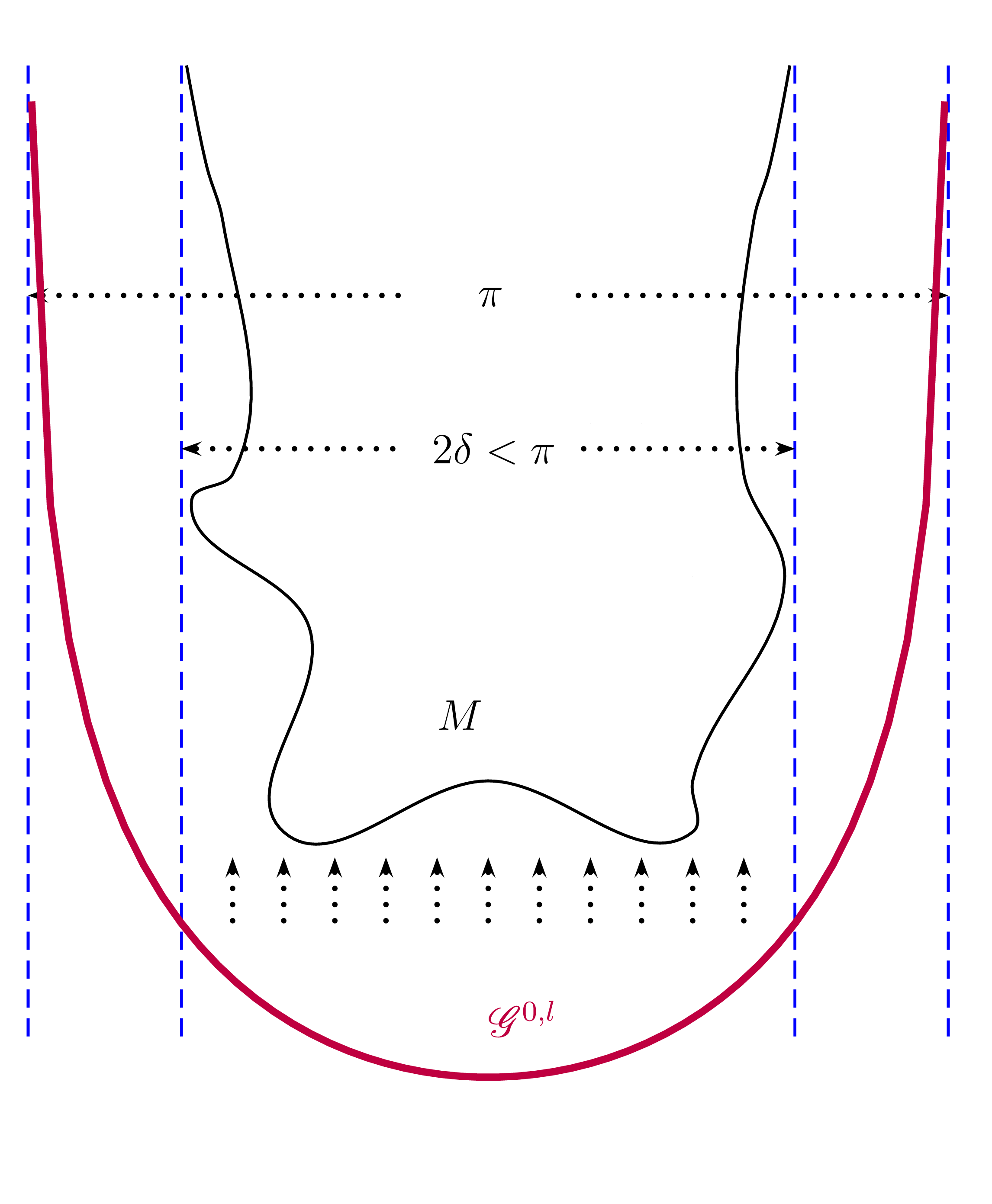}\caption{Comparison with a grim reaper cylinder from outside}\label{comp-2}
\end{figure}
Consequently, $2\delta=\pi.$

{\bf STEP 4:} We will prove here two auxiliary results that will be very useful in the rest of the proof.
\begin{claim}\label{wm}
The inequality
$$-\pi/2<{\inf}_{\partial M^-(t)}x_1\le{\inf}_{M^-(t)}x_1
\le{\sup}_{M^-(t)}x_1\le{\sup}_{\partial M^-(t)}x_1<\pi/2,$$
holds for any any real number $t$ such that $M^{-}(t)\neq\emptyset$.
\end{claim}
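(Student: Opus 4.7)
The statement splits into two pieces: the outer strict inequalities $-\pi/2<\inf_{\partial M^-(t)} x_1$ and $\sup_{\partial M^-(t)} x_1<\pi/2$, and the two inner inequalities which, combined with the trivial reverse-inclusion bounds, amount to saying that the extrema of $x_1$ on $M^-(t)$ are realized on its boundary $\partial M^-(t)=M\cap\{x_3=t\}$.

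The outer strict bounds come essentially for free from Steps 2--3: the slab argument in Step 2, once $\delta=\pi/2$ has been identified in Step 3, shows that $M$ lies in the open slab $(-\pi/2,\pi/2)\times\real{2}$, and in fact yields the strict bounds $-\pi/2<\inf_M x_1$ and $\sup_M x_1<\pi/2$ (the tangency principle rules out the equality case at the slab walls). Since $\partial M^-(t)\subseteq M$, the same strict inequalities descend to $\partial M^-(t)$.

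For the inner inequalities I would apply Lemma \ref{max} with $\Sigma := M^-(t)$, after checking its two hypotheses. First, $\partial\Sigma\neq\emptyset$: by assumption $M^-(t)\neq\emptyset$, and by Step 1 both wings of $M$ go off to $x_3=+\infty$, so $M^-(t)\neq M$; connectedness of $M$ then forces the level set $M\cap\{x_3=t\}$ to be non-empty. Second, $x_3$ is bounded on $\Sigma$: the upper bound $x_3\le t$ is built in, while a uniform lower bound on $M$ is available since $M\cap \mathcal{C}\subset\{|x_3|\le r\}$ and both wings lie in the upper half-space $\{x_3>r_0\}$ by Step 1 together with the $C^{1}$-asymptotic condition.

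Lemma \ref{max} then prevents any interior minimizing or maximizing sequence from staying a positive distance from $\partial\Sigma$; by continuity of $x_1$ this upgrades to the equalities $\inf_{M^-(t)}x_1=\inf_{\partial M^-(t)}x_1$ and $\sup_{M^-(t)}x_1=\sup_{\partial M^-(t)}x_1$, which give the two inner inequalities. Chaining them with the trivial $\inf_{M^-(t)}x_1\le\sup_{M^-(t)}x_1$ and the outer strict bounds completes the chain. I do not expect any substantial obstacle: the heavy lifting has been done in Steps 1--3 (location of $M$ in the slab, both wings pointing upward) and in Lemma \ref{max} (the blow-down/compactness argument substituting for the classical strong maximum principle in the non-compact setting); the only item requiring care is the verification that $\partial\Sigma$ is non-empty and that $x_3$ is bounded below on $M$.
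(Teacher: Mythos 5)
Your proof is correct and follows essentially the same route as the paper: the substantive content of the claim --- that the extrema of $x_1$ over $M^-(t)$ are attained along $\partial M^-(t)$ --- is obtained, exactly as in the paper, by applying Lemma \ref{max} to $\Sigma=M^-(t)$ after checking that $\partial\Sigma\neq\emptyset$ and that $x_3$ is bounded on $\Sigma$, which you do. The only divergence is minor: for the outer strict bounds the paper reruns the blow-down argument of Lemma \ref{pro:limit_process} to show $\operatorname{dist}\big(\partial M^-(t),\Pi(\pm\pi/2)\big)>0$, whereas you invoke the strict inequalities $-\pi/2<\inf_M x_1$ and $\sup_M x_1<\pi/2$ already recorded at the end of Steps 2--3; both are legitimate, as they rest on the same compactness machinery.
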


{\it Proof of the claim.} Recall that
$$M^-(t)=\{(x_1,x_2,x_3)\in M:x_3\le t\}.$$
Hence, from Lemma 3.2, we have that
$$\dist\left(M^-(t),\Pi({\pi}/{2})\right)=\dist\left(\partial{M^-(t)},\Pi({\pi}/{2})\right).$$
Suppose now to the contrary that
$$\dist\left(\partial{M^-(t)},\Pi({\pi}/{2})\right)=0.$$
Then, there exists a sequence $\{p_i=(p_{1i},p_{2i},t)\}_{i\in\natural{}}$ of points of $\partial{M^-(t)}$ such that
$$\lim_{i\to\infty}p_{1i}=\pi/2\quad \text{and}\quad \lim_{i\to\infty}p_{2i}=\infty.$$
Consider the sequence
of surfaces $\{M_i:=M+(0,-p_{2i},0)\}_{i\in\natural{}}$. From Lemma 3.1 we know that $\{M_i\}_{i\in\natural{}}$
converges to a connected properly embedded translator $M_{\infty}$ which has the same asymptotic behavior
as $M$. On the other hand, there is an interior point of contact between $M_{\infty}$ and $\Pi(\pi/2)$, which is
a contradiction. Thus,
$$\dist\left(\partial{M^-(t)},\Pi({\pi}/{2})\right)>0.$$
which implies that
${\sup}_{M^-(t)}x_1<\pi/2.$
In the same way, we can prove that
${\inf}_{M^-(t)}x_1>-\pi/2.$
This completes the proof of the claim.

\begin{claim}\label{graph-1}
There exists a sufficiently large number $t$ such that the parts of $M^+(t)$ are graphs over the $x_1x_2$-plane,
and there exists a sufficiently small $\delta>0$ such that $M_+({\pi}/{2}-\delta)$ is a graph over the $x_1x_2$-plane.
\end{claim}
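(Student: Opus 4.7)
My plan for Claim~\ref{graph-1} is to reduce both assertions to a monotonicity statement for the asymptotic graph function $\varphi_2$ on the right wing (the left wing being analogous), and then to derive this monotonicity from the translator PDE in graph form.

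First, by Claim~\ref{wm} applied at a height exceeding the $x_3$-extent of $\mathcal{C}$, one has $\sup_{M \cap \mathcal{C}} x_1 < \pi/2$; hence for $\delta$ small, $M_+(\pi/2 - \delta)$ is disjoint from $\mathcal{C}$, and since $M_1$ has $x_1$ near $-\pi/2$ it lies in fact entirely in the right wing $M_2$. The same argument shows that for $t$ large each connected component of $M^+(t)$ lies in one of the asymptotic wings. By Definition~\ref{asymp-1} and Step~2, $M_2$ is parametrized as $M_2 = \{(\pi/2 + \varphi_2(x_2, x_3), x_2, x_3) : (x_2, x_3) \in \mathcal{H}_2\}$ with $\varphi_2 < 0$ and $|\varphi_2|, |D\varphi_2| \to 0$ as $x_3 \to \infty$. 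Since the $x_2$-coordinate is preserved under the projection onto the $x_1x_2$-plane, the desired graph property over that plane is equivalent to strict monotonicity of $x_3 \mapsto \varphi_2(x_2, x_3)$ for every fixed $x_2$ on the relevant subdomain of $\mathcal{H}_2$.

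To establish this monotonicity I would analyse the translator equation for the graph $x_1 = \pi/2 + \varphi_2(x_2, x_3)$; a direct computation (consistent with the grim reaper's asymptotic expansion $\varphi_2 \sim -e^{-x_3}$) shows that it linearises around $\varphi_2 \equiv 0$ to
\[
\partial_{x_2}^2 \varphi_2 + \partial_{x_3}^2 \varphi_2 + \partial_{x_3}\varphi_2 = 0.
\]
Bounded solutions of this linear elliptic equation that decay as $x_3 \to \infty$ admit a Fourier decomposition in $x_2$, and the slowest-decaying mode is the $x_2$-independent $ce^{-x_3}$; the sign condition $\varphi_2 < 0$ forces $c < 0$, whence $\partial_{x_3}\varphi_2 \sim -ce^{-x_3} > 0$ at leading order. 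Combined with standard elliptic a~priori estimates for the full nonlinear equation (using $C^1$-smallness of $\varphi_2$ to control the nonlinear corrections), this yields $\partial_{x_3}\varphi_2 > 0$ uniformly on $\{x_3 \ge T\}$ for $T$ large. Taking $\delta$ small enough (via Claim~\ref{wm}) that $M_+(\pi/2-\delta) \subset \{x_3 \ge T\}$, and $t \ge T$, monotonicity yields the required injectivity, hence the graph property.

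The main obstacle is upgrading the leading-order asymptotic analysis to a uniform rigorous estimate on $\partial_{x_3}\varphi_2$. A cleaner alternative, in the spirit of the other proofs in the paper, is a contradiction argument based on Lemma~\ref{pro:limit_process}: if monotonicity failed there would exist pairs $p_n \ne q_n$ in $M_2$ with coincident $(x_1, x_2)$-projections and $x_3(p_n), x_3(q_n) \to \infty$ (the divergence of $x_3$ being forced by Claim~\ref{wm}); after $x_2$-translating, the limit translator furnished by the lemma---compared with a suitably placed grim reaper cylinder via the tangency principle---should rule out the defect. The subtle point in this approach is that the offending pairs escape to infinity in the $x_3$-direction, so the defect survives in the limit only through the asymptotic parametrisation and not as a finite forbidden configuration.
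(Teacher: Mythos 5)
There is a genuine gap, and you have in fact located it yourself. Your reduction of both assertions to the statement ``the unit normal of the wing is nowhere horizontal'' (equivalently $\partial_{x_3}\varphi_2>0$ on the relevant region) is the right target, and your use of Claim \ref{wm} to push $M_+(\pi/2-\delta)$ above any prescribed height and into the wing $M_2$ is fine. But neither of your two routes to the monotonicity actually closes. The linearization/Fourier argument is only a leading-order heuristic: you have no control on whether the $k=0$ mode coefficient is the dominant contribution to $\partial_{x_3}\varphi_2$ pointwise, the function is not periodic in $x_2$ so the mode decomposition is not even well-defined without further decay hypotheses, and you concede that upgrading it to a uniform estimate is ``the main obstacle.'' The compactness alternative fails for the reason you yourself flag: Lemma \ref{pro:limit_process} only translates in $x_2$, while a point with horizontal normal at $x_1\ge\pi/2-\delta$ necessarily has large $x_3$ (by Claim \ref{wm}), so the offending points escape upward; a blow-down in $x_3$ converges to the vertical plane $\Pi(\pi/2)$, whose normal \emph{is} horizontal, so no contradiction survives in the limit. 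The underlying difficulty is that over the flat plane $\Pi(\pi/2)$ the unperturbed normal is $(1,0,0)$, so $C^1$-smallness of $\varphi_2$ gives no information whatsoever about the sign of $\partial_{x_3}\varphi_2$.

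The paper avoids this entirely by changing the reference surface: it rewrites each wing as a \emph{normal graph over the grim reaper cylinder}, $f=u+\varphi\xi_u$ with $|\varphi|,|D\varphi|<\varepsilon$ on $(\pi/2-\delta,\pi/2)\times\real{}$, and computes $\langle\xi,\v\rangle$ explicitly. The point of formula \eqref{gaussmap} is that the dangerous term $\langle u_{x_1},\v\rangle=\tan x_1$ enters multiplied by $\varphi_{x_1}\cos^2x_1$, so the whole expression factors as $-\cos x_1\,(1+\varphi\cos x_1+\varphi_{x_1}\sin x_1)/\sqrt{\cdots}$, which is strictly negative as soon as $\varepsilon<1/2$ --- i.e. the sign you need falls out of $C^1$-smallness alone once the comparison surface is the grim reaper rather than the plane. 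Two further ingredients are then still required and are absent from your sketch: $M_+(\pi/2-\delta)$ must be shown to be \emph{connected} (the paper does this with Lemma \ref{max}: a bounded-height component would have its $x_1$-extrema on the boundary plane $\Pi(\pi/2-\delta)$ and hence be planar), and its projection must be shown to be the simply connected strip $T_\delta$, so that the local graph property globalizes. If you want to salvage your parametrization, the honest fix is precisely the paper's: pass from the graph over $\Pi(\pi/2)$ to the normal graph over $\gp$ and read off the sign of $\langle\xi,\v\rangle$ from \eqref{gaussmap}.
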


{\it Proof of the claim.} From STEP 3 we know that $M$ lies inside the
slab
$$S=\left(-{\pi}/{2},{\pi}/{2}\right)\times \real{2}.$$
Since $\gp$ and $M-\mathcal{C}$ are $C^1$-asymptotic to $\Pi(\frac{\pi}{2})$, we can
represent each wing of $M-\mathcal{C}$ as a graph over $\gp$. Fix a sufficiently small positive
number $\varepsilon.$ Then, there exists $\delta>0$ such that the interior of the
right wing $M_+(\pi/2-\delta)$ of $M-\mathcal{C}$ can be parametrized by a smooth map
$f:T_{\delta}:=(\pi/2-\delta,\pi/2)\times\real{}\to\real{3}$ given by
$$f=u+\varphi\xi_{u},$$
where the map
$u(x_1,x_2)=(x_1,x_2,-\log\cos x_1)$
describes the position vector of $\gp$,
$\xi_{u}(x_1,x_2)=(\sin x_1,0,-\cos x_1)$
is the outer unit normal of $u$ and $\varphi:(\pi/2-\delta,\pi/2)\times\real{}\to\real{}$ is a smooth function such
that
$${\sup}_{T_{\delta}}|\varphi|<\varepsilon\quad\text{and}\quad{\sup}_{T_{\delta}}|D\varphi|<\varepsilon.$$
A straightforward computation shows that the outer unit normal $\xi$ of $f$ is given by the formula
\begin{equation}\label{gm}
\xi=\frac{(1+\varphi\cos x_1)\xi_{u}-(1+\varphi\cos x_1)\varphi_{x_2}u_{x_2}-\varphi_{x_1}\cos^2 x_1 u_{x_1}}
{\sqrt{(1+\varphi\cos x_1)^2(1+\varphi^2_{x_2})+\varphi^2_{x_1}\cos^2x_1}}.
\end{equation}
Because $f$ is a translator, we deduce that its mean curvature is
\begin{equation}\label{gaussmap}
H=-\langle\xi,\v\rangle=\frac{\cos x_1(1+\varphi\cos x_1+\varphi_{x_1}\sin x_1)}
{\sqrt{(1+\varphi\cos x_1)^2(1+\varphi^2_{x_2})+\varphi^2_{x_1}\cos^2 x_1}}.
\end{equation}
Consequently, $\langle\xi,\v\rangle<0$. Thus, each point of $M_+(\pi/2-\delta)$
has an open neighborhood that can be represented as a graph over the $x_1x_2$-plane.
Due to Lemma \ref{max}, the surface $M_+(\pi/2-\delta)$ must be connected. Indeed, assume to the contrary that $M_+(\pi/2-\delta)$
has more than one connected component. Let $\Sigma$ be a connected component different from the one whose
$x_3$-coordinate function is not bounded (there is at least one by assumption). Then due to Lemma \ref{max} the
infimum and the supremum of the $x_1$-coordinate function of $\Sigma$ are reached along the boundary, that is, $\Sigma$
is an open piece of the plane $\Pi(\pi/2-\delta)$, so the whole surface $M$ must coincide with this plane, which is
a contradiction. Moreover, its projection to the $x_1x_2$-plane must be the simply connected set $T_{\delta}$. Thus, $M_+(\pi/2-\delta)$
must be a global graph over the subset $T_{\delta}$ of the $x_1x_2$-plane. Similarly, we prove that also the left hand side wing of
$M-\mathcal{C}$ is graphical. This completes the proof of the claim because by the hypothesis on the asymptotic behavior
of $M$, there exists a sufficiently large number $t$ such that $M^+(t)\subset M_{-}(-\pi/2+\delta) \cup M_+(\pi/2-\delta)$.

{\bf STEP 5:} We shall prove now that $M$ is symmetric with respect to
$$\Pi(0)=\big\{(x_1,x_2,x_3)\in\real{3}:x_1=0\big\}$$
and that $M$ is a bi-graph over this plane.
The main tool used in the proof is the method of moving planes of Alexandrov (see \cite{alexandrov,schoen}).
Let us define
$$\mathcal{A} := \{ t\in [0,\pi/2): M_{+}(t) \text{ is a graph over } \Pi(0) \text{ and } M_{+}^{*}(t)\geq M_{-}(t) \}.$$
Recall from \cite[Definition 3.1]{fra14} that the relation $M_{+}^{*}(t)\geq M_{-}(t)$ means that $M_{+}^{*}(t)$ is on the right hand side of $M_-(t)$.
We will prove that $0\in \mathcal{A}$. In this case we have that $M_{+}^{*}(0)\geq M_{-}(0)$.
By a symmetric argument we can show that $M_{+}(0)\geq M_{-}^{*}(0)$. Thus $M^*_{+}(0)\equiv M_{-}(0)$ and the proof of this step
will be completed. The steps of the proof are the same as in \cite[Proof of Theorem A]{fra14} with the
difference that here we have to control the behavior of the Gau{\ss} map in the direction of the $x_2$-axis.

\begin{claim}
The minimum of the set $\mathcal{A}$ is $0$. In particular, $\mathcal{A}=[0,\pi/2)$.
\end{claim}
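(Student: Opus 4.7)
The plan is to run Alexandrov's moving planes method in the direction of $\mathrm{e}_1$ and prove that $\mathcal{A}$ is nonempty, closed, and open in $[0,\pi/2)$. Since this interval is connected, the three properties together force $\mathcal{A}=[0,\pi/2)$ and, in particular, $0\in\mathcal{A}$. Nonemptiness is supplied by Claim \ref{graph-1} together with the slab containment $M\subset S=(-\pi/2,\pi/2)\times\real{2}$: for $\delta>0$ sufficiently small the right wing $M_+(\pi/2-\delta)$ is a graph over $\Pi(0)$, and its reflection sits in the thin slab $[\pi/2-2\delta,\pi/2-\delta]\times\real{2}$, hence trivially to the right of $M_-(\pi/2-\delta)$. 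Closedness as $t_n\to t_\infty$ is routine: both the graph-over-$\Pi(0)$ condition for $M_+$ and the ordering $M_+^*\geq M_-$ are stable under smooth limits.

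The substance of the proof is openness. Assume toward a contradiction that $t_0:=\min\mathcal{A}>0$. Examine where the weak inequality $M_+^*(t_0)\geq M_-(t_0)$ could become an equality. A tangency at a point off $\Pi(t_0)$ triggers the interior tangency principle (Theorem \ref{thm:tangency_principle}(a)) and makes $\Pi(t_0)$ a plane of symmetry of $M$; since the asymptotic half-planes are contained in $\Pi(\pm\pi/2)$, the only candidate symmetry plane is $\Pi(0)$, contradicting $t_0>0$. A tangency on $M\cap\Pi(t_0)$ is equally impossible: either the Gau{\ss} maps of $M_+^*(t_0)$ and $M_-(t_0)$ already agree at the boundary point, in which case the boundary tangency principle (Theorem \ref{thm:tangency_principle}(b)) yields the same conclusion, or a Hopf-type argument applied to the reflected surface forces $\langle\xi,\mathrm{e}_1\rangle=0$ along $M\cap\Pi(t_0)$ and reduces matters to the previous case.

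The essential obstacle, and the reason Lemma \ref{pro:limit_process} was established, is ruling out contact ``at infinity'': a sequence $\{p_n=(p_{1n},p_{2n},p_{3n})\}\subset M$ along which the signed gap between $M_+^*(t_0)$ and $M_-(t_0)$ tends to zero without producing a finite contact point. By Claim \ref{wm} the coordinates $\{p_{3n}\}$ remain bounded, and since no interior tangency occurs we must have $|p_{2n}|\to\infty$. I would apply Lemma \ref{pro:limit_process} to the translated sequence $\{M+(0,-p_{2n},0)\}$, obtaining after extraction a properly embedded translator $M_\infty$ with the same asymptotic behavior as $M$, on which the reflected and non-reflected sides meet at a finite point. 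The two preceding cases applied to $M_\infty$ then force $\Pi(t_0)$ to be a symmetry plane of $M_\infty$, whence again $t_0=0$, contradicting $t_0>0$. Having excluded every scenario, $M_+^*(t_0)$ is uniformly separated from $M_-(t_0)$, so a small decrease $t_0\mapsto t_0-\epsilon$ preserves both the graph condition and the ordering, placing $t_0-\epsilon\in\mathcal{A}$. This contradicts the minimality of $t_0$ and completes the proof.
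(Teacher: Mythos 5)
Your overall architecture (show $\mathcal{A}$ is nonempty, closed, and relatively open in the connected interval $[0,\pi/2)$) is equivalent to the paper's, and your use of Lemma \ref{pro:limit_process} to convert a ``contact at infinity'' between $M_+^*(t_0)$ and $M_-(t_0)$ into a genuine tangency for a limit translator, then invoking Theorem \ref{thm:tangency_principle}, is exactly the paper's mechanism for its Condition 2. The nonemptiness and closedness claims are also fine.

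There is, however, a genuine gap in your openness step. Your case analysis examines only where the ordering $M_+^*(t_0)\ge M_-(t_0)$ could degenerate, and from its non-degeneracy you conclude that the two pieces are ``uniformly separated,'' so that a small decrease of the plane preserves \emph{both} defining conditions of $\mathcal{A}$. This fails on two counts. First, $M_+^*(t_0)$ and $M_-(t_0)$ share the common boundary $M\cap\Pi(t_0)$, so they are never uniformly separated; any quantitative gap must be argued near that boundary and near infinity. Second, and more seriously, the graph-over-$\Pi(0)$ condition is a property of $M_+(t)$ alone and can fail as $t$ drops below $t_0$ \emph{without} the ordering ever degenerating: there could exist $t_i\to t_0$ and points $q_i\in M_+(t_i)$ with bounded $x_1$- and $x_3$-coordinates but $|x_2(q_i)|\to\infty$ at which $\xi(q_i)\in\Pi(0)$, so that $M_+(t)$ is not even locally a graph over $\Pi(0)$ for any $t<t_0$. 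Your proposal never excludes this scenario. The paper devotes the core of its proof (Condition 1 and the ``Assertion'' leading to \eqref{eq:normal_vector_in_Pi_during_a_small_interval}) precisely to it: one splits $M_+(t)$ at a large height $\alpha$, controls the upper part by the asymptotic hypothesis via \eqref{eq:graph_outside_a_cylinder}, and for the lower, $x_2$-unbounded part runs a separate compactness argument---again through Lemma \ref{pro:limit_process} and the boundary tangency principle---to show that the Gau{\ss} map stays off $\Pi(0)$ for all $t$ in a whole interval $[s_0-\varepsilon_1,s_0]$. Without this control the plane cannot be moved at all, so your argument does not close as written.
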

{\it Proof of the claim.} Due to Claim \ref{graph-1} it follows that given a sufficiently small number $\varepsilon$,
there exists a positive number $t$ such that the surface $M_+(t)$ can be represented as a graph over $\Pi(0)$ as well as a graph
over the $x_1x_2$-plane. Hence one can easily show that $\mathcal{A}$ is a non-empty set. Following the same arguments as
in \cite[Section 3, Proof of Theorem A]{fra14}, we can show that $\mathcal{A}$ is a closed subset of $[0,\pi/2)$. Moreover if $s\in \mathcal{A}$,
then $[s,\pi/2)\subset \mathcal{A}$. Suppose now that $s_{0}:=\min \mathcal{A}>0$. Then we will get at a contradiction, i.e., we will show
that there exists a positive number $\varepsilon$ such that $s_{0}-\varepsilon \in \mathcal{A}$.

\textit{Condition $1$}: We will show at first that there exists a positive constant $\varepsilon_{1}< s_{0}$ such that $M_{+}(s_{0}-\varepsilon_{1})$
is a graph over the plane $\Pi(0)$. Take a positive number $\alpha$ and consider the sets
$${M}^+_+(s):=\{(x_{1},x_2,x_{3})\in M_+(s): x_{3}>\alpha \},$$
$${M}^+_-(s):=\{(x_{1},x_2,x_{3})\in M_-(s): x_{3}>\alpha \},$$
and
$${M}^-_+(s):=\{(x_{1},x_2,x_{3})\in M_+(s): x_{3}\leq \alpha \},$$
$${M}^-_-(s):=\{(x_{1},x_2,x_{3})\in M_-(s): x_{3}\leq \alpha \}.$$
Since $M_+(s_0)$ is a graph over $\Pi(0)$, there exists $\alpha$ large enough such that
\begin{equation}\label{eq:graph_outside_a_cylinder}
\text{dist}\big[ \xi\left(M^+_+(s_0)\right), \Pi(0) \big]>0.
\end{equation}
We fix such an $\alpha$. From \eqref{eq:graph_outside_a_cylinder} it follows that there exists $\varepsilon_{0}>0$ such that ${M}^+_+(s_{0}-\varepsilon_{0})$
can be represented as a graph over the plane $\Pi(0)$ and furthermore
\begin{equation}\label{eq:reflection_outside_a_cylinder}
{M}^{+*}_{+}(s_{0}-\varepsilon_{0}) \geq M^+_{-}(s_{0}-\varepsilon_{0}).
\end{equation}
Let  us now investigate the lower part of our surface $M^{-}_{+}(s_0)$. Because $s_{0}\in\mathcal{A}$, we can represent $M^{-}_{+}(s_{0})$ as a graph
over the plane $\Pi(0)$. Note that there is no point in $M^-_{+}(s_{0})$ with normal vector included in the plane $\Pi(0)$ since otherwise
$M^-_{+}(s_{0})$ and its reflection with respect to $\Pi(s_0)$ would have the same tangent plane at that point so by the tangency principle at the boundary
$M$ would have been symmetric to a plane parallel to $\Pi(0)$. But this contradicts the asymptotic
behavior of $M$. Consequently,
\begin{equation}\label{eq:normal_vector_in_Pi_at_s0}
\xi\left(M^-_+(s_0)\right)\cap \Pi(0) =\emptyset.
\end{equation}
{\bf Assertion.} {\it There exists $\varepsilon_{1}\in (0,\varepsilon_{0}]$ such that, for all $t\in [s_{0}-\varepsilon_{1},s_{0}]$,}
\begin{equation}\label{eq:normal_vector_in_Pi_during_a_small_interval}
\xi\left(M^-_+(t)\right)\cap \Pi(0) =\emptyset.
\end{equation}
{\it Proof of the assertion.} Suppose to the contrary that such $\varepsilon_1$ does not exist. This implies that for all $i\in\natural{}$ there exists
$t_i\in [s_{0}-1/i,s_{0}]$ such that
$$\xi\left(M^-_+(t_i)\right)\cap \Pi(0) \neq \emptyset.$$
Then there exists a sequence $\{q_{i}\}_{i\in \mathbb{N}} \subset{M}^-_{+}(t_{i})$
such that $\xi(q_{i})\in \Pi(0)$. Only two situations can occur, namely either the sequence $\{q_{i}\}_{i\in\natural{}}$ is bounded or it is unbounded.
We will show that both cases lead to a contradiction.

If $\{q_{i}\}_{i\in \mathbb{N}}$ is bounded, then it should have a convergent subsequence that we do not relabel for simplicity.
Denote its limit by $q_{\infty}$. Note that $q_{\infty}$ belongs to the closure of $M^-_{+}(s_{0})$. Hence, by the continuity of the Gau{\ss}
map
$$\Pi(0) \supset \mathbb{S}^{1}\ni \xi(q_{i}) \to \xi(q_{\infty}) \in \mathbb{S}^{1} \subset \Pi(0).$$
Then
$$\xi\left(M^-_+(s_0)\right)\cap \Pi(0) \neq \emptyset,$$
which contradicts the relation \eqref{eq:normal_vector_in_Pi_at_s0}.

Let us now examine the case where the sequence $\{q_{i}=(q_{1i},q_{2i},q_{3i})\}_{i\in \mathbb{N}}$ is not bounded. The first coordinate $\{q_{1i}\}_{i\in\natural{}}$
of $\{q_{n}\}_{n\in \mathbb{N}}$ is bounded. The last coordinate $\{q_{3i}\}_{i\in\natural{}}$ of $\{q_{i}\}_{i\in \mathbb{N}}$ is also bounded. Therefore, the second
coordinate $\{q_{2i}\}_{i\in\natural{}}$ of the sequence must be  unbounded. Consider now the sequence $\{M_i=M+(0,-q_{2i},0)\}_{i\in\natural{}}$.
Due to Lemma \ref{pro:limit_process}, we have that after passing to a subsequence, $\{M_{i}\}_{i\in \mathbb{N}}$ converges smoothly to a properly
embedded connected translator $M_{\infty}$ which has the same asymptotic behavior as $M$.
Furthermore, the limiting surface $M_{\infty}$ has the following additional properties:           
\begin{enumerate}[\rm (a)]
\item The surface $( M_{\infty})_{+}(s_{0})$ can be represented as a graph over the plane $\Pi(0)$.
\smallskip
\item The inequality $ (M_{\infty})^*_+(s_{0}) \geq (M_{\infty})_-(s_{0})$ holds true.
\smallskip
\item There exists a point in $M_{\infty}$ in which the Gau{\ss} map belongs to the plane $\Pi(0)$.
\end{enumerate}
Applying the tangency principle at the boundary of $(M_{\infty})_{+}^{*}(s_{0})$ and $(M_{\infty})_{-}(s_{0})$
we deduce that $\Pi(s_{0})$ is a plane of symmetry for $M_{\infty}$, something that contradicts the asymptotic behavior of $M_{\infty}$.
This completes the proof of our assertion.

The relation \eqref{eq:normal_vector_in_Pi_during_a_small_interval} implies that, for every $t\in [s_{0}-\varepsilon_{1},s_{0}]$,
the surface $M^-_{+}(t)$ can be represented as a graph over $\Pi(0)$. Consequently, $M_{+}(t)$ is a graph over $\Pi(0)$ for all
$t\geq s_{0}-\varepsilon_{1}$. Hence the first condition in the definition of the set $\mathcal{A}$ is verified.

\textit{Condition $2$}: Reasoning again as in \cite[Proof of Theorem A]{fra14} and with the help of Lemma 3.1 we can prove the inequality
$M^*_+(s_0-\varepsilon_1)\ge M_-(s_0-\varepsilon_1).$

Therefore, by Conditions 1 and 2, we have that $s_0-\varepsilon\in\mathcal{A}$.
This contradicts the fact that $s_0$ is the infimum of $ \mathcal{A}.$ So, $s_0=0$ and this concludes the proof of STEP 5.

{\bf STEP 6:} Let us explore the asymptotic behavior of our translating soliton $M$ as its $x_2$-coordinate function tends to infinity.
\begin{claim} \label{cl:maxmin}
Consider the profile curve 
$\Gamma=M \cap \Pi(0)$.  If the coordinate function $x_3|_\Gamma$ attains its global extremum on $\Gamma$ (maximum or minimum), then $M$ is a grim reaper cylinder.
\end{claim}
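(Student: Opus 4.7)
\emph{Proof plan.} I treat the case when $x_3|_\Gamma$ attains its global minimum at a point $p=(0,p_2,p_3)\in\Gamma$; the maximum case is handled by reflecting $M$ through a horizontal plane (which produces a translator in direction $-\mathbf{v}$) and then repeating the argument.

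First I observe that $T_pM$ is the horizontal plane through $p$. Indeed, by the $\Pi(0)$-reflective symmetry of $M$ established in Step 5, combined with the tangency principle (which excludes $T_pM=\Pi(0)$ since this would force $M=\Pi(0)$, contradicting the asymptotic behavior), the tangent plane $T_qM$ must contain the $x_1$-axis direction at every $q\in\Gamma$. Since $\Gamma\subset\Pi(0)$ is smooth and $p$ is a critical point of $x_3|_\Gamma$, the tangent vector to $\Gamma$ at $p$ is parallel to the $x_2$-axis, and together with the $x_1$-direction spans $T_pM$.

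To conclude $M=\mathscr{G}_{p_3}$, I run a sliding and first-contact argument in the spirit of Step 3 of the main proof. Consider the family of (right-side-up) grim reaper cylinders $\mathscr{G}_l:=\{(x_1,x_2,l-\log\cos x_1):|x_1|<\pi/2,\ x_2\in\mathbb{R}\}$ for $l\in\mathbb{R}$, each a translator in direction $\mathbf{v}$, asymptotic to the same half-planes $\Pi(\pm\pi/2)$ as $M$, and tangent to the horizontal plane $\{x_3=l\}$ along its bottom line. In particular $\mathscr{G}_{p_3}$ passes through $p$ with a matching horizontal tangent plane. Writing $M$ and $\mathscr{G}_l$ as bi-graphs $\{x_1=\pm f(x_2,x_3)\}$ and $\{x_1=\pm g_l(x_2,x_3)\}$ over $\Pi(0)$ with $g_l(x_2,x_3)=\arccos(e^{-(x_3-l)})$ on $\{x_3\ge l\}$, a wing comparison shows $g_l>f$ on the overlap for all sufficiently negative $l$, whence $\mathscr{G}_l\cap M=\emptyset$. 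Let $l_0:=\sup\{l:\mathscr{G}_l\cap M=\emptyset\}$. At $l=l_0$ the surface $\mathscr{G}_{l_0}$ makes first contact with $M$. If the contact occurs at a finite interior point $q$, then the tangent planes at $q$ coincide (since $\mathscr{G}_l\cap M=\emptyset$ just before) and $\mathscr{G}_{l_0}$ lies on one side of $M$; the interior tangency principle (Theorem \ref{thm:tangency_principle}(a)) forces $M=\mathscr{G}_{l_0}$. If the contact is approached only by a sequence $\{q_n\}\subset M$ with $|x_2(q_n)|\to\infty$, I apply Lemma \ref{pro:limit_process} to the $x_2$-translates $\{M+(0,-x_2(q_n),0)\}$; the resulting smooth limit $M_\infty$ is a connected properly embedded translator with the same asymptotic behavior as $M$ that makes interior tangential contact with $\mathscr{G}_{l_0}$, and tangency again gives $M_\infty=\mathscr{G}_{l_0}$. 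Since $\mathscr{G}_{p_3}$ is already tangent to $M$ at the finite interior point $p$, the first alternative applies with $l_0=p_3$, so $M=\mathscr{G}_{p_3}$ is a grim reaper cylinder.

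The main obstacle I anticipate is ensuring that the sliding both initializes and terminates cleanly at $l_0=p_3$ rather than some $l_0<p_3$. A Hessian computation at $p$ of the defect $\delta:=\phi_M-\phi_{\mathscr{G}_{p_3}}$ (with both surfaces written locally as $x_3$-graphs) shows that the trace of $\mathrm{Hess}(\delta)(0,p_2)$ vanishes by the translator equation, so unless $\phi_{M,x_2x_2}(0,p_2)=0$, $\delta$ has a saddle at $(0,p_2)$ and $\mathscr{G}_{p_3}$ and $M$ locally cross at $p$. The sliding/first-contact framework combined with Lemma \ref{pro:limit_process} is exactly what converts this potential obstruction into a one-sided tangency (either at a finite interior point or in the limit $M_\infty$) to which Theorem \ref{thm:tangency_principle}(a) applies. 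Finally, initialization of the sliding requires a comparison of the exponential decay rates of $f$ and $g_l$ toward $\pi/2$ at the wings $x_3\to\infty$, which uses the $C^1$-asymptoticity together with the structure of translators asymptotic to vertical half-planes parallel to $\mathbf{v}$.
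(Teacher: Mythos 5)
Your sliding scheme---translating the \emph{full} grim reaper cylinder $\mathscr{G}_l=\{x_3=l-\log\cos x_1\}$ vertically---fails at two places, both for the same reason: $\mathscr{G}_l$ has exactly the same asymptotic planes $\Pi(\pm\pi/2)$ as $M$. First, the initialization ``$\mathscr{G}_l\cap M=\emptyset$ for $l$ sufficiently negative'' is equivalent to $\inf_M\big(x_3+\log\cos x_1\big)>-\infty$, i.e.\ to a lower bound of the form $\pi/2-|x_1|\ge C e^{-x_3}$ along the wings. The hypothesis is only $C^1$-asymptoticity, which carries no rate: if the wings approached $\Pi(\pm\pi/2)$ super-exponentially, every $\mathscr{G}_l$ would cross $M$ high up in the wings and the set $\{l:\mathscr{G}_l\cap M=\emptyset\}$ would be empty. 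You flag this yourself, but attributing it to ``$C^1$-asymptoticity together with the structure of translators'' is not a proof, and at this stage of the paper no such rate is available (smooth asymptoticity to the grim reaper is only established in Step 7, whose proof uses the present claim via Claim \ref{v-graph}). Second, even granting initialization, the branch $l_0\in\{l:\mathscr{G}_l\cap M=\emptyset\}$ is not closed: because $\mathscr{G}_{l_0}$ and $M$ share asymptotic planes, $\operatorname{dist}(\mathscr{G}_{l_0},M)=0$ is realized along sequences with $x_3\to\infty$, not necessarily with $x_3$ bounded and $|x_2|\to\infty$, so Lemma \ref{pro:limit_process} (which translates only in $x_2$) produces no interior contact point. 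And even where the blow-down does apply, the conclusion $M_\infty=\mathscr{G}_{l_0}$ is \emph{not} a contradiction here (unlike in Steps 1--3, where the comparison surface is asymptotic to different planes than $M_\infty$), nor does it imply $M=\mathscr{G}_{l_0}$. Your closing assertion that ``the first alternative applies with $l_0=p_3$'' therefore does not follow: a priori the first contact could occur at some $l_0<p_3$ only ``at infinity''.

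These are precisely the difficulties the paper's argument is engineered to avoid. It compares the half-surface $M_+(0)$ with \emph{half} grim reaper cylinders $\gp^{t,l}_+$ whose apex line sits on $\Pi(t)$ and whose asymptotic plane is $\Pi(t+\pi/2)$, and slides horizontally in $t$. For $t<0$ the comparison surface's asymptotic plane lies strictly inside the slab, so any near-contact sequence automatically has bounded $x_3$ and Lemma \ref{pro:limit_process} applies; initialization is trivial since for $t\le-\pi/2$ the half grim reaper lies in $\{x_1<0\}$, disjoint from $M_+(0)$. The terminal contact at $t=0$ then occurs at the boundary point $p\in\Gamma\subset\Pi(0)$, where both surfaces have horizontal tangent planes (your observation about $T_pM$ is correct and is used exactly here), and the boundary tangency principle, Theorem \ref{thm:tangency_principle}(b), together with the $\Pi(0)$-symmetry of Step 5 concludes. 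As written, your proposal has a genuine gap; to rescue a vertical sliding you would first have to prove the exponential lower bound on the wing separation, which is a nontrivial missing ingredient.
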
 

{\it Proof of the claim.} We will distinguish two cases. The idea is to compare
$M$ with a ``half-grim reaper cylinder" at the level where $x_3$ attains its extremum. 

{\it Case A:} Suppose at first that there exists a point $p\in \Gamma$ (see Fig. \ref{profile}) such that
$$l:=x_3(p)={\max}_{\Gamma} x_3.$$
Observe that
$$\partial M_{+}(0)\subset \{(x_1,x_2,x_3)\in\real{3}:x_3 \le l\}.$$
\begin{figure}[h]
\includegraphics[scale=.07]{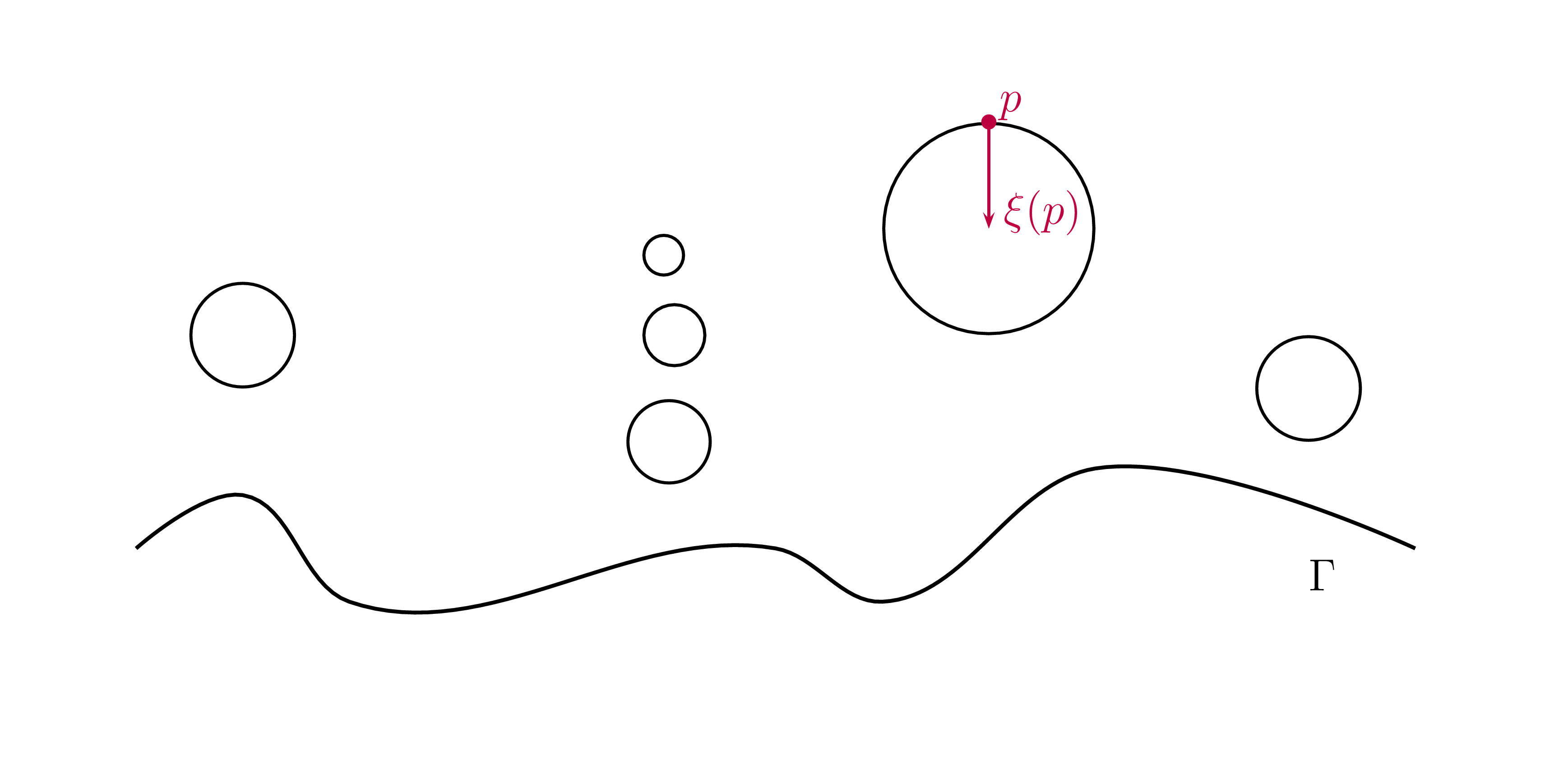}\caption{The profile curve $\Gamma$}\label{profile}
\end{figure}

For a fixed real number $t$ consider the ``half-grim reaper cylinder" (see Fig. \ref{comp-half-grim}) given by
$$\gp^{t,l}_+= \big\{ \big(x_1,x_2,l+\log \cos (x_1-t)\big)\in\real{3}: x_1 \in [t,{\pi}/{2}+t), x_2\in\real{}\big\}.$$
Define now the set
$$\mathcal{Q}:=\big\{ t\in(-\infty,0): \gp^{t,l}_+\cap M_+(0)=\emptyset\big\}$$
Obviously, $\mathcal{Q}$ is a non-empty set. Moreover, if $t \in \mathcal{Q}$
then $(-\infty, t) \subset \mathcal{Q}.$
Let $t_0:=\sup \mathcal{Q}.$
\begin{figure}[h]
\includegraphics[scale=.07]{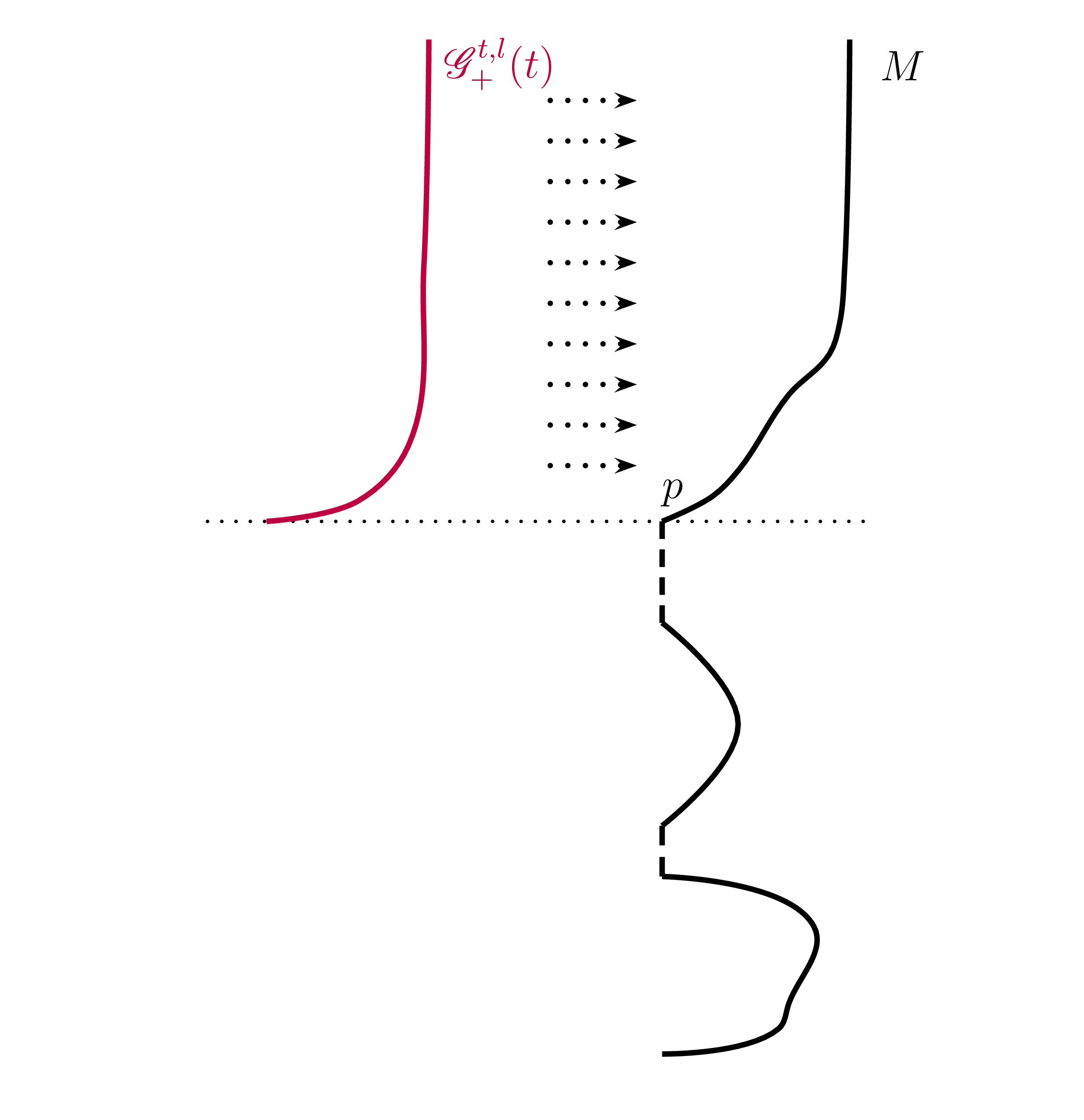}\caption{Comparing with a plane}\label{comp-half-grim}
\end{figure}

We claim that $t_0=0$. Suppose this is not true. If $t_0 \not\in \mathcal{Q}$, then there would be an interior
point of contact (notice that the boundaries of both surfaces do not touch when $t<0$). This implies that
$M=\gp^{t_0,l}$, which contradicts the assumption on the asymptotic behavior of $M$. Let us consider now the
case where $t_0 \in \mathcal{Q}$. In this case there exists a divergent sequence  $\{p_i=(p_{1i},p_{2i},p_{3i})\}_{i \in \n} \subset M_+(0)$
such that
$$\lim_{i\to\infty}\mbox{dist}\big(p_i, \gp_+^{t_0,l}\big)=0.$$
Because the asymptotic behavior of $\gp^{t_0,l}_+$ and $M_+(0)$ is different
and the distance between their boundaries is positive, then one can find
constants $a_0$ and $a_1$ such that $a_0<x_3(p_i)<a_1$, for all $i \in \n$.
So, $\{p_{2 i}\}_{i\in\natural{}}$ tends to infinity.  Now we can apply Lemma \ref{pro:limit_process} in order
to deduce that the limit of the sequence $\{M_i\}_{i\in\natural{}}$, given by
$$M_i:= M-(0,p_{2 i},0),$$
exists and has the same asymptotic behavior as
$M$. Let us call this limit $M_\infty$. But now $M_\infty$ and $\gp^{t_0,l}_+$ have an interior point of contact
and thus they must coincide. This leads again to a contradiction because $M_\infty$ and $\gp^{t_0,l}_+$ do not have 
the same asymptotic  behavior. Hence, $t_0=0$. Consequently, $\gp^{0,l}_+$ and $M_+(0)$ have 
a boundary contact at $p$. Observe that the tangent plane at $p$ of both surfaces is horizontal by STEP $5$, and therefore by the boundary tangency principle
they must coincide.

{\it Case B:}
Suppose now that there exists ${q}\in \Gamma$ such that
$$\mu=x_3(q)={\min}_{\Gamma}x_3.$$
In this case, we compare $M_+(0)$ with the family of ``half-grim reaper cylinders" $\big\{\gp^{t,{\mu}}_+\big\}_{t \ge 0}$ and we
proceed exactly as in the proof of Case A.

\begin{claim} \label{v-graph}
The surface $M$ is a graph over the $x_1x_2$-plane.
\end{claim}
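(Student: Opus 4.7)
By Claim~\ref{cl:maxmin}, if $x_3|_\Gamma$ attains its global extremum then $M$ coincides with a grim reaper cylinder, which is itself a graph over a slab in the $x_1 x_2$-plane. So I assume that $m:=\sup_\Gamma x_3$ and $\mu:=\inf_\Gamma x_3$ (both finite because the asymptotic hypothesis confines $\Gamma$ inside a cylinder about the $x_2$-axis) are not attained on $\Gamma$. The goal is to prove that $M$ is strictly mean convex, i.e.\ $H>0$ everywhere on $M$. The graph property then follows: the bi-graph description of $M_+(0)$ over $\Pi(0)$ from STEP~5, combined with the translator identity $H=-\langle\v,\xi\rangle$, makes $H>0$ equivalent to $M_+(0)$ being locally a graph over the $x_1 x_2$-plane; together with the wing graphicality from Claim~\ref{graph-1}, the connectedness of $M$, and the simple connectedness of $(-\pi/2,\pi/2)\times\real{}$, a standard covering-map argument upgrades this to a global graph statement.

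By equation \eqref{gaussmap} of Claim~\ref{graph-1}, $H>0$ in the wings. To control $H$ at $|x_2|\to\infty$ I blow down along the ends of $\Gamma$: pick $p_i\in\Gamma$ with $x_3(p_i)\to m$. The properness of $\Gamma\subset\Pi(0)$ and the non-attainment of $m$ force $|x_2(p_i)|\to\infty$; WLOG $x_2(p_i)\to+\infty$. Lemma~\ref{pro:limit_process} then gives a subsequential smooth limit $M_\infty$ of $M-(0,x_2(p_i),0)$: a properly embedded connected translator with the same asymptotic behavior as $M$ and with $(0,0,m)\in\Gamma_\infty:=M_\infty\cap\Pi(0)$. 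Since $\sup_{\Gamma_\infty}x_3\le m$, the value $m$ is attained on $\Gamma_\infty$, and Claim~\ref{cl:maxmin} applied to $M_\infty$ forces it to be a grim reaper cylinder; its symmetry about $\Pi(0)$ (inherited from $M$) then identifies $M_\infty=\gp^{0,m}$. The analogous argument with $\mu$ in place of $m$ and/or with $x_2\to-\infty$ produces grim-reaper limits at the remaining ends. Since grim reapers are strictly mean convex, the smooth convergence transfers $H>0$ to $M$ on translates of fixed compact sets around these sequences; combined with the wing bound, the closed set $\mathcal{N}:=\{p\in M:H(p)\le 0\}$ is contained in a bounded region of $\real{3}$ and hence compact in $M$ by properness.

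On $M$ the scalar mean curvature satisfies the linear homogeneous elliptic equation
\[
\Delta_M H + \langle\v^T,\nabla H\rangle + |A|^2 H = 0,
\]
the standard Jacobi identity for translators. If $\mathcal{N}\ne\emptyset$, pick a connected component $\Omega$ of $\{H<0\}$; then $\bar\Omega\subset M$ is compact with $H<0$ in the interior and $H=0$ on $\partial\Omega$. A strong maximum/minimum principle argument applied to $-H\ge 0$ viewed as a super-solution of the drift Laplacian $\Delta_M+\langle\v^T,\nabla\cdot\rangle$ on $\bar\Omega$ yields a contradiction with $-H>0$ in $\Omega$, so $H\ge 0$ on all of $M$. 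The same principle on the connected surface $M$ then excludes $H\equiv 0$ (which would force $M$ to be a vertical plane, contradicting the asymptotic hypothesis) and delivers $H>0$ strictly.

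The chief technical obstacle I anticipate is the compactness of $\mathcal{N}$: the scheme above produces grim-reaper limits only along the special sequences on $\Gamma$ for which $x_3$ tends to the extreme heights $m$ or $\mu$, whereas the compactness really requires \emph{every} blow-down limit at $|x_2|\to\infty$ to be mean convex. This should follow from the same idea, since any such blow-down limit $M_\infty$ is a properly embedded translator with the same asymptotics, whose profile $\Gamma_\infty$ lies sandwiched between the two grim-reaper limits $\gp^{0,m}$ and $\gp^{0,\mu}$, so that $x_3|_{\Gamma_\infty}$ realizes either its supremum or its infimum; Claim~\ref{cl:maxmin} applied to $M_\infty$ then identifies it as a grim reaper, hence as strictly mean convex.
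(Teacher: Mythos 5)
There is a genuine gap, and it sits at the decisive point of your plan. You invert the paper's logic: you try to prove strict mean convexity first and deduce graphicality from it, and the step that is supposed to deliver $H>0$ --- excluding a compact component $\Omega$ of $\{H<0\}$ by the maximum principle --- is not valid. On such a component $u:=-H$ satisfies $\Delta u+\langle\nabla u,\nabla x_3\rangle=-|A|^2u\le 0$, so $u$ is a nonnegative supersolution of the drift Laplacian that is positive in the interior and vanishes on $\partial\Omega$; this configuration is perfectly compatible with the strong maximum principle (think of the principal Dirichlet eigenfunction of the stability operator $\Delta+\langle\nabla x_3,\nabla\,\cdot\,\rangle+|A|^2$ on a domain whose first eigenvalue is zero). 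The zeroth-order coefficient $|A|^2\ge 0$ in \eqref{pde2} has the wrong sign for the maximum principle, so no contradiction follows; to run a Hopf/quotient argument you would need a positive solution or supersolution of the same equation on $\overline{\Omega}$, which is essentially what you are trying to establish. A second gap is the compactness of $\mathcal{N}=\{H\le 0\}$: your scheme needs every blow-down limit as $|x_2|\to\infty$ to be mean convex, and the proposed fix --- that the profile of an arbitrary limit ``realizes its supremum or infimum'' because it is sandwiched between $\gp^{0,m}$ and $\gp^{0,\mu}$ --- does not follow: being confined between two heights does not make the extremum attained, so Claim \ref{cl:maxmin} cannot be invoked for arbitrary divergent sequences. (The final ``covering-map'' upgrade from local to global graphicality is also only sketched, though that part could be repaired since $x_3$ is bounded below on $M$ and $M$ lies in a slab, so the projection is proper.)

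For comparison, the paper never proves mean convexity at this stage. It blows down only at the two special heights $\alpha=\limsup_{x_2\to+\infty}x_3|_\Gamma$ and $\beta=\liminf_{x_2\to-\infty}x_3|_\Gamma$, for which attainment of the extremum on the limit profile is automatic, obtaining two grim reaper limits; this yields sequences of strips $R_i$, $L_i$ of $M$ that are honest graphs over grim reaper cylinders (via \eqref{gaussmap}), and together with the graphical wings from Claim \ref{graph-1} these control a neighborhood of the boundary of the compact exhaustion $\Lambda_i$. Graphicality of each whole $\Lambda_i$ is then forced by translating $\Lambda_i$ vertically and applying the tangency principle (Theorem \ref{thm:tangency_principle}) at a first point of contact, and $M$ is exhausted by the $\Lambda_i$. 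Strict mean convexity is deduced afterwards, as a consequence of the graph property, in STEP 7. If you wish to keep your order of argument, you would in any case have to replace the maximum-principle step by an Alexandrov-type translation/tangency argument of this kind.
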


{\it Proof of the claim:} Recall that the profile curve $\Gamma=\Pi(0) \cap M$ lies inside the cylinder $\mathcal{C}$. Let
$$\alpha:= \limsup_{x_2 \to +\infty} \left(x_3|_\Gamma \right)\quad\text{and}
\quad \beta:= \liminf_{x_2 \to -\infty} \left(x_3|_\Gamma \right).$$
Take sequences $\{p_i=(0,p_{2i},p_{3i})\}_{i \in \n}$ and $\{q_i=(0,q_{2i},q_{3i})\}_{i \in \n}$ along the curve $\Gamma$ such that
$$\lim_{i\to\infty} p_{2i}=+\infty,\,\,\lim_{i\to\infty}q_{2i}=-\infty,\,\,\lim_{i\to\infty}p_{3i}=\alpha\,\,\text{and}\,\,\lim_{i\to\infty}q_{3i}=\beta.$$
and define the sequences of translators $\{M^{\alpha}_i\}_{i\in\natural{}}$, $\{M^{\beta}_i\}_{i\in\natural{}}$ given by 
$$M_i^\alpha:= M-(0,p_{2 i},0)\quad\text{and}\quad M_j^\beta:=M-(0,q_{2 j},0).$$
From Lemma \ref{pro:limit_process} we deduce that
$$M^{\alpha}_i\to M^{\alpha}_{\infty}\quad\text{and}\quad M^{\beta}_i\to M^{\beta}_{\infty},$$
where $M^{\alpha}_{\infty}$ and $M^{\beta}_{\infty}$ are connected properly embedded translators with the same asymptotic
behavior as our surface $M$.

Consider the points $(0,0,\alpha) \in M^\alpha_{\infty}$ and $(0,0,\beta) \in M^\beta_{\infty}.$ Taking into account
the way in which we have constructed our limits, we have that 
$$\alpha=\max_{M^\alpha_{\infty} \cap \Pi(0)}x_3\quad\text{and} 
\quad\beta=\min_{M^\beta_{\infty} \cap \Pi(0)} x_3.$$
At this point, we can use Claim \ref{cl:maxmin} to conclude that the limits 
$M_{\infty}^\alpha$ and 
$M_{\infty}^\beta$ are grim reaper cylinders, possibly displayed at different heights. From the definition 
of the limit and the second part of Theorem \ref{thm:compactness_theorem_Brian_White},
it follows that
for large enough values $i\ge i_0$ there exist:
\begin{enumerate}[\rm (a)]
\item
strictly increasing sequences of positive numbers $\{m_{1i}\}_{i\in\natural{}}$, $\{m_{2i}\}_{i\in\natural{}}$, $\{n_{1i}\}_{i\in\natural{}}$ and
$\{n_{2i}\}_{i\in\natural{}}$ satisfying
$$m_{1i}<m_{2i}\quad\text{and}\quad -n_{1i}<-n_{2i},$$
for every $i\ge i_0$,
\smallskip
\item
real smooth functions $\varphi_i:(-\pi/2,\pi/2)\times (m_{1i},m_{2i})\to\real{}$ and $\vartheta_i:(-\pi/2,\pi/2)\times (-n_{1i},-n_{2i})\to\real{}$ satisfying
the conditions
$$|\varphi_i|<1/i,\,\,|\vartheta_i|<1/i,\,\, |D\varphi_i|<1/i\,\,\text{and}\,\,|D\vartheta_i|<1/i,$$
for any $i\ge i_0$,
\end{enumerate}
such that the surfaces
$$R_i:=\big\{(x_1,x_2,x_3)\in M:m_{1i}<x_2<m_{2i}\big\}$$
and
$$L_i:=\big\{(x_1,x_2,x_3)\in M:-n_{1i}<x_2<-n_{2i}\big\}$$
can be represented as graphs over grim reaper cylinders that are generated by the functions $\varphi_i$ and 
$\vartheta_i$, respectively. From the formula \eqref{gaussmap}, by taking larger $i_0$ if necessary,
we deduce that the strips $\{R_i\}_{i\ge i_0}$ and $\{L_i\}_{i\ge i_0}$ are strictly mean convex and so their
outer unit normals are nowhere perpendicular to $\v=(0,0,1)$. Hence each point has a neighborhood
that can be represented as a graph over the $x_1x_2$-plane. Because the strips $R_i$, $L_i$ under 
consideration are smoothly
asymptotic to strips of the corresponding grim reaper cylinders and because for the grim reaper cylinders it holds
$\langle\xi_u,(0,1,0)\rangle=0$, we deduce that the projections of $R_i$, $L_i$ to the
$x_1x_2$-plane are simply connected sets. Therefore, they can be represented
globally as graphs over rectangles of the $x_1x_2$-plane.

Consider now the compact exhaustion $\{\Lambda_i\}_{i\ge i_0}$ (see Fig. 
\ref{Fig-9}) of the surface $M$ 
given by
$$\Lambda_i:=\big\{(x_1,x_2,x_3)\in M:-a_i\le x_2\le b_i,\,\, x_3\le i\big\} $$
where
$$a_i=(n_{1i}+n_{2i})/2\quad\text{and}\quad b_i=(m_{1i}+m_{2i})/2.$$
\begin{figure}[h]
\includegraphics[scale=.08]{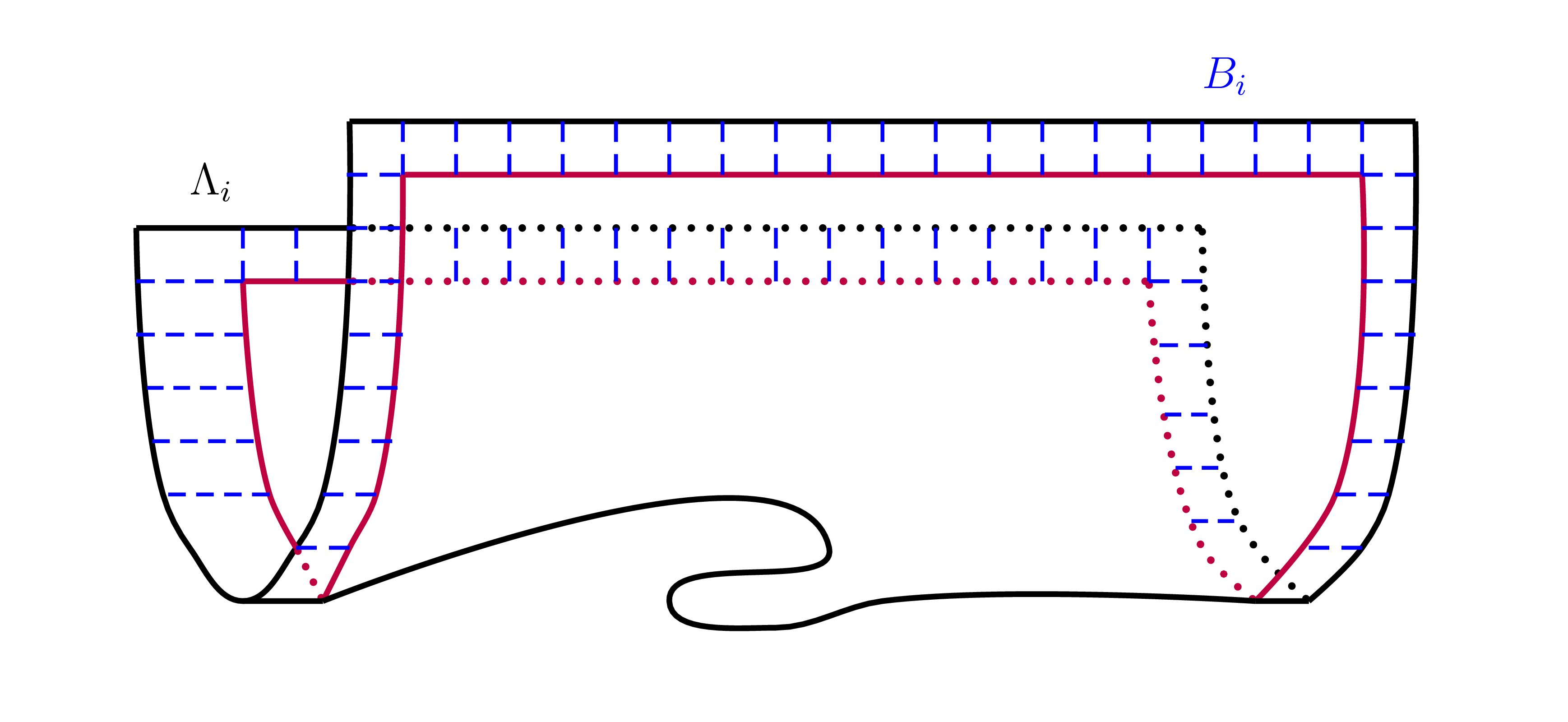}\caption{The exhaustion set $\Lambda_i$}\label{Fig-9}
\end{figure}
The boundary of each $\Lambda_i$ is piecewise smooth and consists of two lateral curves that converge to 
grim reapers and two top curves that converge to two parallel horizontal lines. Observe that in a strip $B_i$ around
$\partial\Lambda_i$ (see again Fig. \ref{Fig-9}) the surface $\Lambda_{i}$ is a graph over the $x_1x_2$-plane. The proof will be concluded 
if we prove that  there exists $i_1\ge i_0$ such that each $\Lambda_i$ is a graph over the $x_1x_2$-plane, for any $i\ge i_1$. Indeed,
at first fix a large height $t_0$ such that $M^+(t_0)$ is a graph over the $x_1x_2$-plane. From Claim \ref{wm} we know that
$$\dist\big(M^-(t_0),\Pi(\pi/2)\big)=\dist\big(\partial M^-(t_0),\Pi(\pi/2)\big)=:\delta.$$
From the asymptotic behavior of $M$ we know that there exists a number $t_1>t_0$ such that
$$\dist\big(M^-(t_1),\Pi(\pi/2)\big)=\dist\big(\partial M^-(t_1),\Pi(\pi/2)\big)=\delta/2.$$
Now fix an integer $i_1>\max\{i_0,t_1\}$, and suppose to the contrary that there is $i\ge i_1$ such that
$\Lambda_i$ is not a graph over the $x_1x_2$-plane. We will derive a contradiction. Let
$$\Lambda_i(s):=\Lambda_i+(0,0,s)$$
be the translation of  $\Lambda_i$ in direction of $\v$. Take a number $s_0$ such that
$$\Lambda_i(s_0)\cap\Lambda_i=\emptyset.$$
Start to move back $\Lambda_i(s_0)$ in the direction of $-\v$. Then there exists
$s_1>0$ where $\Lambda_i(s_1)$ intersects $\Lambda_i$. From the choice of $i_1$ we see that the 
intersection points must be interior points of contact. But then, from the tangency principle,
it follows that $\Lambda_i(s_1)=\Lambda_i$, which is a contradiction. Therefore, for each $i>i_1$ the surface $\Lambda_i$
must be a graph over the $x_1x_2$-plane. Because $\{\Lambda_i\}_{i\in\natural{}}$ is a compact 
exhaustion of $M$ we deduce that $M$ itself must be a graph over the $x_1x_2$-plane. In particular, $\mbox{genus}(M)=0$.

{\bf STEP 7:} From Claim \ref{v-graph} we see that our surface $M$ must be strictly mean convex. Consider now the $x_2$-coordinate of
the Gau{\ss} map, i.e., the smooth function $\xi_2:M\to\real{}$ given by
$\xi_2=\langle\xi,\operatorname{e}_2\rangle,$ where here $\operatorname{e}_2=(0,1,0)$.
By a straightforward computation (see for example the paper \cite[Lemma 2.1]{fra14}) we deduce that $\xi_2$ and $H$ satisfy the following partial differential equations
\begin{equation}\label{pde1}
\Delta\xi_2+\langle\nabla\xi_2,\nabla x_3\rangle+|A|^2\xi_2=0
\end{equation}
and
\begin{equation}\label{pde2}
\Delta H+\langle\nabla H,\nabla x_3\rangle+|A|^2H=0,
\end{equation}
where $|A|^2$ stands for the squared norm of the second fundamental form of $M$. Define now the function $h:=\xi_2H^{-1}$.
Combining the equations \eqref{pde1} and \eqref{pde2} we deduce that $h$ satisfies the following differential equation
\begin{equation}\label{finalpde}
\Delta h+\langle\nabla h,\nabla(x_3+2\log H)\rangle=0.
\end{equation}

\begin{claim}\label{lastclaim}
The surface $M$ is smoothly asymptotic outside a cylinder to the grim reaper cylinder.
\end{claim}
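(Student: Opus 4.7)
By Claim~\ref{v-graph}, $M$ is the graph $x_3=u(x_1,x_2)$ of a smooth function on a subdomain of $(-\pi/2,\pi/2)\times\mathbb{R}$, even in $x_1$ by STEP~5; the profile function $f(x_2):=u(0,x_2)$ is bounded because $\Gamma\subset\mathcal{C}$. The goal is to show that $f(x_2)\to c^\pm$ as $x_2\to\pm\infty$ and that the full family $M-(0,t,0)$ converges in $C^\infty_{\operatorname{loc}}$ to the grim reaper cylinder $G_{c^\pm}:=\{x_3=c^\pm-\log\cos x_1\}$. Combined with the hypothesized $C^{1}$-asymptotic behavior of $M$ outside $\mathcal{C}$, this yields smooth asymptotic behavior to a grim reaper cylinder outside a suitably enlarged cylinder.

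Firstly, I would produce grim reaper limits at the extremes via Lemma~\ref{pro:limit_process} and Claim~\ref{cl:maxmin}. Set $\alpha_+:=\limsup_{x_2\to+\infty}f$ and $\beta_+:=\liminf_{x_2\to+\infty}f$, both finite. Choose $y_i\to+\infty$ with $f(y_i)\to\alpha_+$; Lemma~\ref{pro:limit_process} produces a smooth subsequential limit $M^+_\infty$, a properly embedded translator sharing $M$'s asymptotic behavior and symmetric about $\Pi(0)$. Passing to the limit in the inequality $f(y_i+t)\le\alpha_++o(1)$ (valid for each fixed $t$) shows that the profile of $M^+_\infty$ attains its maximum $\alpha_+$ at the origin, so Case~A of Claim~\ref{cl:maxmin} forces $M^+_\infty=G_{\alpha_+}$. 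The symmetric argument with a minimizing sequence and Case~B of Claim~\ref{cl:maxmin} yields $G_{\beta_+}$ as another subsequential limit, and analogously for $x_2\to-\infty$, producing grim reapers $G_{\alpha_-},G_{\beta_-}$.

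The main obstacle is to prove $\alpha_+=\beta_+$ (and symmetrically $\alpha_-=\beta_-$), upgrading subsequential to full convergence. I would argue by contradiction. Assuming $\alpha_+>\beta_+$, a first sliding step shows $M$ is asymptotically squeezed between the two grim reapers: if $M$ met a grim reaper cylinder $G_c$ with $c>\alpha_+$ along points $p_k\in M$ with $x_2(p_k)\to+\infty$, then applying Lemma~\ref{pro:limit_process} to $M-(0,x_2(p_k),0)$ would yield a translator limit $N_\infty$ touching $G_c$ at an interior point, so by the tangency principle (Theorem~\ref{thm:tangency_principle}) $N_\infty=G_c$; but the bound $f\le\alpha_++o(1)$ passes to the limit to give $u_{N_\infty}(0,0)\le\alpha_+<c$, a contradiction. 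Hence $\beta_+\le u+\log\cos x_1\le\alpha_+$ for $x_2$ large. A second tangency argument then rules out $\alpha_+>\beta_+$ altogether: considering the one-parameter family $\{G_c\}_{c\in[\beta_+,\alpha_+]}$ and the fact that for any $c^*\in(\beta_+,\alpha_+)$ the value $c^*$ is attained by $u+\log\cos x_1$ at a sequence of points with $x_2\to+\infty$ (by continuity and the oscillation of $f$), one applies Lemma~\ref{pro:limit_process} at such points, obtains a further subsequential grim reaper limit $G_{c^*}$ distinct from $G_{\alpha_+}$ and $G_{\beta_+}$, and then slides any two such distinct limits against each other until a first interior contact appears, forcing $M$ to coincide with some $G_{c'}$ by Theorem~\ref{thm:tangency_principle} and contradicting the assumption $\alpha_+>\beta_+$. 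With $\alpha_+=\beta_+=:c^+$ and similarly $\alpha_-=\beta_-=:c^-$, every sequence $y_i\to\pm\infty$ yields a subsequential limit equal to $G_{c^\pm}$, so the entire family $M-(0,t,0)$ converges smoothly to $G_{c^\pm}$ as $t\to\pm\infty$, completing the proof.
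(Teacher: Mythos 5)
There is a genuine gap: you have proved a different statement from the one asserted. The phrase ``outside a cylinder'' refers to the solid cylinder $\mathcal{C}$ around the $x_2$-axis, so the claim concerns the two wings of $M$ in the regime $x_3\to+\infty$; its entire content is the upgrade of the \emph{hypothesized} $C^1$-asymptotics of those wings to the half-planes in $\Pi(\pm\pi/2)$ to $C^\infty$-asymptotics (this is exactly what is consumed in Claim~\ref{cl:h_at_infinity}, where ${\sup}_{T_{\delta_2}}|D^2\varphi|<\varepsilon$ is needed). Your argument instead studies the regime $x_2\to\pm\infty$, extracting grim reaper limits of $M-(0,t,0)$; but that analysis is essentially already carried out in STEP~6 (the limits $M^{\alpha}_{\infty}$, $M^{\beta}_{\infty}$ of Claim~\ref{v-graph}), and, more importantly, it cannot deliver the claim. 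The closing sentence of your plan --- that $C^\infty_{\operatorname{loc}}$ convergence as $x_2\to\pm\infty$ ``combined with the hypothesized $C^1$-asymptotic behavior'' yields smooth asymptotics outside an enlarged cylinder --- is a non sequitur: locally smooth convergence in the $x_2$-direction controls only compact ranges of $(x_1,x_3)$ and gives no uniform bound on higher derivatives as $x_3\to\infty$, while the hypothesis supplies only one derivative there. No amount of information about the $x_2\to\pm\infty$ limits upgrades the regularity of the convergence of the wings to the planes.

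The paper's proof is a one-line blow-down in the \emph{vertical} direction: set $M_i:=M+(0,0,-i)$, observe that area and genus are uniformly bounded on compact sets, and apply White's compactness theorem (Theorem~\ref{thm:compactness_theorem_Brian_White}) to conclude that $M_i$ converges smoothly, as minimal surfaces in Ilmanen's metric, to $\Pi(-\pi/2)\cup\Pi(\pi/2)$; smooth convergence of this sequence is equivalent to the wings of $M$ being $C^\infty$-asymptotic to those planes, and Lemma~\ref{asymptotic-lemma} transfers this from Ilmanen's metric to the Euclidean one. Since the wings of the grim reaper cylinder are themselves smoothly asymptotic to the same planes, the claim follows. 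If you want to salvage your write-up, you should replace the $x_2$-translation argument by this vertical translation argument (or some other mechanism, e.g.\ interior Schauder estimates for the graphical translator equation, that genuinely produces higher-order decay as $x_3\to\infty$).
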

{\it Proof of the claim.} Consider the sequence $\{M_i\}_{i\in\natural{}}$ given by
$M_i:=M+(0,0,-i),$
for any $i\in\natural{}$. One can
readily see that for any compact set $K$ of $\real{3}$, it holds
$${\limsup}_{i\to\infty}\mbox{area}\big\{M_i\cap K\big\}<\infty
\quad\text{and}\quad
{\limsup}_{i\to\infty}\mbox{genus}\big\{M_i\cap K\big\}<\infty.$$
From the compactness theorem of White, the sequence
of surfaces $\{M_i\}_{i\in\natural{}}$ converges smoothly (with respect to the Ilmanen's metric) to the union $\Pi(-\pi/2)\cup\Pi(\pi/2)$. Hence,
due to Lemma \ref{asymptotic-lemma}, the wings of the translator $M$ outside the cylinder must be smoothly asymptotic to the corresponding wings of the
grim reaper cylinder. This completes the proof of the claim.

\begin{claim}\label{cl:h_at_infinity}
The function $h$ tends to zero as we approach infinity of our surface $M.$
\end{claim}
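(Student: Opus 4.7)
The decisive observation is that, once $M$ is realised as a graph $x_3=g(x_1,x_2)$ over the $x_1x_2$-plane by Claim \ref{v-graph}, a direct calculation gives the clean identity
\[
h \;=\; \frac{\xi_2}{H} \;=\; g_{x_2}.
\]
Indeed the unit normal giving $H>0$ is $\xi=(g_{x_1},g_{x_2},-1)/\sqrt{1+|Dg|^2}$, whence $H=-\xi_3=1/\sqrt{1+|Dg|^2}$ and $\xi_2=g_{x_2}/\sqrt{1+|Dg|^2}$, so their ratio is simply $g_{x_2}$. One checks directly that this matches the formula for $h$ derived from \eqref{gm}--\eqref{gaussmap} in the grim-reaper parametrisation. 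The claim therefore reduces to showing that $g_{x_2}(x_1,x_2)\to 0$ as $(x_1,x_2)$ approaches the boundary of the domain $(-\pi/2,\pi/2)\times\mathbb{R}$. Two regimes of divergence must be handled.

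In the \emph{wing regime}, $|x_1(p_i)|\to\pi/2$. Claim \ref{lastclaim} shows $M$ is $C^\infty$-asymptotic (uniformly in $x_2$) to the grim reaper cylinder $\{x_3=-\log\cos x_1\}$. Writing $g(x_1,x_2)=-\log\cos x_1+\eta(x_1,x_2)$ with $\eta$ and all its derivatives tending to zero uniformly in $x_2$ as $|x_1|\to\pi/2$, we obtain $g_{x_2}=\eta_{x_2}\to 0$, and hence $h(p_i)\to 0$.

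In the \emph{axial regime}, $|x_2(p_i)|\to\infty$ and, by Claim \ref{wm}, $x_1(p_i)$ stays bounded away from $\pm\pi/2$ and $x_3(p_i)$ remains bounded. Translate $M_i:=M-(0,x_2(p_i),0)$; Lemma \ref{pro:limit_process} gives smooth subsequential convergence to a properly embedded connected translator $M_\infty$ with the same asymptotic behaviour. Iteratively applying Lemma \ref{pro:limit_process} to $M_\infty$ along sequences in $M_\infty\cap\Pi(0)$ realising $\limsup$ and $\liminf$ of $x_3$ and invoking the rigidity Claim \ref{cl:maxmin} forces the secondary limits to be grim reaper cylinders; propagating this rigidity back via the strip structure constructed in Claim \ref{v-graph}'s proof identifies $M_\infty$ itself with a grim reaper cylinder, so its graph function $g_\infty(y_1,y_2)$ is independent of $y_2$. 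Smooth convergence of the graph functions $g_i(y_1,y_2):=g(y_1,y_2+x_2(p_i))\to g_\infty$ in $C^\infty_{\mathrm{loc}}$ then yields
\[
g_{x_2}(p_i)\;=\;\partial_{y_2}g_i(x_1(p_i),0)\;\longrightarrow\;\partial_{y_2}g_\infty(x_1^\ast,0)\;=\;0,
\]
where $x_1^\ast=\lim x_1(p_i)$.

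The main obstacle is the axial regime, and specifically the identification of every subsequential limit $M_\infty$ as a grim reaper cylinder: the argument requires a somewhat delicate iterated application of Lemma \ref{pro:limit_process}, Claim \ref{cl:maxmin} and Claim \ref{v-graph}. The wing regime, by contrast, is almost immediate once the reduction $h=g_{x_2}$ is observed, since it converts a $0/0$ indeterminacy involving $\xi_2$ and $H$ into a single Euclidean derivative of the graph function that vanishes at the wings by the $C^\infty$-asymptotic provided by Claim \ref{lastclaim}.
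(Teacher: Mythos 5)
Your reduction $h=\xi_2H^{-1}=g_{x_2}$, where $x_3=g(x_1,x_2)$ is the vertical graph representation from Claim \ref{v-graph}, is correct and is an attractive reformulation, but both of your regimes contain genuine gaps, and in each case the missing step is exactly where the paper does its real work. In the wing regime, the assertion that ``$\eta$ and all its derivatives tend to zero uniformly in $x_2$'' is not a formal consequence of Claim \ref{lastclaim}: the asymptotics established there (and in Definition \ref{asymp-1}) control the graph function $\varphi$ of $M$ over the vertical plane $\Pi(\pi/2)$ (equivalently the normal graph over $\gp$), not the vertical graph function $g$. Converting between the two parametrizations one finds $\eta_{x_2}=g_{x_2}\approx -\varphi_{x_2}/\cos x_1$, so the identity $h=g_{x_2}$ has merely relocated the $0/0$ indeterminacy ($\xi_2\to0$ and $H\to0$ simultaneously) into the quotient $\varphi_{x_2}/\cos x_1$; the regime is therefore not ``almost immediate''. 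To resolve it one needs the second--order asymptotics of Claim \ref{lastclaim} together with the fact that $\varphi_{x_2}\to0$ as $x_1\to\pi/2$ for each fixed $x_2$, which yields $|\varphi_{x_2}(x_1,x_2)|\le(\pi/2-x_1)\,\sup|\varphi_{x_1x_2}|$ by integrating $\varphi_{x_1x_2}$ from $x_1$ to $\pi/2$, combined with $(\pi/2-x_1)/\cos x_1\to1$. This integration step is the heart of the paper's estimate on $\Lambda_{1i}$ and $\Lambda_{2i}$ and is absent from your argument.

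In the axial regime you need every subsequential limit $M_\infty$ of $M-(0,x_2(p_i),0)$, for an \emph{arbitrary} divergent sequence, to be a grim reaper cylinder. The results you cite do not give this: Claim \ref{cl:maxmin} applies only when $x_3$ attains a global extremum on the profile curve, which need not happen for $M_\infty$, and ``propagating the rigidity back via the strip structure'' is a sketch, not a proof. What STEP 6 actually provides is closeness to grim reapers only on the specific strips $R_i=\{m_{1i}<x_2<m_{2i}\}$ and $L_i$; a priori $M$ need not be close to a grim reaper for values of $x_2$ lying between consecutive strips. The paper sidesteps this entirely: it does not prove the literal pointwise statement, but only that $h|_{\partial\Lambda_i}\to0$, where the lateral boundaries $x_2=-a_i$, $x_2=b_i$ of the exhaustion are deliberately placed inside $L_i$ and $R_i$, so that $|D\varphi_i|<1/i$ and $\cos x_1$ bounded below give the estimate at once; this weaker boundary statement is all that the subsequent maximum principle argument requires. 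To repair your proof you should either restrict attention to such an exhaustion, or actually establish that every translation limit of $M$ in the $x_2$-direction is a grim reaper cylinder.
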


{\it Proof of the claim.} Consider the compact exhaustion $\{\Lambda_i\}_{i>i_1}$ defined in the STEP 6. The boundary of each
$\Lambda_i$ consists of four parts, namely:
\begin{eqnarray*}
\Lambda_{1i}:&=&\big\{(x_1,x_2,x_3)\in M: x_1> 0,\,\,-a_i\le x_2\le b_i,\,\, x_3=i\big\},\\
\Lambda_{2i}:&=&\big\{(x_1,x_2,x_3)\in M: x_1< 0,\,\,-a_i\le x_2\le b_i,\,\, x_3=i\big\},\\
\Lambda_{3i}:&=&\big\{(x_1,x_2,x_3)\in M: x_2=-a_i,\,\, x_3\le i\big\},\\
\Lambda_{4i}:&=&\big\{(x_1,x_2,x_3)\in M: x_2=b_i,\,\, x_3\le i\big\}.
\end{eqnarray*}
Bearing in mind the asymptotic behavior of $M$, we deduce that around each boundary curve line there exists a tubular neighborhood that can be represented
as the graph of a smooth function over a slab of the grim reaper cylinder. If $\varphi$ is such a function then, from the
equations \eqref{gm} and \eqref{gaussmap}, we can represent $h$ in the form
\begin{equation}\label{finalh}
h=-\frac{\varphi_{x_2}}{\cos x_1}\cdot\frac{1+\varphi\cos x_1}{1+\varphi\cos x_1+\varphi_{x_1}\sin x_1}.
\end{equation}
Let us examine at first the behavior of $h$ along $\Lambda_{1i}$. Note
that these curves belong to the wings of $M$ outside the cylinder.
Fix a sufficiently small $\varepsilon>0$. Then, there exists $\delta_2>0$ and
large enough index $i_2$ such that
$$M\cap\big\{(x_1,x_2,x_3)\in\real{3}:x_3\ge i_2\big\} $$
can be written as the graph over the grim reaper cylinder of a smooth function $\varphi$ defined in the domain
$T_{\delta_2}:=\big(\pi/2-\delta_2,\pi/2\big)\times\real{}$ satisfying
$${\sup}_{T_{\delta_2}}|\varphi|<\varepsilon,\quad
{\sup}_{T_{\delta_2}}|D\varphi|<\varepsilon\quad\text{and}\quad
{\sup}_{T_{\delta_2}}|D^2\varphi|<\varepsilon.$$
Because for any fixed $x_2$ we have
$$\lim_{x_1\to\pi/2}\varphi=\lim_{x_1\to\pi/2}|D\varphi|=0,$$
we get
\begin{eqnarray*}
|\varphi_{x_2}(x_1,x_2)|&=&\Big|-\int_{x_1}^{\tfrac{\pi}{2}}\varphi_{x_2x_1}(x_1,x_2)dx_1\Big|\le\big(\pi/2-x_1\big)\left|{\sup}_{T_{\delta_2}}\varphi_{x_1x_2}\right|\\
&\le&\big(\pi/2-x_1\big)\varepsilon.
\end{eqnarray*}
Hence, for any $i\ge i_2$, from equation \eqref{finalh} we see
${\sup}_{\Lambda_{1i}}|h|<\varepsilon.$
Because of the symmetry we immediately get that
${\sup}_{\Lambda_{2i}}|h|<\varepsilon.$
On the other hand, recall that the strips $R_i$ and $L_i$ are getting $C^1$-close to 
the corresponding grim reaper cylinders. Hence, there exists an index $i_3\ge i_2$ such that for $i\ge i_3$ we
can represent
$$R_i\cap\big\{(x_1,x_2,x_3)\in\real{3}:x_3\le i_3\big\} $$
as the graph over a grim reaper cylinder of a smooth function $\varphi_i$ defined in a slab of the form
$G_{\delta_3i}:=(-\pi/2+\delta_3,\pi/2-\delta_3)\times (m_{1i},m_{2i})$, where here $\delta_3$ depends
only on $i_3$, satisfying the properties
$${\sup}_{G_{\delta_3i}}|\varphi_i|<\varepsilon\quad\text{and}\quad
{\sup}_{G_{\delta_3i}}|D\varphi_i|<\varepsilon.$$
Exactly the same estimate can be obtained along the strips $L_i$.
Note that in this case the $x_1$-coordinate is not tending to $\pm\pi/2$ and so
$\cos x_1$ is bounded from below by a positive number.
Going now back to equation \eqref{finalh} we obtain that for $i\ge i_3$ we have
$${\sup}_{\Lambda_{4i}}|h|<\varepsilon\quad\text{and}\quad
{\sup}_{\Lambda_{3i}}|h|<\varepsilon.$$
Therefore $h|_{\partial\Lambda_i}$ becomes arbitrary small as $i$ tends to infinity. This completes the proof of the claim.

From Claim \ref{cl:h_at_infinity}, there exists an interior point where $h$ attains a local maximum or a local minimum. From the strong maximum principle
of Hopf we deduce that $h$ must be identically zero. Consequently, $\xi_2=0$ and thus
$\operatorname{e}_2=(0,1,0)$ is a tangent vector of $M$. Differentiating the equation $h=0$, we deduce that $A(\operatorname{e}_2)=0$.
Thus, $\det A=0$ and so $|A|^2=H^2$. But then, from \cite[Theorem B]{fra14}, we deduce that $M$ should be a grim reaper cylinder.

{\bf Acknowledgments:} {\small  The authors would like to thank Brian White and Antonio Ros for many stimulating and helpful conversations.
Moreover, the authors would also like to thank Oliver Schn\"urer and Leonor Ferrer for plenty of useful discussions.
Finally we would like to thank the referee for the valuable comments on the content of the manuscript and the suggestions for improving the paper.}

\begin{bibdiv}
\begin{biblist}

\bib{alexandrov}{article}{
   author={Alexandrov, A.D.},
   title={Uniqueness theorems for surfaces in the large},
   journal={Vestnik Leningrad Univ. Math.},
   volume={11},
   date={1956},
   pages={5--17},
}

\bib{choi}{article}{
   author={Choi, H.I.},
   author={Schoen, R.},
   title={The space of minimal embeddings of a surface into a
   three-dimensional manifold of positive Ricci curvature},
   journal={Invent. Math.},
   volume={81},
   date={1985},
   pages={387--394},
}

\bib{davila}{article}{
   author={D\'avila, J.},
   author={del Pino, M.},
   author={Nguyen, X.-H.},
   title={Finite topology self-translating surfaces for the mean curvature flow in $\real{3}$},
   journal={ arXiv:1501.03867},
   date={2015},
   pages={1-45},
}

\bib{ilmanen}{article}{
   author={Ilmanen, T.},
   title={Elliptic regularization and partial regularity for motion by mean
   curvature},
   journal={Mem. Amer. Math. Soc.},
   volume={108},
   date={1994},
   number={520},
   pages={x+90},
}


\bib{fra14}{article}{
   author={Mart{\'{\i}}n, F.},
   author={Savas-Halilaj, A.},
   author={Smoczyk, K.},
   title={On the topology of translating solitons of the mean curvature
   flow},
   journal={Calc. Var. Partial Differential Equations},
   volume={54},
   date={2015},
   pages={2853--2882},
}
\bib{nguyen1}{article}{
   author={Nguyen, X.-H.},
   title={Doubly periodic self-translating surfaces for the mean curvature
   flow},
   journal={Geom. Dedicata},
   volume={174},
   date={2015},
   pages={177--185},
}		

\bib{nguyen2}{article}{
   author={Nguyen, X.-H.},
   title={Complete embedded self-translating surfaces under mean curvature
   flow},
   journal={J. Geom. Anal.},
   volume={23},
   date={2013},
   pages={1379--1426},
}

\bib{nguyen3}{article}{
   author={Nguyen, X.-H.},
   title={Translating tridents},
   journal={Comm. Partial Differential Equations},
   volume={34},
   date={2009},
   pages={257--280},
}
		
\bib{perez}{article}{
   author={P{\'e}rez, J.},
   author={Ros, A.},
   title={Properly embedded minimal surfaces with finite total curvature},
   conference={
      title={The global theory of minimal surfaces in flat spaces (Martina
      Franca, 1999)},
   },
   book={
      series={Lecture Notes in Math.},
      volume={1775},
      publisher={Springer, Berlin},
   },
   date={2002},
   pages={15--66},
}

\bib{smith}{article}{
   author={Smith, G.},
   title={On complete embedded translating solitons of the mean curvature
   flow that are of finite genus },
   journal={arXiv:1501.04149},
   date={2015},
   pages={1-60},
}

\bib{schoen}{article}{
   author={Schoen, R.M.},
   title={Uniqueness, symmetry, and embeddedness of minimal surfaces},
   journal={J. Differential Geom.},
   volume={18},
   date={1983},
   pages={791--809},
}

\bib{whi02}{inproceedings}{
   title={Evolution of curves and surfaces by mean curvature},
   author={White, B.},
   booktitle={Procee\-dings of the ICM},
   volume={1},
   date={2002},
   pages={525-538}
}

\bib{whi12}{article}{
   author={White, B.},
   title={Controlling area blow-up in minimal or bounded mean curvature varieties},
   journal={J. Differential Geom.},
   volume={to appear},
   date={2015},
   pages={1-30},
}
   
\bib{whi15}{article}{
   author={White, B.},
   title={On the compactness theorem for embedded minimal surfaces in $3$-manifolds with locally bounded area and genus},
   journal={arXiv:1503.02190v1},
   date={2015},
   pages={1-13},
}

\end{biblist}
\end{bibdiv}
\end{document}